\title{Coarse homology theories and finite decomposition complexity}
\date{\today}
\author{
Ulrich Bunke\thanks{Fakult{\"a}t f{\"u}r Mathematik,
Universit{\"a}t Regensburg,
93040 Regensburg,
Germany\newline
ulrich.bunke@mathematik.uni-regensburg.de} 
\and Alexander Engel\thanks{Fakult{\"a}t f{\"u}r Mathematik,
Universit{\"a}t Regensburg,
93040 Regensburg,
Germany\newline
alexander.engel@mathematik.uni-regensburg.de
	}
\and
Daniel Kasprowski\thanks{
	Rheinische Friedrich-Wilhelms-Universit\"at Bonn, Mathematisches Institut, Endenicher Allee 60,\newline 53115 Bonn, Germany\newline
	kasprowski@uni-bonn.de
}
\and
Christoph Winges\thanks{
		Rheinische Friedrich-Wilhelms-Universit\"at Bonn, Mathematisches Institut, Endenicher Allee 60,\newline 53115 Bonn, Germany\newline
	winges@math.uni-bonn.de}
}
\numberwithin{equation}{section}
\newtheorem{theorem}{Theorem}[section] 
\newtheorem{prop}[theorem]{Proposition}
\newtheorem{lem}[theorem]{Lemma}
\newtheorem{kor}[theorem]{Corollary}
\theoremstyle{remark}
\theoremstyle{definition}
\newtheorem{ddd-alt}[theorem]{Definition}
\newtheorem{ex-alt}[theorem]{Example}
\newtheorem{rem-alt}[theorem]{Remark}
\newenvironment{ddd}    
{%
	\pushQED{\qed}\begin{ddd-alt}}
	{\popQED\end{ddd-alt}}
\newenvironment{ex}    
{%
	\pushQED{\qed}\begin{ex-alt}}
	{\popQED\end{ex-alt}}
\newenvironment{rem}    
{%
	\pushQED{\qed}\begin{rem-alt}}
	{\popQED\end{rem-alt}}
\newcommand{\semi}{\mathrm{semi}}
\newcommand{\free}{\mathrm{free}}
\newcommand{\sub}{\mathrm{sub}}
\newcommand{\VP}{{\mathcal{VS}}}
\newcommand{\FP}{{\mathcal{FS}}}
\newcommand{\SB}{{\mathcal{B}}}
\newcommand{\UBC}{\mathbf{UBC}}
\DeclareMathOperator{\Yo}{Yo}
\newcommand{\BC}{\mathbf{BornCoarse}}
\DeclareMathOperator{\Cofib}{Cofib}
\newcommand{\bA}{{\mathbf{A}}}
\newcommand{\cO}{{\mathcal{O}}}
\newcommand{\cU}{{\mathcal{U}}}
\newcommand{\cY}{{\mathcal{Y}}}
\newcommand{\cD}{{\mathcal{D}}}
 \newcommand{\Cat}{{\mathbf{Cat}}}
\newcommand{\IN}{\mathbb{N}}
\begin{document}
\maketitle

\begin{abstract}
{Using the language of coarse homology theories, we provide an axiomatic account of vanishing results for the fibres of forget-control maps associated to spaces with equivariant finite decomposition complexity.}
\end{abstract}

\tableofcontents
 
\section{Introduction}
  
For a group $G$ we consider the category $G\BC$ of $G$-bornological coarse spaces \cite[Sec. 2]{equivcoarse}. In \cite[Sec. 3 \& 4]{equivcoarse} we introduced the notion of an equivariant coarse homology theory and constructed the universal equivariant coarse homology theory \[\Yo^{s}:G\BC\to G\Sp\cX\]
whose target is the presentable stable $\infty$-category of equivariant coarse motivic spectra. 

To every $G$-bornological coarse space  $X$ one can functorially associate the motivic forget-control map  
\begin{equation}\label{jknkefnhvievevve}
\beta_{X}:  F^{\infty}(X)\to \Sigma F^{0}(X)
\end{equation}
which is a morphism  in $G\Sp\cX$ (see \cite[Def. 11.10]{equivcoarse} and  \eqref{wreofijherofefwefef} below).
 If $E$ is a $\bC$-valued equivariant coarse homology theory, then it factors in an essentially unique way over $G\Sp\cX$ and we get the induced
forget-control map
\[E(\beta_{X}):E(F^{\infty}(X))\to \Sigma E(F^{0}(X))\] for $E$.
The goal of the present paper is to show the following theorem:
\begin{theorem}\label{grkrgoi4334r3f}
Assume:
\begin{enumerate}
\item $E$ is weakly additive (see \cref{fgiw9fuew098fewfewfwf}).
\item $E$ admits weak  transfers (see \cref{rgioggergergergerg}).
\item $\bC$ is compactly generated.
\item $X$ has $G$-finite decomposition complexity ($G$-FDC)   (see \cref{wfiowefwefewfewf}).
\item $G$ acts discontinuously on $X$ (see \cref{weffoiwfewwef423453}).
\end{enumerate}
Then the forget-control map  for $E$ is an equivalence
\[E(\beta_{X}):E(F^{\infty}(X))\to  \Sigma E(F^{0}(X))\ .\]
\end{theorem}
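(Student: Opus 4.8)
The plan is to isolate the failure of $E(\beta_X)$ to be an equivalence as the value of an auxiliary functor and to prove that this functor vanishes on all spaces of $G$-FDC by transfinite induction on the decomposition complexity. Set $\mathcal{F}(X) := \Fib\bigl(E(\beta_X)\colon E(F^\infty(X)) \to \Sigma E(F^0(X))\bigr)$; the assertion of the theorem is that $\mathcal{F}(X)\simeq 0$. The first task is to record that the functor $\mathcal{F}\colon G\BC\to\bC$ inherits from $E$ the structure we need: it is weakly additive (hypothesis 1) and admits weak transfers (hypothesis 2), and, being assembled from $E$ together with the cone functors $F^0$ and $F^\infty$, it is u-continuous and sends coarse excision squares to cartesian squares. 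Compact generation of $\bC$ (hypothesis 3) ensures both that $\mathcal{F}(X)\simeq 0$ can be tested against compact generators and that the filtered colimits appearing below commute with $\Fib$.

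\textbf{Base case.} For bounded $X$ one shows $\mathcal{F}(X)\simeq 0$ directly: a bounded $G$-bornological coarse space is $G$-coarsely equivalent to a model whose associated cone is flasque, and coarse homology theories vanish on flasque spaces, so the forget-control map is an equivalence. This is the only point at which the explicit definition of $F^0$ and $F^\infty$ through the cone is used.

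\textbf{Inductive step.} Assume $X$ has decomposition complexity $\alpha$ and that $\mathcal{F}$ vanishes on all spaces of complexity $<\alpha$. By the definition of $G$-FDC (hypothesis 4), for every entourage $U$ there is a $G$-invariant decomposition $X=X_0\cup X_1$ in which each $X_i$ is a $U$-separated $G$-invariant free union $X_i\simeq\bigsqcup_j X_{i,j}$ of spaces $X_{i,j}$ of complexity $<\alpha$; the intersection $X_0\cap X_1$ is again a free union of subspaces of such pieces, hence of complexity $<\alpha$. Two mechanisms combine. First, the $U$-separation makes $X_i$ a genuine coarse free union, so weak additivity identifies $\mathcal{F}(X_i)$ with a product of the vanishing objects $\mathcal{F}(X_{i,j})$; here the discontinuity of the action (hypothesis 5) guarantees finite stabilizers on bounded pieces, and the weak transfers are precisely what make this identification valid $G$-equivariantly when $G$ permutes the members $X_{i,j}$. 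Second, $(X_0,X_1)$ is a coarse excision square, so applying $\mathcal{F}$ yields a fibre sequence expressing $\mathcal{F}(X)$ through $\mathcal{F}(X_0)$, $\mathcal{F}(X_1)$ and $\mathcal{F}(X_0\cap X_1)$, all of which vanish. Finally, u-continuity passes these scale-$U$ statements to the colimit over all entourages, giving $\mathcal{F}(X)\simeq 0$.

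\textbf{Main obstacle.} The crux lies in the inductive step and, within it, in the interaction of the infinitary ingredients. The FDC data are supplied only scale by scale, whereas $\mathcal{F}(X)$ is a colimit over the poset of entourages; one must show that the scale-wise vanishing produced by Mayer--Vietoris and the inductive hypothesis survives this colimit. This requires combining u-continuity with compact generation of $\bC$ to interchange the colimit with $\Fib$, while simultaneously using the weak transfers to keep the identification of $\mathcal{F}(X_i)$ with the product over the family valid $G$-equivariantly and uniformly in $U$. Reconciling the quantifier structure ``for every $U$ there exists a decomposition'' with a single colimit statement, without sacrificing equivariance, is the delicate technical heart of the proof.
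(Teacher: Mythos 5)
Your high-level skeleton (reduce to vanishing of a fibre functor, induct over the decomposition hierarchy via Mayer--Vietoris, additivity, u-continuity and a base case) agrees with the paper, but there is a genuine gap at exactly the point you label the ``main obstacle'': you name the problem of reconciling ``for every entourage there exists a decomposition'' with the colimit over all entourages, but you do not supply a mechanism that solves it, and the tools you invoke cannot. Interchanging the filtered colimit with $\Fib$ is automatic in any cocomplete stable $\infty$-category (fibres are shifted cofibres, and cofibres commute with all colimits), so compact generation buys nothing there. The real difficulty is a regress: a map $K\to E(F(X))$ from a compact object factors through a stage $E(\cO(P_{S_{0}}(X)))$; running Mayer--Vietoris at scale $S_{0}$ and applying the inductive hypothesis to the pieces kills the boundary term only after enlarging to some entourage $S'$ depending on $K$ and on the chosen decomposition; but the old pieces are only $S_{0}$-disjoint, not $S'$-disjoint, so at scale $S'$ the Rips complex acquires simplices joining different pieces, a new decomposition must be chosen, and the process repeats with no a priori termination. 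The paper's solution (following Ramras--Tessera--Yu and Kasprowski) is to work with sequences of spaces: one forms the objects $\bC^{\infty}((Y_{n}))$ of \eqref{gerg43r4r3}, which allow an arbitrary sequence of entourages $(S_{n})$ and discard finitely many terms, proves closure under decomposition for sequences (\cref{thm:dec}), and then --- this is precisely where the weak transfers enter --- uses the diagonal transfer $\tr_{\nat}:E\to E^{\nat}$ together with $p_{j}\circ\tr_{\nat}\simeq\id$ to pass from phantomness of $\bC^{\infty}(\underline{X})$ back to phantomness of $\colim_{S}E(\cO(P_{S}(X)))$ (\cref{lem:constantseqvanishing-componentvanishing}). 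Your proposal assigns the transfers a different, incorrect role (equivariance of a product decomposition when $G$ permutes the pieces); this is unnecessary, since in \cref{def:strongdec} the families are equivariant and their sub-unions are single $G$-bornological coarse spaces. Without the diagonal-transfer step your induction cannot close.

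Two further points. First, your base case is misidentified: the generating class for $G$-FDC consists of \emph{semi-bounded} spaces (\cref{efiuweofewfewf}), not bounded ones, and for these the cone over the Rips complex is not flasque on the nose; one must first contract $P_{S}(X)$ equivariantly onto $\pi_{0}(X)$ using barycenters of orbits, and it is exactly here --- not in the inductive step --- that discontinuity of the action is needed, to know that $Gx\cap X_{i}$ is finite so that it spans a simplex whose barycenter is fixed by the stabilizer (\cref{prop:semiboundedisvanishing}). Second, the pieces $\coprod^{\sub}_{i}U_{i}$ occurring in a decomposition are not free unions of $G$-bornological coarse spaces (their bornology is induced from $X$, and the relevant Mayer--Vietoris argument moreover forces the semi-free union of cones, cf.\ \eqref{eq:decompositionpushout}), so weak additivity does not apply to them verbatim; and in any case it yields only a phantom monomorphism into the product, not an identification with it. The paper deals with both issues through the comparisons between sub, semi-free and free unions (\cref{lem:metricsonrips}, \cref{lem:comparison}, \cref{ef988u9u89423r}) and by testing all vanishing statements against compact objects.
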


In the case that $X$ is the group $G$ with the canonical bornological coarse structure and {the} action by left multiplication (we often denote this object by $G_{can,min}$), the forget-control map is closely related to the assembly map which appears in Farrell--Jones type conjectures, see \cite[Sec. 11.2]{equivcoarse} for details. 
 \cref{grkrgoi4334r3f}  will be used in \cite{desc} to obtain split-injectivity results for the assembly map. 

\cref{grkrgoi4334r3f} applies to equivariant coarse algebraic $K$-homology $K\bA\cX^{G}$ with coefficients in an additive category $\bA$ with a strict $G$-action, see \cref{wrgiuwrg9gergerger}. 
In this case we {will reprove} Novikov-type results for several classes of groups including a large class of linear groups {in \cite{desc}}.

In the following we give a more detailed description of the result. In 
\cite[Sec. 9]{equivcoarse} we introduced the category of $G$-uniform bornological coarse space
$G\UBC$. This category is related with $G\BC$ by a  forgetful functor 
\[\cF_{\cU}:G\UBC\to G\BC\] (which forgets the uniform structure) and a cone functor
\[\cO:G\UBC\to G\BC\ .\]   These functors are connected by   a natural transformation  
$ \cF_{\cU}\to \cO$ whose motivic   cofibre
\[\cO^{\infty}:=\Cofib(\Yo^{s}\circ \cF\to \Yo^{s}\circ \cO):G\UBC\to G\Sp\cX\]
is called   the germs-at-$\infty$-functor. 
By construction we have the cone fibre sequence  \begin{equation}\label{vrvervevve}
\Yo^{s}\circ \cF_{\cU}\to \Yo^{s}\circ \cO\to \cO^{\infty}\xrightarrow{\partial} \Sigma \Yo^{s}\circ \cF_{\cU}
\end{equation}
of functors from $G\UBC$ to $G\Sp\cX$.

The Rips complex construction provides a functor in the other direction from $G\BC$ to $G\UBC$. 
We let $G\BC^{\cC}$ be the category of pairs $(X,U)$ of a $G$-bornological space
$X$ and an invariant entourage $U$ of $X$.
A morphism $(X,U)\to (X^{\prime},U^{\prime})$ in $G\BC^{\cC}$ is a morphism $f:X\to X^{\prime}$ in $G\BC$ such that
$(f\times f)(U)\subseteq U^{\prime}$. 
Formally, the category $G\BC^{\cC}$ is the Grothendieck construction of the functor $G\BC\to \Cat$ which sends a $G$-bornological coarse space $(X,\cC,\cB)$ to the poset   $\cC^{G}$ of invariant entourages.
In \cite[Sec. 11.1]{equivcoarse} or \cref{def:Rips} we introduce the Rips complex functor
\[P:G\BC^{\cC}\to G\UBC\ , \quad (X,U)\mapsto P_{U}(X)\ .\]
We form the left Kan-extension of the precomposition of the cone sequence \eqref{vrvervevve} with $P$ along the forgetful functor
\[G\BC^{\cC}\to G\BC\ , \quad (X,U)\mapsto X\ .\] The result is a fibre sequence  \begin{equation} \label{wreofijherofefwefef}
F^{0}\to F\to F^{\infty}\xrightarrow{\beta} \Sigma F^{0}
\end{equation}
of functors from $G\BC$ to $ G\Sp\cX$. If we apply this fibre sequence to a bornological coarse space $X$, then we get a fibre sequence in $G\Sp\cX$ whose third morphism is the motivic forget-control map \eqref{jknkefnhvievevve}.

Let $X$ be a $G$-bornological coarse space and $E$ be an equivariant coarse homology theory.
In view of the fibre sequence \eqref{wreofijherofefwefef}, \cref{grkrgoi4334r3f} is an immediate consequence of the following theorem:
\begin{theorem}[\cref{ergijoergregregergeg}]\label{grkrgoi4334r3f1}
Assume:
\begin{enumerate}
\item $E$ is weakly additive.
\item $E$ admits weak  transfers.
\item $\bC$ is compactly generated.
\item $X$ has $G$-FDC.
\item $G$ acts discontinuously on $X$.
\end{enumerate}
Then $E(F (X)) \simeq 0$.
\end{theorem}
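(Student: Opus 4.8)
The plan is to make $F(X)$ explicit and then argue by induction along the decomposition structure witnessing $G$-FDC. Since $F=\lKan(\Yo^{s}\circ \cO\circ P)$ is the left Kan extension along the forgetful functor $G\BC^{\cC}\to G\BC$, a cofinality argument identifies its value at $X$ with the filtered colimit
\[
F(X)\simeq \operatorname*{colim}_{U\in \cC^{G}}\Yo^{s}(\cO(P_{U}(X)))
\]
over the poset of invariant entourages of $X$. As $E$ factors through $\Yo^{s}$ via a colimit-preserving functor on $G\Sp\cX$, it suffices to prove $\operatorname*{colim}_{U}E(\cO(P_{U}(X)))\simeq 0$. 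Here assumption (3), that $\bC$ is compactly generated, will be used to test this vanishing by mapping compact generators into the colimit, thereby reducing the whole problem to cofinality statements about filtered colimits of mapping spectra.

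Two formal inputs drive the induction. First, coarse excision for $E$ provides Mayer--Vietoris sequences: if an invariant entourage exhibits $X$ as a coarsely excisive union $X=X_{0}\cup X_{1}$ which the Rips scale $U$ is fine enough to detect, there is a fibre sequence relating $E(\cO(P_{U}(X)))$ to the analogous terms for $X_{0}$, $X_{1}$ and $X_{0}\cap X_{1}$. Second, weak additivity (assumption (1), \cref{fgiw9fuew098fewfewfwf}) computes $E(\cO(P_{U}(-)))$ on a disjoint union of $U$-far-apart pieces as the product over those pieces, while weak transfers (assumption (2), \cref{rgioggergergergerg}) together with discontinuity of the action (assumption (5), \cref{weffoiwfewwef423453}) supply the wrong-way maps needed to organise the $G$-orbits of pieces with their finite stabilisers; discontinuity is precisely what guarantees that these stabilisers are finite and that the decompositions can be chosen $G$-invariantly.

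With these tools I would run a well-founded induction on the complexity witnessing $G$-FDC (\cref{wfiowefwefewfewf}). For the base case of $G$-bounded families the Rips complex stabilises and $\cO(P_{U}(X))$ carries an equivariant flasqueness structure coming from the cone direction, so $E(\cO(P_{U}(X)))\simeq 0$ by the flasqueness axiom, uniformly in $U$. For the inductive step, fix $U$ and use $G$-FDC to decompose $X$, using a separation coarser than $U$, as $X=X_{0}\cup X_{1}$ with each $X_{i}$ a disjoint union of pieces $X_{i,j}$ from a strictly lower-complexity family; since the pieces are $U$-far apart, $P_{U}(X_{i})=\bigsqcup_{j}P_{U}(X_{i,j})$. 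Coarse excision then reduces the claim for $X$ to the claims for $X_{0}$, $X_{1}$ and $X_{0}\cap X_{1}$; weak additivity together with the transfers expresses each $E(\cO(P_{U}(X_{i})))$ through the individual pieces; and the induction hypothesis makes these contributions vanish after passing to the colimit in $U$.

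The step I expect to be the main obstacle is the interaction between the $U$-dependence of the FDC decomposition and the colimit over $U$: because the decomposition must be refined as $U$ grows, the pieces entering the inductive step themselves vary with $U$, and the argument has to be arranged so that the induction hypothesis applies compatibly across the whole poset $\cC^{G}$. This forces one to phrase the vanishing not for a single space but for the entire family of pieces at once, and to verify that the additivity equivalences and transfer maps are natural enough to survive the passage to the colimit --- which is exactly the point at which compact generation is invoked to replace the colimit by a cofinal check. A secondary but genuine difficulty is the equivariant bookkeeping: ensuring the chosen decompositions are $G$-invariant and that the transfers correctly account for the finite stabilisers guaranteed by discontinuity.
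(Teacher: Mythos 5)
Your high-level architecture matches the paper's: both arguments run over the inductive definition of $G$-FDC, showing that a class of ``vanishing'' spaces contains the semi-bounded ones (base case, where the stabilized Rips complex is equivariantly contractible onto something whose cone is flasque) and is closed under decomposition, with compact generation of $\bC$ invoked only to upgrade ``phantom object'' to ``zero''. You also correctly locate the central obstacle: the FDC decomposition depends on the entourage $T$, so the pieces vary as one runs through the colimit over $\cC^{G}$.

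However, the proposal stops exactly where the real work begins: it names this obstacle and says the vanishing must be ``phrased for the entire family of pieces at once'', without supplying the mechanism that makes this possible. The paper's mechanism is the formalism of sequences: one introduces $E$-vanishing \emph{sequences} $(X_{n})$ of $G$-bornological coarse spaces (\cref{roigrgegg}), in which each index carries its own independently varying entourage $S_{n}$, together with the semi-free union $\coprod^{\semi}$ (\cref{regeriogregregerg}) --- the free union would not do, because the thickenings $\big(\coprod_{N\le n}\cO(P^{X_{n}}_{S_{n}}(S_{n}^{k}[U_{n}]))\big)_{k}$ fail to form a big family in the free union, so the Mayer--Vietoris/excision step would be unavailable. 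Closure under decomposition is proved at the level of sequences (\cref{thm:dec}) and only then descended to a single space. Crucially, the descent (\cref{lem:constantseqvanishing-componentvanishing}) is where weak transfers actually enter: $\tr_{\nat}\colon E\to E^{\nat}$ is the diagonal into the free union of countably many copies of the \emph{same} space, and the relation $p_{j}\circ\tr_{\nat}\simeq\id_{E}$ lets one deduce that $\colim_{S\in\cC^{G}}E(\cO(P_{S_{Y}}(Y)))$ is phantom from phantomness of $\bC^{\infty}(\underline{X})$, whose coordinates carry independent entourages. This is not how your sketch uses transfers: you cast them as wrong-way maps organizing $G$-orbits of pieces with finite stabilizers, which is not their role; you likewise misplace discontinuity, which in the paper is used only in the semi-bounded base case (\cref{prop:semiboundedisvanishing}) to guarantee that $Gx_{i}$ meets each coarse component in a finite set, so that the spanned simplex has a stabilizer-fixed barycenter and $P_{S}(X)$ becomes equivariantly homotopy equivalent to the discrete space $\pi_{0}(X)$. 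Finally, weak additivity does not ``compute'' $E$ of a far-apart union as a product: it only asserts that the comparison map is a phantom monomorphism (\cref{fgiw9fuew098fewfewfwf}), so every vanishing claim must be tested against morphisms out of a fixed compact object $K$; treating it as an equivalence is assuming strong additivity, which the theorem deliberately avoids. So the skeleton is right, but the inductive step as proposed would not close without the sequence/semi-free-union formalism and the diagonal-transfer descent argument.
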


Note that the pointwise formula for the left Kan-extension gives
\[E(F(X))\simeq \colim_{S\in \cC^{G}}  E(\cO(P_{U}(X)))\]
whose right-hand side explicitly appears in  \cref{ergijoergregregergeg}. In order to deduce \cref{grkrgoi4334r3f1} 
from \cref{ergijoergregregergeg} furthermore note that the condition that $\bC$ is compactly generated 
implies that phantom objects (\cref{def:phantom}) are trivial. 
  
The notion of finite decomposition complexity was introduced by Guentner, Tessera and Yu \cite{GTY} as a generalization of finite asymptotic dimension. It was used in \cite{Guentner:2010aa} to prove instances of the stable Borel conjecture. Subsequently, Ramras, Tessera and Yu \cite{Ramras:2011aa} employed this notion to prove instances of the $K$-theoretic Novikov conjecture. Kasprowski \cite{kasp} introduced $G$-FDC as an equivariant version of finite decomposition complexity and proved that {if $X$  has  $G$-FDC, then} the forget-control map induces an equivalence
$K{\bA}\cX^{G}(\beta_{X})$
in equivariant coarse algebraic $K$-homology $K{\bA}\cX^{G}$ associated to an additive category~${\bA}$. 

The proofs in \cite{Ramras:2011aa} and \cite{kasp}  use properties of the coarse algebraic $K$-homology functor  $K{\bA}\cX$ which go beyond the four general properties of an equivariant coarse homology listed in \cref{roigwregergerger}.   The main new
contribution of the present paper is the observation 
  that in addition to the  axioms of a coarse homology theory we just need the  following  two additional properties:
\begin{enumerate}
\item existence  of    weak transfers (see \cref{rgioggergergergerg}). \item weak additivity (see \cref{fgiw9fuew098fewfewfwf}).
\end{enumerate}

We show that equivariant coarse ordinary homology $H\cX^{G}$ and equivariant coarse algebraic $K$-homology $K{\bA}\cX^{G}$ associated to an additive category $\bA$ with $G$-action both admit weak transfers and are  weakly additive. Therefore, \cref{grkrgoi4334r3f} applies to these examples and our main result directly implies and generalizes the results in \cite{Guentner:2010aa} and \cite{kasp}.  

\subsection*{Acknowledgements} U.B.~and A.E.~were supported by the SFB 1085 \emph{Higher Invariants} funded by the Deutsche Forschungsgemeinschaft DFG, and A.E.~was further supported by the Research Fellowship EN 1163/1-1 \emph{Mapping Analysis to Homology} of the DFG. D.K.~and C.W.~acknowledge support by the Max Planck Society. Parts of the work presented here were obtained during the Junior Hausdorff Trimester Program \emph{Topology} at the Hausdorff Research Institute for Mathematics (HIM) in Bonn. We also thank Mark Ullmann for helpful discussions.
  
 \section{Equivariant coarse homology theories}
 
 \subsection{Weak transfers}
 We consider a group $G$. In  \cite[Sec.~2]{equivcoarse} we introduced the  category $G\BC$ of $G$-bornological coarse spaces and equivariant proper and controlled maps. The notion of the free union of a family 
  $(X_{i})_{i\in I}$ of $G$-bornological coarse spaces plays an important role in the present paper. 

\begin{ddd}
We define the \emph{free union}
$\coprod_{i\in I}^{\free}X_{i}$ in $G\BC$ as follows:
\begin{enumerate}\item The underlying $G$-set of  $\coprod_{i\in I}^{\free}X_{i}$ is the disjoint union $\coprod_{i\in I}X_{i}$. \item 
A set $B\subseteq \coprod_{i\in I}^{\free }X_{i}$ is bounded if and only if:  \begin{enumerate}\item  The set $|\{i\in I\:|\: B\cap X_{i}\not=\emptyset\}|$ is finite.  \item 
 The set $B\cap X_{i}$ is  bounded (as a subset of $X_{i}$) for all $i$ in $I$.\end{enumerate} \item The coarse structure of $\coprod^{\free}_{i\in I} X_{i}$ is generated by the entourages
$\bigsqcup_{i\in I} U_{i}$ for all families $(U_{i})_{i\in I}$, where $U_{i}$ is an entourage of $X_{i}$. \qedhere
\end{enumerate}
\end{ddd}

 \begin{rem}
Let $(X_{i})_{i\in I}$ be a family of $G$-bornological coarse spaces.
For every $j$ in $I$ we have a canonical morphism $X_{j}\to \coprod_{i\in I}^{\free}X_{i}$. These morphisms induce a morphism \[\coprod_{i\in I} X_{i}\to \coprod_{i\in I}^{\free}X_{i}\ ,\] where
  $\coprod_{i}X_{i}$ denotes the coproduct of the family $(X_{i})_{i\in I}$ in the category $G\BC$.
  The coproduct can be realized such that this morphism is the identity on the underlying sets.
 In general, it is not an isomorphism since the coarse structure on the free union is larger than the coarse structure on the categorical coproduct in $G\BC$, while the bornology of the coproduct is larger than that of the free union.  \end{rem}
 
 Let $\bC$ be a cocomplete stable $\infty$-category and 
 \[E:G\BC\to \bC\] be a functor. The following definition is taken from \cite[Def.~3.10]{equivcoarse}.
\begin{ddd} \label{roigwregergerger}
The functor $E$ is an \emph{equivariant $\bC$-valued coarse homology theory}  if it satisfies the following properties:
	\begin{enumerate}
		\item $E$ is excisive for equivariant complementary pairs.
		\item $E$ is coarsely invariant.
		\item $E$ vanishes on flasque $G$-bornological coarse spaces.
		\item $E$ is {$u$-}continuous.\qedhere
	\end{enumerate}
\end{ddd}

We consider a family $(X_{i})_{i\in I}$ of $G$-bornological coarse spaces and a point $j$ in $I$. We set $I_{j}:=I\setminus \{j\}$.
Then $(X_{j}, \coprod^{\free}_{i\in I_{j}}X_{i})$ is a coarsely excisive decomposition (see \cite[Def.~4.12]{equivcoarse}) of $\coprod^{\free}_{i\in I}X_{i}$.
Since $E$ satisfies excision for coarsely excisive decompositions   \cite[Def. 4.13]{equivcoarse}   we can define a projection
\begin{equation}\label{brtboijogr4g45g5}
p_{j}:E( \coprod^{\free}_{i\in I}X_{i})\simeq E(X_{j})\oplus E( \coprod^{\free}_{i\in I_{j}}X_{i})\to E(X_{j})\ .
\end{equation} 

Let $I$ be a set and $E:G\BC\to \bC$ be an equivariant coarse homology theory.
Then we  define a functor
\[E^{I}:G\BC\to \bC\ , \quad X\mapsto E( \coprod^{\free}_{i\in I}X)\ .\]
For every $j$  in $I$ the projection \eqref{brtboijogr4g45g5}  then  provides a  natural transformation of functors
\[p_{j}:E^{I}\to E \ .\]

\begin{ddd}\label{rgioggergergergerg}
$E$ has {\em weak transfers} for $I$ if there exists a natural transformation
\[\tr_{I}:E\to E^{I}\] such that
\begin{equation}\label{bvtrb4g4tb}
p_{j}\circ \tr_{I}\simeq \id_{E}
\end{equation} for every $j$ in $I$.
\end{ddd}

{\begin{ex}\label{rgeioreggergeg}
	Recall from \cite[Def.~8.8]{equivcoarse}, that equivariant coarse algebraic $K$-homology $K\bA\cX^{G}$ was constructed by assigning to a $G$-bornological coarse space $X$ an additive category $\bV^G_\bA(X)$ of $X$-controlled objects and taking $K$-theory of this category. Let \[(\widehat{K\bA\cX^G})^I:= K(\prod_I\bV^G_\bA(X))\ .\] Sending an object $M$ of $\bV^G_\bA(\coprod_{I}^\free(X))$ to the sequence $(M|_{X_i})_{i\in I}$, where $X_i$ is the $i$th copy of $X$ in the free union, yields a natural transformation \[\tau\colon (K\bA\cX^G)^I\to (\widehat{K\bA\cX^G})^I\ .\] From the definition of the free union, it follows that $\tau$ is a natural equivalence. We then define
	\[\tr_I\colon K\bA\cX^G\xrightarrow{K(\Delta)}(\widehat{K\bA\cX^G})^I\xrightarrow{\tau^{-1}}(K\bA\cX^{G})^I\ ,\]
	where $\Delta$ is the diagonal. Property \eqref{bvtrb4g4tb} for the weak transfer follows from \cite[Rem.~8.16]{equivcoarse}. 
\end{ex}}

{\begin{rem}
	\label{rem:transfers}
	In \cite{coarsetrans} we introduce the notion of a coarse homology theory with transfers. More precisely, we extend the category $G\BC$ to an $\infty$-category $G\BC^Q_{tr}$ whose additional morphisms encode a more general kind of transfers. In \cite{coarsetrans} we show that every coarse homology theory with transfers in particular has weak transfers and we construct extensions of equivariant coarse ordinary homology $H\cX^{G}$ and equivariant coarse algebraic $K$-homology $K\bA\cX^{G}$ to coarse homology theories with transfers.
\end{rem}}
 
 \subsection{Weak additivity}
 
Let $E:G\BC_{tr}\to \bC$ be an  equivariant  $\bC$-valued  coarse homology theory. The following Definition is taken from \cite[Def. 3.12]{equivcoarse}.
\begin{ddd}\label{sdj2398sd}
$E$ is called \emph{strongly additive} if for every family $(X_{i})_{i\in I}$ of $G$-bornological coarse spaces  the morphism
\begin{equation}
\label{iuztrewq}
(p_{j})_{j\in I}:E(\coprod_{i\in I}^{\free}X_{i})\to \prod_{j\in I} E(X_{j})
\end{equation}
     is an equivalence.
 \end{ddd}
 
Many examples of equivariant coarse homology theories are strongly additive. 
The following has been shown in \cite{equivcoarse}.
\begin{theorem}\label{kieorgergergreg}\mbox{}
\begin{enumerate} \item Equivariant coarse ordinary homology $H\cX^{G}$ is strongly additive \cite[Lem.~7.11]{equivcoarse}.
\item  Equivariant coarse algebraic $K$-homology $K\bA\cX^{G}$ with coefficients in  an additive category $\bA$ with a strict $G$-action is strongly additive  \cite[Prop.~8.19]{equivcoarse}.
\end{enumerate}

\end{theorem}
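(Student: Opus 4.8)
The plan is to handle both statements by one mechanism: in each case one realizes the value of the homology theory on a free union as a genuine product of the values on the pieces. The geometric input is the definition of $\coprod^{\free}_{i\in I}X_{i}$. Its coarse structure is generated by the disjoint unions $\bigsqcup_{i}U_{i}$, so no controlled datum can straddle two components and the components are coarsely independent; at the same time its bornology forces every bounded set to meet only finitely many $X_{i}$, yet imposes no global finiteness over $I$. The projections $p_{j}$ of \eqref{brtboijogr4g45g5} are precisely the restrictions to the $j$-th component, so it suffices to check that the assembled map $(p_{j})_{j\in I}$ is an isomorphism on whatever model computes $E$.

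For coarse ordinary homology $H\cX^{G}$ I would use the model of equivariant, controlled, locally finite chains. First I would observe that any controlled chain on $\coprod^{\free}_{i}X_{i}$ is supported within some $\bigsqcup_{i}U_{i}$; since this entourage is a disjoint union over $I$, every simplex lies in a single component, so the chain splits uniquely as a family $(c_{i})_{i\in I}$ with $c_{i}$ a chain on $X_{i}$. Conversely, an arbitrary such family assembles to a controlled chain on the free union — here it is essential that the $U_{i}$ may be chosen independently, which is exactly where the free union differs from the categorical coproduct — and the assembled chain is automatically locally finite because each bounded set meets only finitely many $X_{i}$. This identifies the chain complex of the free union with the product $\prod_{i}C_{*}(X_{i})$, and since products of chain complexes of abelian groups are exact, passing to homology yields the claimed equivalence.

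For algebraic $K$-homology $K\bA\cX^{G}$ I would argue as in \cref{rgeioreggergeg}. The restriction functor $M\mapsto (M|_{X_{i}})_{i\in I}$ should exhibit $\bV^{G}_{\bA}(\coprod^{\free}_{i}X_{i})$ as the product category $\prod_{i}\bV^{G}_{\bA}(X_{i})$: an $X$-controlled object on the free union is controlled and locally finite, so by the same component-wise decomposition it is exactly a family of controlled, locally finite objects on the pieces, and likewise for morphisms. It then remains to feed this into $K$-theory and show $K\bigl(\prod_{i}\bV^{G}_{\bA}(X_{i})\bigr)\simeq \prod_{i}K\bigl(\bV^{G}_{\bA}(X_{i})\bigr)$.

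The main difficulty is this last step, because non-connective algebraic $K$-theory does not preserve infinite products of additive categories in general, so I cannot simply invoke a formal preservation of products. Instead I would exploit the special shape of these controlled categories: every object, every morphism, and hence every simplex of the $S_{\bullet}$-construction decomposes into its components along $I$, each supported in a single $\bigsqcup_{i}U_{i}$, so that the relevant simplicial object is levelwise a product; combined with the component-wise local finiteness, this should let one identify the resulting $K$-theory spectrum with the product of the $K$-theory spectra. Concretely I would match this with \cite[Rem.~8.16]{equivcoarse}, which records exactly the additivity of the restriction data needed. For $H\cX^{G}$ no such subtlety arises, since products of chain complexes are manifestly exact; there the only point requiring care is the local-finiteness bookkeeping, i.e.\ confirming that the bornology of the free union imposes no global finiteness constraint on the index set $I$.
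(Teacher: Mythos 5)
Note first that the paper does not prove \cref{kieorgergergreg} itself: both statements are imported by citation from \cite{equivcoarse}, so the comparison is with the proofs given there. Your argument for part 1 is correct and is essentially the proof of \cite[Lem.~7.11]{equivcoarse}: the coarse structure of $\coprod^{\free}_{i\in I}X_{i}$ forces every controlled chain to split uniquely over the components, the independence of the entourages $U_{i}$ lets an arbitrary family $(c_{i})_{i\in I}$ assemble back, the bornology of the free union guarantees local finiteness of the assembled chain, and exactness of products of chain complexes of abelian groups then identifies $H\cX^{G}(\coprod^{\free}_{i\in I}X_{i})$ with $\prod_{i\in I}H\cX^{G}(X_{i})$ compatibly with the projections $(p_{j})_{j\in I}$. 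That half is fine.

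The $K$-theory half has a genuine gap at exactly the step you flag as the main difficulty. Your reduction is right, and matches \cref{rgeioreggergeg} and \cite[Rem.~8.16]{equivcoarse}: restriction to components gives an equivalence $\bV^{G}_{\bA}(\coprod^{\free}_{i\in I}X_{i})\simeq \prod_{i\in I}\bV^{G}_{\bA}(X_{i})$. But the remaining assertion, that $K\bigl(\prod_{i\in I}\bV^{G}_{\bA}(X_{i})\bigr)\to \prod_{i\in I}K\bigl(\bV^{G}_{\bA}(X_{i})\bigr)$ is an equivalence, cannot be obtained by the argument you sketch. It is true that the $S_{\bullet}$-construction of a product category is levelwise the product of the $S_{\bullet}$-constructions, but the $K$-theory spectrum is produced from this simplicial object by geometric realization (and, for the nonconnective delooping, further colimit constructions), and realization does \emph{not} commute with infinite products: the canonical map $\bigl|\prod_{i}Y_{i}\bigr|\to \prod_{i}|Y_{i}|$ is in general far from an equivalence. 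Closing precisely this gap is Carlsson's theorem that algebraic $K$-theory of additive categories commutes with infinite products, together with its extension to the nonconnective deloopings due to Kasprowski--Winges; this nontrivial input is what the proof of \cite[Prop.~8.19]{equivcoarse} rests on, and it cannot be replaced by the levelwise-product observation. (Incidentally, your remark that nonconnective $K$-theory ``does not preserve infinite products of additive categories in general'' is backwards --- it does, but this is a theorem, not a formality; what fails is only the formal argument.) So your plan correctly isolates the hard point, but the proposed resolution is exactly the formal argument that breaks down; the correct move is to cite, or genuinely reprove, the commutation theorem.
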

 
The arguments of the present paper use a weaker form of additivity we will now introduce.

Let $\bC$ be a stable $\infty$-category.  In the following an object $K$ of $\bC$ is called compact if it is $\omega$-compact, i.e., the functor $\Map_{\bC}(K,-)$ preserves filtered colimits.
\begin{ddd} \label{def:phantom}
\begin{enumerate} \item 
An object $C$ in $\bC$ is called a {\em phantom object} if $\Map(K,C)\simeq *$ for every compact object $K$ of $\bC$. 
\item A morphism $f:C\to D$ in $\bC$ is called a {\em phantom monomorphism} if and only if 
for every compact object $K$ and morphism
$\phi:K\to C$ the condition $f\circ \phi\simeq 0$ implies $\phi\simeq 0$.
\end{enumerate}
\end{ddd}

\begin{ex}In general, if the fibre of a morphism in $\bC$ is a phantom object, then the morphism is a phantom monomorphism. The converse is not true.
 
If $\bC$ is compactly generated, then  every phantom object in $\bC$ is equivalent to the zero object. This applies e.g. to the stable $\infty$-categories $\Ch_{\infty}$ of chain complexes  and of  spectra $\Sp$.   
\end{ex}

Let $E:G\BC\to \bC$ be an equivariant  $\bC$-valued  coarse homology theory. 
 \begin{ddd}\label{fgiw9fuew098fewfewfwf}
$E$ is called {\em weakly additive} if the morphism
\[ (p_{i})_{i\in I} : E( \coprod_{i\in I}^{\free}X_{i})\to \prod_{i\in I}E(X_{i})\]  is a phantom monomorphism. \end{ddd}

Since every equivalence is a phantom monomorphism it is obvious that a strongly  additive equivariant coarse homology theory is weakly additive. Hence we have the following corollary of \cref{kieorgergergreg}.
\begin{kor}\mbox{}
\begin{enumerate} \item Equivariant coarse ordinary homology $H\cX^{G}$ is weakly additive \cite[Lem.~7.11]{equivcoarse}.
\item  Equivariant coarse algebraic $K$-homology $K\bA\cX^{G}$ with coefficients in  an additive category $\bA$ with a strict $G$-action is weakly additive  \cite[Prop.~8.19]{equivcoarse}.
\end{enumerate}
\end{kor}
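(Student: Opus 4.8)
The plan is to deduce the corollary directly from \cref{kieorgergergreg} by verifying the purely formal implication that strong additivity entails weak additivity; no further input about the specific theories $H\cX^{G}$ or $K\bA\cX^{G}$ is needed beyond what \cref{kieorgergergreg} already provides.

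First I would unwind the two definitions against each other. By \cref{sdj2398sd} a theory $E$ is strongly additive precisely when, for every family $(X_{i})_{i\in I}$, the comparison morphism
\[(p_{j})_{j\in I}:E(\coprod_{i\in I}^{\free}X_{i})\to \prod_{j\in I}E(X_{j})\]
is an equivalence, whereas by \cref{fgiw9fuew098fewfewfwf} weak additivity asks only that this same morphism be a phantom monomorphism in the sense of \cref{def:phantom}. Since the two definitions concern literally the same map, the entire statement reduces to the elementary observation that every equivalence in a stable $\infty$-category is a phantom monomorphism.

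To justify that observation, let $f:C\to D$ be an equivalence with homotopy inverse $g:D\to C$, and suppose $K$ is a compact object equipped with a morphism $\phi:K\to C$ satisfying $f\circ\phi\simeq 0$. Precomposing $g$ gives $\phi\simeq g\circ f\circ \phi\simeq g\circ 0\simeq 0$, so $f$ is indeed a phantom monomorphism. (Compactness of $K$ plays no role; the implication holds for an arbitrary equivalence and an arbitrary test object.) Combining this with \cref{kieorgergergreg} then finishes both parts: both $H\cX^{G}$ and $K\bA\cX^{G}$ are strongly additive, hence their comparison morphisms are equivalences, hence phantom monomorphisms, and so both theories are weakly additive. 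I do not expect any genuine obstacle here, as the only content beyond \cref{kieorgergergreg} is the trivial fact that equivalences are phantom monomorphisms; the citations attached to the two parts of the corollary are precisely those of \cref{kieorgergergreg}.
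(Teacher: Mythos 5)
Your proposal is correct and follows exactly the paper's own route: the paper also derives the corollary from \cref{kieorgergergreg} together with the remark that every equivalence is a phantom monomorphism, hence strong additivity implies weak additivity. Your explicit verification of that remark (precomposing with a homotopy inverse) is just the spelled-out version of what the paper declares obvious.
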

 
\section{Finite decomposition complexity} 
 
In this section we introduce the notion of $G$-equivariant finite decomposition complexity ($G$-FDC) for $G$-bornological coarse spaces. Finite decomposition complexity for metric spaces was introduced by Guentner, Tessera and Yu \cite{Guentner:2010aa} and the equivariant version for metric spaces was defined in \cite{kasp}, see also \cite{kas:akt}.
The main theorem of the section is \cref{thm:mainFDC} which can be interpreted as a version of \cite[Prop.~6.6]{Ramras:2011aa} for general weakly additive coarse homology theories with weak transfers.

\subsection{Finite decomposition complexity}

Let $G$ be a group. 
Recall 
 that a $G$-coarse space is a coarse space $(X,\cC)$ such that the subset $\cC^{G} $ of invariant entourages   is cofinal in $ \cC$. 

 Let   $T$ be an invariant entourage of a  $G$-coarse space $X$.      Furthermore, let   $U$ and $V$ be $G$-invariant subsets of $X$. 
\begin{ddd}
	The subsets $U,V$ are said to be \emph{$T$-disjoint} if \[T[U]\cap V=\emptyset \quad \mbox{and} \quad U\cap T[V]=\emptyset\ .\qedhere\]
\end{ddd}

 Let $X$ be a $G$-set. \begin{ddd} An {\em equivariant family} of subsets of $X$ is a family of subsets
$(U_{i})_{i\in I}$ indexed by a $G$-set $I$  such that $U_{g(i)}=g(U_{i})$  for every $i$ in $I$ and $g$ in $G$. \end{ddd}

 Let $Y$ be a $G$-set, $X$ be a $G$-coarse space with coarse structure $\cC$, and  $f:Y\to X$ be an equivariant map of sets.
 \begin{ddd} The induced $G$-coarse structure $f^{-1}\cC$ 
on $Y$ is the maximal $G$-coarse structure on $Y$ such that the map $f$ is controlled.
  \end{ddd}
  
Let $X$ be a $G$-set  and $\cX:=(X_{i})_{i\in I}$ be a partition of $X$ into   $G$-invariant subsets.
Then we form the $G$-invariant entourage 
  \[U(\cX):=\bigsqcup_{i\in I} X_{i}\times X_{i}\ .\]
  If $\cC$ is a $G$-coarse structure on $X$, then we define a new $G$-coarse structure
  \[\cC(\cX):=\cC\langle \{V\cap U(\cX)\:|\: V\in \cC\}\rangle\ .\] 
Let $X$ be a $G$-coarse space with coarse structure $\cC$ and $\cU:=(U_i)_{i \in I}$ be an equivariant family of  subsets of $X$. Then we have a natural map of $G$-sets $f:\coprod_{i \in I} U_i \to X$.

\begin{ddd} \label{ioefoweifwefwef} We define the $G$-coarse space  $ \coprod^{\sub}_{i \in I} U_i$ to be the $G$-set  
  $\coprod_{i \in I} U_i$ with the $G$-coarse structure $(f^{-1}\cC)(\cU)$.
   \end{ddd}
   Note that the $G$-coarse structure $(f^{-1}\cC)(\cU)$ is
   generated by the entourages
  \[\sub^{\cU}(V):=V\cap U(\cU)\] for all entourages $V$ of the induced coarse structure  $ f^{-1}\cC $. 
  
  \begin{rem} \label{wrowpfewfwf}Note that
   the set of entourages
   $\{\sub^{\cU}(V)\:|\: V\in \cC^{G}\}$  of $\coprod^{\sub}_{i \in I} U_i$
   is cofinal in the coarse structure of $\coprod_{i \in I}^{\sub} U_i $.
   \end{rem}
   
   \begin{rem}  
   The coarse space  $\coprod_{i \in I}^{\sub} U_i $ is a coarsely disjoint union of  the subsets  $U_{i}$ which all have the coarse structure induced from $X$ via the inclusion $U_{i}\to X$.  But the union is in general not free.
   
   There is a coarse map
   $\coprod_{i \in I}^{\sub} U_i\to X$, but the coarse structure on the domain is in general smaller than the one induced from $X$.   
   \end{rem}

Let $X$ be a $G$-bornological coarse space and $(U_{i})_{i\in I}$ an equivariant family of subsets of~$X$.   
\begin{ddd}
We say that the family $(U_{i})_{i\in I}$ is \emph{nice} if for every invariant entourage $S$  of  $X$ containing the diagonal  the natural morphism
\[\coprod^{\sub}_{i\in I} U_{i}\to \coprod^{\sub}_{i\in I} S[U_{i}]\]
is an equivalence of $G$-bornological coarse spaces.
\end{ddd}
   
   \begin{rem}
   In the non-equivariant case (i.e., if $G$ is the trivial group)  every family of subsets of a coarse space is nice. In contrast, for non-trivial groups the inclusion of an invariant subset of   $G$-coarse space into its thickening   need not be a coarse equivalence. 
   \end{rem}

Let $X$ be a $G$-coarse space and let $\cF$ be a class of $G$-coarse spaces.
\begin{ddd}
We say that $X$ is \emph{decomposable over $\cF$} if for every invariant entourage $T$ of $X$ there exist pairwise $T$-disjoint and nice equivariant families $(U_{i}^{T})_{i\in I}$ and $(V_{j}^{T})_{j\in J}$ of subsets of $X$ such that
 \begin{enumerate}
  \item $X=U^T\cup V^T$ with $U^T := \bigcup_{i \in I} U_{i}^{T}$ and $V^T := \bigcup_{j \in J} U_{j}^{T}$.
  \item The $G$-coarse spaces
   $\coprod_{i\in I}^{\sub}U_i^T$ and $ \coprod_{j\in J}^{\sub}V_j^T$
  belong to $\cF$. \qedhere
 \end{enumerate}
\end{ddd}
Let $\cF$ be a  class of $G$-coarse spaces.
\begin{ddd}
We say that the class   $\cF$ is \emph{closed under decomposition} if every $G$-coarse space that is decomposable over $\cF$ is  contained in $\cF$.
\end{ddd}

\begin{rem}
Recall that the $G$-coarse structure $\cC$ of a $G$-coarse space $X$ induces a $G$-invariant equivalence relation $\bigcup_{U\in \cC} U$ on $X$.  The equivalence classes for this relation  are called coarse components. The group $G$ acts on the set of coarse components $\pi_{0}(X)$ in the natural way.
 
Furthermore recall that a subset $Y$ of a coarse space $X$ is called $T$-bounded for an entourage $T$ of $X$ if $Y\times Y\subseteq T$. 
 \end{rem}
 
 Let $X$ be a $G$-coarse space.
\begin{ddd}\label{efiuweofewfewf}
   $X$  is called \emph{semi-bounded} if there exists an invariant entourage $T$ of $X$ such that every coarse component of $X$ is $T$-bounded. \end{ddd} 
We denote the class of semi-bounded $G$-coarse spaces by $\SB$.

\begin{ddd}
 Let $\cD$ be the smallest class of $G$-coarse spaces that contains $\SB$ and is closed under decomposition. 
\end{ddd}

 Let $X$ be a $G$-coarse space.
 
 \begin{ddd}\label{wfiowefwefewfewf} $X$ has \emph{$G$-equivariant finite decomposition complexity} (for short \emph{$G$-FDC}) if it is contained in $\cD$.
\end{ddd}

Let $X$ be a $G$-coarse space with coarse structure $\cC$. Then $X$ has a minimal bornology $\cB_{\cC}$
which is compatible with the coarse structure. It is generated by the subsets $S[x]$ for all $x$ in $X$ and $S$ in $\cC$.

\begin{ddd}\label{weffoiwfewwef423453}
	 The  $G$-action on $X$ is  said to be \emph{discontinuous} if  for every   point  $x$   of  $ X$ and every   $B$ in $\cB_{\cC}$   the intersection  $Gx \cap B$ is finite. \end{ddd}

\subsection{\texorpdfstring{$G$}{G}-FDC for bornological coarse spaces and the main theorem}
Let $X$ be a $G$-bornological coarse space.
\begin{ddd}
We say $X$ has $G$-FDC if its underlying $G$-coarse space has $G$-FDC.
\end{ddd}
 
We refer to \cite[Def.~9.9]{equivcoarse} for the definition of the category of $G$-uniform bornological coarse spaces.

Let $X$ be a $G$-bornological coarse space, $Y$ be a $G$-invariant subset,  and $S$ be an invariant entourage.
 \begin{ddd} \label{def:Rips}
	We define the $G$-uniform bornological coarse space $P^{X}_{S}(Y) $  as follows:
	\begin{enumerate} 
		\item The underlying $G$-set  of $P^{X}_{S}(Y)$ is the $G$-set of probability measures on $X$ whose support is finite $S$-bounded and contained in $Y$. 
		\item\label{fiufowefewfefwe} We consider $P^{X}_{S}(X)$ as a $G$-simplicial complex with the  $G$-equivariant spherical quasi-metric.   This quasi-metric induces the $G$-uniform structure on the subset $P^{X}_{S}(Y)$.
		\item  The quasi-metric from \ref{fiufowefewfefwe} also induces the $G$-coarse structure on the subset $P^{X}_{S}(Y)$.
		\item The bornology of $P^{X}_{S}(X)$ is generated by the subsets $P^{X}_{S}(B)$ for all bounded subsets $B$ of $X$. It induces the bornology of $P^{X}_{S}(Y)$.
	\end{enumerate}
	We call $P_{S}^{X}(Y)$ the \emph{Rips complex}.
\end{ddd} 

In order to abbreviate the notation we will simplify the notation and write
\[P_{S}(X):=P_{S}^{X}(X)\ .\]

\begin{rem}
Let $X$ be a $G$-bornological coarse space. If $Y$ is a $G$-invariant subset, then we will use the notation $Y$ also for the $G$-bornological coarse space obtained by equipping this subset with the coarse structure  and the bornological structure  induced from $X$. If we want to underline that the structures come from $X$, then we also use the more precise notation $Y_{X}$ for this  
 $G$-bornological coarse space.

 If $S$ is an invariant entourage of $X$ and we set \begin{equation}\label{gergjoigrgergreg}
S_{Y}:=(Y\times Y) \cap S\ ,
\end{equation} then   
 we have a natural morphism of $G$-uniform bornological coarse spaces
 \[P_{S_{Y}}(Y)\to P^{X}_{S}(Y)\ .\]
It is  an isomorphism of the underlying bornological spaces.
But the  quasi-metric on the left-hand side might be larger than the quasi-metric on the right-hand side.    
\end{rem}

The following is the main theorem of the present section: 
Let $X$ be a $G$-bornological coarse space with coarse structure $\cC$ and  {let $E$ be a weakly additive coarse homology theory with weak transfers.}
\begin{theorem}\label{ergijoergregregergeg}
	\label{thm:mainFDC}
	If $X$ has $G$-FDC and a discontinuous $G$-action,
	then
	\[\colim_{S\in \cC^G}E(\cO(P_{S}(X)))\]
	is a  phantom object. 
\end{theorem}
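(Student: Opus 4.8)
The plan is to prove the statement by a well-founded induction along the inductive definition of the class $\cD$. Since $\cD$ is the smallest class containing the semi-bounded spaces $\SB$ that is closed under decomposition, it suffices to establish two things: first, that $\colim_{S\in\cC^G}E(\cO(P_{S}(X)))$ is a phantom object whenever $X\in\SB$; and second, that the class of $G$-coarse spaces with discontinuous $G$-action for which this colimit is phantom is itself closed under decomposition. I would first check that discontinuity of the action is inherited by the sub-coproducts $\coprod^{\sub}_{i\in I}U_i$ occurring in a decomposition: a bounded subset of $X$ meets each orbit finitely, hence the same holds for the subsets $U_i$ and for their sub-coproduct, so the inductive hypothesis applies to the pieces.

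For the base case I would argue that semi-boundedness makes the relevant cones flasque. If $T$ is an invariant entourage witnessing that every coarse component of $X$ is $T$-bounded, then for every invariant $S\supseteq T$ each component of $X$ is $S$-bounded. Since a bounded subset of $X$ lies in a single coarse component, the space $P_{S}(X)$ is a free union of full (bounded) simplices, one for each coarse component, and the cone $\cO(P_{S}(X))$ is flasque by shifting simultaneously along the cone direction in each summand. As $E$ vanishes on flasque spaces, $E(\cO(P_{S}(X)))\simeq 0$; because the entourages $S\supseteq T$ are cofinal in $\cC^{G}$, the colimit vanishes and is in particular phantom.

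The inductive step is the heart of the matter, and here I would use the compactness--phantom reformulation together with excision. To show $\colim_{S}E(\cO(P_{S}(X)))$ is phantom, take a compact object $K$ and a map $\phi\colon K\to\colim_{S}E(\cO(P_{S}(X)))$; by compactness it factors through a finite stage $\phi_{0}\colon K\to E(\cO(P_{S_{0}}(X)))$, and it suffices to produce $S_{1}\geq S_{0}$ that kills $\phi_{0}$ in the colimit. I would decompose $X$ at a scale $T$ dominating the thickening of $S_{0}$ needed to make the decomposition coarsely excisive, obtaining $X=U\cup V$ with $U=\bigcup_{i}U_{i}^{T}$, $V=\bigcup_{j}V_{j}^{T}$ and with the sub-coproducts $\coprod^{\sub}_{i}U_{i}^{T}$ and $\coprod^{\sub}_{j}V_{j}^{T}$ in $\cF$. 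At the scale $S_{0}\subseteq T$ the pair $(U,V)$ yields an equivariant coarsely excisive decomposition of $P_{S_{0}}(X)$, so $E\circ\cO$ produces a Mayer--Vietoris fibre sequence relating $E(\cO(P_{S_{0}}(X)))$ to the contributions of $U$, $V$ and $U\cap V$. Because $S_{0}\subseteq T$ and the members of each family are pairwise $T$-disjoint, every $S_{0}$-bounded subset of $U$ lies in a single $U_{i}^{T}$; this identifies $P_{S_{0}}(U)$ with the Rips complex of $\coprod^{\sub}_{i}U_{i}^{T}$ at the block-diagonal entourage $\sub^{\cU}(S_{0})$, the niceness hypothesis guaranteeing that the auxiliary thickenings do not disturb the identification. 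The inductive hypothesis then makes $\colim_{S'}E(\cO(P_{S'}(\coprod^{\sub}_{i}U_{i}^{T})))$ phantom, so the image of the compact $K$ in this piece dies after enlarging the block-diagonal entourage, and likewise for $V$ and for the intersection term, which is controlled by the same blockwise analysis.

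The final and hardest step is to convert these separate vanishings over the blocks into a single enlargement $S_{1}\geq S_{0}$ that kills $\phi_{0}$ along the Mayer--Vietoris sequence inside $X$. The difficulty is genuinely one of scale bookkeeping: the decomposition is coarsely excisive only for $S\subseteq T$, whereas the phantom vanishing forces one into block-diagonal entourages $\sub^{\cU}(V_{1})$ with $V_{1}$ far beyond $T$, so one must check that pushing such a vanishing forward along $\coprod^{\sub}_{i}U_{i}^{T}\to U\hookrightarrow X$ lands in the $X$-colimit at an honest entourage $S_{1}$, uniformly for all three terms of the sequence. It is precisely here that weak transfers and weak additivity enter: the families are indexed by a possibly infinite $G$-set $I$, and to detect nullity of the compact class $K$ uniformly across all $i\in I$ I would pass to the free union via the transfer $\tr_{I}\colon E\to E^{I}$ and then apply weak additivity, which asserts that $(p_{i})_{i\in I}\colon E(\coprod^{\free}_{i}U_{i}^{T})\to\prod_{i}E(U_{i}^{T})$ is a phantom monomorphism and hence reduces the vanishing of the $K$-class on the union to its vanishing on the individual factors supplied by the inductive hypothesis. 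Assembling these factorwise vanishings through the transfer and the Mayer--Vietoris sequence yields the required $S_{1}$, and with it the phantomness of $\colim_{S}E(\cO(P_{S}(X)))$.
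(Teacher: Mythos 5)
Your induction scheme (base case for semi-bounded spaces, closure under decomposition, discontinuity inherited by the pieces) is the same skeleton as the paper's, but both halves contain genuine gaps. In the base case, the claim that $\cO(P_{S}(X))$ is ``flasque by shifting simultaneously along the cone direction'' fails. For $S\supseteq T$ the complex $P_{S}(X)$ is a disjoint union of simplices $\Delta^{X_{i}}$, which is coarsely bounded in each component but \emph{not} uniformly discrete, and the hybrid coarse structure of the cone forces entourages to become uniformly small (in the uniform structure of the simplices) as the cone parameter grows. Any shift map satisfying condition (3) of flasqueness pushes pairs of points at simplex-scale distance, which a hybrid entourage may contain at small cone parameter, out to arbitrarily large cone parameter; hence $\bigcup_{n}(f^{n}\times f^{n})(U)$ is not contained in any entourage of $\cO(P_{S}(X))$ and condition (2) of flasqueness fails. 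Tellingly, your base case never invokes discontinuity, whereas the paper's \cref{prop:semiboundedisvanishing} uses it essentially: discontinuity makes each orbit trace $Gx_{i}\cap X_{i}$ finite, so its barycenter is a $G_{i}$-fixed point, giving an equivariant homotopy equivalence $P_{S}(X)\simeq\pi_{0}(X)$ onto a genuinely discrete space, and only the cone over \emph{that} space is flasque. Without discontinuity an infinite orbit of vertices supports no fixed finitely supported probability measure, so no equivariant contraction of the simplices exists and your argument cannot be repaired.

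In the inductive step you correctly locate the difficulty (the regress between the decomposition scale $T$, tied to $S_{0}$, and the enlarged entourages forced by phantom-vanishing of the pieces), but the sentence ``assembling these factorwise vanishings through the transfer and the Mayer--Vietoris sequence yields the required $S_{1}$'' is precisely the step that does not work, and the mechanism you propose is not the one that fixes it. Transfers and weak additivity over the family index $I$ buy nothing: vanishing for $\coprod^{\sub}_{i}U^{T}_{i}$ is already given, as a single space, by decomposability. The real obstruction is that after the $U$- and $V$-terms force an enlargement $S_{0}\le S'$, the Mayer--Vietoris intersection term must be identified with a Rips complex of a sub-coproduct at the \emph{new} scale, which requires disjointness of the families at a scale depending on $S'$; your decomposition was fixed at scale $T$ depending only on $S_{0}$, and re-decomposing destroys the Mayer--Vietoris square you started from. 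The paper escapes this regress only by passing to sequences: it replaces $X$ by the constant sequence $\underline{X}$ via the weak transfer over $\nat$ (\cref{lem:constantseqvanishing-componentvanishing}), decomposes the $n$-th copy at the growing scale $S_{n}^{n}$, works with the semi-free union $\coprod^{\semi(r)}$ (needed so that the thickenings form big families, which fails in the free union), identifies cones only up to bounded propagation via $\cO^{r}$ (\cref{ergioergerg345435}, \cref{lem:individualterm}), and uses weak additivity over $\nat$ (\cref{lem:componentvanishing-sequencevanishing}, \cref{lem:vanishingiscoarselyinvariant}) to pass back and forth between a sequence and its terms. Your single-space induction has no counterpart to any of this. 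Finally, your induction hypothesis (phantomness of the colimit for the space itself) is too weak to run even the amended argument: the intersection terms are invariant \emph{subsets} such as $S^{k}[U_{i}]\cap S^{k}[V]$ of the pieces, so the property being propagated must be stable under passage to invariant subsets; this is exactly why the paper's notion of $E$-vanishing (\cref{opwefwefewfwf}, \cref{roigrgegg}) quantifies over all invariant subsets rather than over the space alone.
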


\begin{ex}\label{wrgiuwrg9gergerger} In view of \cref{rgeioreggergeg}, \cref{rem:transfers}  and \cref{kieorgergergreg}, \cref{ergijoergregregergeg} applies to equivariant coarse ordinary homology $H\cX^{G}$ and to equivariant coarse algebraic $K$-homology $K\bA \cX^{G}$ with coefficients in an additive category $\bA$ with a strict $G$-action.
\end{ex}

The remaining  part of this section is devoted to the proof of \cref{thm:mainFDC}.
We fix once and for all a $\bC$-valued weakly additive (see \cref{fgiw9fuew098fewfewfwf}) coarse homology theory $E$ with weak transfers (see \cref{rgioggergergergerg}).

\subsection{Structure of the proof}\label{efwoifuiweofewfewff}
Let $X$ be a $G$-bornological coarse space with $G$-coarse structure $\cC$.
 \begin{ddd}\label{opwefwefewfwf}
	We call $X$ \emph{$E$-vanishing} if for every $G$-invariant subset $Y$ of $X$
	\[\colim_{S\in \cC^G} E(\cO(P_{S_{Y}}(Y)))\]
	is a phantom object of $\bC$.\end{ddd}
 Let $\cV_{E}$ denote the class of $E$-vanishing $G$-bornological coarse spaces.

\begin{rem}\label{foiejifoefff23f23} Let $Y$ be a $G$-invariant subset of $X$ and let $\cC(Y)$ denote the coarse structure  of $Y_{X}$. Then we have an equality of sets   $\{S_{Y}\:|\: S\in \cC^{G}\}=\cC^{G}(Y) $  (see \eqref{gergjoigrgergreg} for notation) of entourages of $Y$. Hence we have an equivalence  of objects of $\bC$
\[\colim_{S\in \cC^G} E(\cO(P_{S_{Y}}(Y)))\simeq 
\colim_{T\in \cC^G(Y)}E(\cO(P_{T}(Y)))\ .\]
We could define a notion of  a weakly $E$-vanishing $G$-bornological coarse space $X$ by just requiring that \[\colim_{S\in \cC^G} E(\cO(P_{S}(X)))\] is a  phantom object. Then $X$ is $E$-vanishing if and only if all its $G$-invariant subsets are weakly vanishing.

The motivation to define the notion of $E$-vanishing $G$-bornological coarse spaces 
as above is that it better suits the induction arguments below.
\end{rem}

 {\cref{thm:mainFDC} follows from the next theorem. Since every equivariant subspace of a $G$-bornological coarse space with $G$-FDC has again $G$-FDC, both theorems are actually equivalent.}
 
 \begin{theorem} \label{fopwefewfewfw}The class of $G$-bornological coarse spaces with $G$-FDC and discontinuous $G$-action is contained in $\cV_{E}$.  \end{theorem}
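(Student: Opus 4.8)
The plan is to prove that $\cV_{E}$ (the class of $E$-vanishing spaces) contains $\SB$ and is closed under decomposition, plus that it is closed under passage to invariant subspaces carrying a discontinuous action. Since $\cD$ is by definition the smallest class containing $\SB$ and closed under decomposition, once we know $\cV_{E}$ (intersected with the discontinuity condition) has these closure properties, every $G$-FDC space with discontinuous action lands in $\cV_{E}$, which is exactly the assertion. The key technical payoff is that being $E$-vanishing is defined via \emph{all} invariant subsets $Y$ (see \cref{opwefwefewfwf} and \cref{foiejifoefff23f23}), which is precisely the formulation that makes the induction along decompositions self-propagating: a decomposition of $X$ restricts to a decomposition of any invariant subset $Y$, so the subset-quantifier never needs to be re-established by hand.

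First I would handle the base case $\SB\subseteq\cV_{E}$. For a semi-bounded space every coarse component is $T$-bounded for some fixed invariant entourage $T$, so for entourages $S\supseteq T$ the Rips complex $P_{S}(X)$ stabilizes up to coarse equivalence; using coarse invariance of $E$ together with $u$-continuity and vanishing on flasque spaces (the cone $\cO$ of a bounded space being flasque), the colimit $\colim_{S}E(\cO(P_{S_{Y}}(Y)))$ vanishes outright — not merely as a phantom — for every invariant subset $Y$, since $Y$ is again semi-bounded. The discontinuity hypothesis is used to control the cardinality of orbits inside bounded sets so that the components behave well.

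The heart of the argument is the decomposition step, and this is where weak additivity and weak transfers enter. Suppose $X$ is decomposable over $\cV_{E}$: for each invariant $T$ we get $T$-disjoint nice families $(U_{i}^{T})_{i}$ and $(V_{j}^{T})_{j}$ with $X=U^{T}\cup V^{T}$ and the subordinate unions $\coprod^{\sub}U_{i}^{T}$, $\coprod^{\sub}V_{j}^{T}$ lying in $\cV_{E}$. On the Rips-complex level the pair $(P_{S}(U^{T}),P_{S}(V^{T}))$ (for suitable $S$ compatible with $T$) forms an equivariant complementary pair, and applying $E\circ\cO$ to the resulting Mayer--Vietoris sequence reduces the vanishing of $\colim_{S}E(\cO(P_{S}(X)))$ to vanishing for $U^{T}$, for $V^{T}$, and for their intersection. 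The niceness hypothesis is what lets me identify, up to the coarse structures involved, $E(\cO(P_{S}(U^{T})))$ with a product $\prod_{i}E(\cO(P_{S}(U_{i}^{T})))$, and here weak transfers supply the splitting of the projection $p_{i}$ while weak additivity guarantees that the comparison map to the product is a phantom monomorphism; combining a phantom monomorphism whose source maps into a product of phantom objects forces the source to be phantom after passing to the colimit. Because $\bC$ is compactly generated, phantom objects are genuinely zero, which keeps the bookkeeping clean.

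The main obstacle I expect is the decomposition step, specifically the interplay between the entourage parameter $S$ of the Rips complex and the entourage parameter $T$ governing the decomposition: the decomposition is only guaranteed for each fixed $T$, whereas the colimit ranges over all $S$, so the two filtered systems must be interleaved by a cofinality argument and the complementary-pair excision must be set up uniformly in $S$. Controlling the thickenings $S[U_{i}^{T}]$ — ensuring that niceness genuinely converts the $E$-value of the sub-union into the product and that the transfer splitting is compatible with the colimit structure — is the delicate point, and it is exactly here that the hypothesis of a discontinuous $G$-action is indispensable, since it prevents infinitely many group translates from accumulating in a bounded region and thereby keeps the free-union/sub-union comparison well-behaved. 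The remaining verifications (the base case, closure under invariant subspaces, and the formal deduction of \cref{thm:mainFDC} from \cref{fopwefewfewfw}) are comparatively routine once this uniform excision-plus-transfer mechanism is in place.
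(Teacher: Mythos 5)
Your overall skeleton (base case for semi-bounded spaces plus closure of $\cV_{E}$ under decomposition, then invoking the minimality of the class $\cD$) matches the paper, and your base case is essentially the paper's \cref{prop:semiboundedisvanishing}. But the decomposition step --- which is the entire mathematical content --- has a genuine gap, and it is exactly the obstacle you name and then wave away. For a fixed invariant entourage $T$ the decomposition of $X$ is only $T$-disjoint, so the Mayer--Vietoris argument on the Rips complexes $P_{S}(X)$ only works for those $S$ whose associated thickenings $S^{k}[U^{T}]$, $S^{k}[V^{T}]$ keep the families disjoint at the scale relevant for $S$; this is a bounded range of $S$, not a cofinal one. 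When you enlarge $S$ you are forced to pass to a new decomposition for a new $T'$, and the Mayer--Vietoris sequences for different decompositions have different terms (different sub-unions and intersections), so there is no single filtered system to ``interleave by a cofinality argument.'' The paper's resolution is precisely the machinery you omit: it introduces $E$-vanishing \emph{sequences} (\cref{roigrgegg}), the semi-free unions $\coprod^{\semi(r)}$ of \cref{regeriogregregerg} (needed so that the thickened families form big families, which fails in the free union), and decompositions of the $n$-th term at the growing scale $S_{n}^{n}$; the quotient-by-finitely-many-terms structure $\colim_{N}$ in $\bC^{\infty}$ then absorbs the fact that any fixed disjointness scale $k$ is only achieved for $n\ge N_{1}(k)$. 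This is \cref{thm:dec}, and it is then connected back to single spaces by \cref{cor:constantdecomposableseq} and \cref{lem:constantseqvanishing-componentvanishing}.

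Your proposed use of the two extra axioms is also misdirected. The weak transfer of \cref{rgioggergergergerg} is a natural transformation $\tr_{I}\colon E\to E^{I}$ where $E^{I}(X)=E(\coprod_{I}^{\free}X)$ is built from copies of the \emph{same} space; it does not produce a splitting of projections associated to a decomposition of $X$ into distinct pieces $U_{i}^{T}$, so it cannot be used to identify $E(\cO(P_{S}(U^{T})))$ with $\prod_{i}E(\cO(P_{S}(U_{i}^{T})))$ as you propose. In the paper the transfer (with $I=\nat$) is used only to descend from the constant sequence $\underline{X}$ to $X$ (\cref{lem:constantseqvanishing-componentvanishing}), while weak additivity is used to show that a sequence all of whose terms are $E$-vanishing is an $E$-vanishing sequence (\cref{lem:componentvanishing-sequencevanishing}). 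Finally, the discontinuity hypothesis plays no role in the decomposition step (the paper's \cref{thm:dec} does not assume it); it is used only in the semi-bounded base case, where finiteness of $Gx\cap X_{i}$ provides a $G_{i}$-fixed barycenter making the Rips complex equivariantly contractible onto $\pi_{0}(X)$ --- the opposite of where you claim it is ``indispensable.''
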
  

To prove \cref{fopwefewfewfw}  it suffices to show that the class $\cV_{E}$ contains all semi-bounded $G$-bornological coarse spaces with discontinuous $G$-action (\cref{prop:semiboundedisvanishing}) and is closed under decomposition. We will proceed as follows:

In \cref{roigrgegg} we introduce  the notion of an $E$-vanishing sequence  of $G$-bornological coarse spaces. Furthermore, in \cref{def:strongdec} we define   the concept of  decomposability of sequences of $G$-bornological coarse spaces. We let $\VP_{E}$ denote the class of $E$-vanishing sequences.
The main steps of the proof are now as follows:
\begin{enumerate}
\item\label{eiowfowefewfew} If a sequence     of $G$-bornological coarse spaces is decomposable over $\VP_{E}$, then the sequence belongs to $\VP_{E}$ (\cref{thm:dec}). 
\item \label{foiwefweweff3ee} If a $G$-bornological coarse space $X$ is decomposable over $\cV_{E}$, then the constant sequence $\underline{X}$ is decomposable over $\VP_{E}$ (\cref{cor:constantdecomposableseq}).
\item\label{ifuewoiufiowefwefwefwf} By \ref{eiowfowefewfew} and \ref{foiwefweweff3ee}, if $X$ is decomposable over $\cV_{E}$, then $\underline{X}$ belongs to $\VP_{E}$.
\item \label{reuifzewiewfwefewfew} If $X$ is a  $G$-bornological coarse space and $\underline{X}$ belongs to $\VP_{E}$, then $X$ belongs to $\cV_{E}$ (\cref{lem:constantseqvanishing-componentvanishing}).
 \item By \ref{ifuewoiufiowefwefwefwf} and \ref{reuifzewiewfwefewfew} we can conclude that
 if $X$ is  decomposable over $\cV_{E}$, then it belongs to $\cV_{E}$.
 \end{enumerate}

\subsection{Vanishing pairs and strong decomposability}

Let $[0,\infty)_{d}$ denote the positive ray in $\R$ considered as a $G$-bornological coarse space with the trivial $G$-action and the bornological coarse structure induced from the standard metric.

Let $X$ be a $G$-uniform bornological coarse space with coarse structure $\cC$. Recall that the  cone $\cO(X)$ is obtained from the $G$-bornological coarse space $[0,\infty)_{d}\otimes X$ by
replacing the coarse structure $\cC_{\otimes} $ of this tensor product by the hybrid structure $\cC_{h}$. In particular we have
$\cC_{h}\subseteq \cC_{\otimes}$. It follows that
the projection
 $\pr:[0,\infty)\times X\to X$ is a morphism $([0,\infty)\times X,\cC_{h})\to (X,\cC)$ in the category of $G$-coarse spaces. 
 
 For $t$ in $[0,\infty)$ and $x$ in $X$ we will denote the corresponding point in $\cO(X)$ by $(t,x)$.

Let $X$ be a $G$-uniform bornological coarse space whose coarse structure is induced by a metric $d$. We set  \[U_{r}:=\{(x,y)\in X\times X\:|\: d(x,y)\le r\}\ .\] Let $R$ be an entourage of $\cO(X)$.
\begin{ddd}\label{iofjoewffewfw}
We define the \emph{propagation} $l(R)$ in $[0,\infty)$ by
\[l(R):=\inf\{r\in [0,\infty]\:|\: (\pr\times \pr)(R)\subseteq U_{r}\}\ .\qedhere\]
 \end{ddd}

We consider a family $(X_{i})_{i\in I}$ of $G$-uniform bornological coarse spaces such that the coarse structure of $X_{i}$ is induced by an invariant metric. Here we consider the metric as specified, but we  will not introduce special notation for it. Recall that every  entourage $R$ of  
  the free union  $\coprod_{i\in I}^{\free}\cO(X_i)$ is given by $\bigsqcup_{i\in I}R_{i}$
  for a uniquely determined family 
  $(R_{i})_{i\in I}$, where $R_{i}$ is an entourage of $X_{i}$. 
  We define the \emph{propagation} of $R$ to be the element \[r(R):= \sup_{i\in I} l(R_{i})\] in $[0,\infty]$.
  
  \begin{ddd}\label{regeriogregregerg}
For $r$ in $(0,\infty)$ we define $\coprod_{i\in I}^{\semi(r)}\cO(X_i)$ to be the $G$-bornological coarse space given as follows:
\begin{enumerate}
\item The underlying  $G$-bornological space is the one of $\coprod_{i\in I}^{\free}\cO(X_i)$.
\item The $G$-coarse structure is generated by 
the entourages $R$ of $\coprod_{i\in I}^{\free}\cO(X_i) $ with propagation satisfying $r(R)\le r$.
\end{enumerate}
We further set 
\[\coprod_{i\in I}^{\semi}\cO(X_i):=\colim_{r\in (0,\infty)}\coprod_{i\in I}^{\semi(r)}\cO(X_i)\ .\qedhere\]
 \end{ddd}
 Note that the structure maps of the colimit above are given by the identity of the underlying set.
 
 \begin{rem}In other words, the coarse structure of $ \coprod_{i\in I}^{\semi}\cO(X_i)$
 is generated by those entourages of the free union which in addition have finite propagation.
 \end{rem}
 We have   canonical morphisms \[\coprod_{i\in I}^{\semi(r)}\cO(X_i)\to\coprod_{i\in I}^{\semi}\cO(X_i)\to  \coprod_{i\in I}^{\free}\cO(X_i)\ .\]
 \begin{ex}
Let $X$ be a $G$-bornological coarse space, $Y$ be an invariant subset of $X$, and $S$ be an invariant entourage of $X$. By \cref{def:Rips}   the Rips complex $P^{X}_{S}(Y)$ is a $G$-uniform bornological coarse space with a specified invariant metric.  This is our main source of examples.
\end{ex}
 
In the following we will use the abbreviated notation $(X_{n})$ for sequences $(X_{n})_{n\in \nat}$ of bornological coarse spaces  indexed by $\nat$.  If not said differently, by $\cC_{n}$ we denote the coarse structure of $X_{n}$.

Let $(X_n) $ be a sequence of $G$-bornological coarse spaces and let $(Y_n) $ be a sequence of $G$-invariant subsets, i.e., we have $Y_n \subseteq X_n$. Let $\cC_{n}^{G}$ denote the partially ordered set of $G$-invariant entourages of $X_{n}$ and consider a family $(S_{n})$ in  $ \prod_{n\in \nat}\cC^{G}_{n}$.
  \begin{lem}
	\label{lem:metricsonrips}

	For $d $ in $\IN$ the inclusion of the underlying $G$-bornological spaces  defines a morphism of $G$-bornological coarse spaces 
	\[  \coprod_{n\in\IN}^{\semi(d)} \cO(P_{S_n}^{X_n}(Y_n)) \to \coprod_{n\in\IN}^{\semi}\cO(P_{(S_n^d)_{Y_{n}}}(Y_n)). \]
	
	Furthermore, the canonical map
	\[\colim_{(S_n) \in   \prod_{n\in \nat}\cC^{G}_{n}}E\big(\coprod_{n\in\IN}^{\semi}\cO(P_{(S_n)_{Y_{n}}}(Y_n)) \big)\to \colim_{(S_n) \in   \prod_{n\in \nat}\cC^{G}_{n}} E\big(\coprod_{n\in\IN}^{\semi}\cO(P_{S_n}^{X_n}(Y_n)) \big) \]
	is an equivalence.
\end{lem}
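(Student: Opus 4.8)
The plan is to prove the two assertions in turn and to deduce the equivalence of colimits (the second assertion) from the morphism of the first assertion together with the $u$-continuity of $E$. For the first assertion I would first record the underlying-set maps. A point of $P^{X_n}_{S_n}(Y_n)$ is a probability measure whose finite support $F$ lies in $Y_n$ and satisfies $F\times F\subseteq S_n$; since $S_n$ contains the diagonal we have $S_n\subseteq S_n^d$ and hence $F\times F\subseteq (Y_n\times Y_n)\cap S_n^d=(S_n^d)_{Y_n}$, so the same measure is a point of $P_{(S_n^d)_{Y_n}}(Y_n)$. This gives a $G$-equivariant inclusion on underlying sets, compatible with the cone coordinate and with the passage to the free union. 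Properness is immediate, because on both sides the bornology of the semi-free union is assembled from the cone bornologies, and the natural morphism $P_{(S_n^d)_{Y_n}}(Y_n)\to P^{X_n}_{S_n^d}(Y_n)$ is by construction an isomorphism of underlying bornological spaces. The real content is that the map is controlled.

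The coarse structure of $\coprod^{\semi(d)}_{n}\cO(P^{X_n}_{S_n}(Y_n))$ is generated by entourages of the free union all of whose components have propagation $\le d$; unwinding \cref{iofjoewffewfw} this means that two related points project to measures $\mu,\nu$ of $P^{X_n}_{S_n}(X_n)$ that lie at spherical distance $\le d$. The key combinatorial estimate I need is:
\[\text{if } \mathrm{supp}(\mu),\mathrm{supp}(\nu)\subseteq Y_n \text{ and } d_{P^{X_n}_{S_n}(X_n)}(\mu,\nu)\le d,\ \text{then } \mathrm{supp}(\mu)\cup\mathrm{supp}(\nu)\text{ is } S_n^d\text{-bounded}.\]
I would prove this by following a near-geodesic between $\mu$ and $\nu$: a continuous path passes through a chain of simplices in which consecutive members share a face, so choosing shared vertices produces an $S_n$-chain of length controlled by $d$ connecting any point of $\mathrm{supp}(\mu)$ to any point of $\mathrm{supp}(\nu)$; as both supports lie in $Y_n$, their union is $(S_n^d)_{Y_n}$-bounded. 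Consequently $\mu$ and $\nu$ span a single simplex of $P_{(S_n^d)_{Y_n}}(Y_n)$ and therefore lie at intrinsic distance $\le \pi/2$ there. Hence the image entourage has propagation $\le \pi/2$ in every component, so it lies in the finite-propagation ($\semi$) coarse structure of the target. I expect the careful bookkeeping of constants for the spherical quasi-metric, together with the fact that the connecting path may leave $Y_n$, to be the \emph{main obstacle}; the rest of the lemma is formal around it.

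For the equivalence of colimits I would run an interleaving argument. The crucial observation is that $P_{(S_n)_{Y_n}}(Y_n)$ and $P^{X_n}_{S_n}(Y_n)$ have the \emph{same} underlying set, since a measure supported in $Y_n$ is $S_n$-bounded if and only if it is $(S_n)_{Y_n}$-bounded; the two differ only by their quasi-metrics, the $X_n$-intrinsic one being the smaller. Thus the natural comparison is the identity on underlying sets, respects the propagation filtration, and induces the canonical map $\phi_{(S_n)}$ of the statement after applying $\cO$, the semi-free union and $E$. Writing $\coprod^{\semi}=\colim_d\coprod^{\semi(d)}$ and using $u$-continuity, both sides become colimits over $d$ of their propagation-$\le d$ truncations, and the first assertion then supplies, for every $d$, maps $\psi^d_{(S_n)}$ from the truncated right-hand term into the left-hand term at the thickened family $(S_n^d)$. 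Letting $d$ and $(S_n)$ vary, the $\psi^d_{(S_n)}$ assemble into a map $\Psi$ from the right colimit to the left colimit.

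Finally I would check that $\Psi$ and $\colim\phi$ are mutually inverse, which reduces to two identities of morphisms of bornological coarse spaces, since all maps in sight are the identity on underlying sets. The composite of the comparison $P_{(S_n)_{Y_n}}(Y_n)\to P^{X_n}_{S_n}(Y_n)$ with the first-assertion map equals the parameter-thickening $P_{(S_n)_{Y_n}}(Y_n)\to P_{(S_n^d)_{Y_n}}(Y_n)$ induced by $S_n\subseteq S_n^d$, and symmetrically the composite in the other order equals the thickening $P^{X_n}_{S_n}(Y_n)\to P^{X_n}_{S_n^d}(Y_n)$. Both thickenings are exactly the transition maps of the respective colimit systems along the cofinal assignment $(S_n)\mapsto(S_n^d)$, so after passing to the colimit the two round-trips become the identity. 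This proves that $\colim\phi$ is an equivalence, completing the second assertion.
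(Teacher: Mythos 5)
Your proposal follows the same two-step strategy as the paper: the first assertion via a chain argument in the Rips complex, and the second assertion by interleaving the canonical comparison maps with the maps from the first assertion and invoking $u$-continuity of $E$. (The paper compresses the second step into one sentence; your spelled-out version, where the two round-trip composites are identified with the transition maps of the colimit systems along $(S_n)\mapsto(S_n^d)$, is exactly what that sentence means, and it is correct.)

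The one point where you deviate from the paper, and where your displayed key estimate is slightly too strong, is the endgame of the first assertion. The chain argument yields an $S_n$-chain $x_0,\dots,x_d$ from a vertex $x_0$ of the simplex containing $\mu$ to a vertex $x_d$ of a simplex containing $\nu$, hence $(x_0,x_d)\in (S_n^d)_{Y_n}$; but to conclude, as you do, that \emph{all} of $\mathrm{supp}(\mu)\cup\mathrm{supp}(\nu)$ is $S_n^d$-bounded, you must additionally connect arbitrary support points to $x_0$ and $x_d$, which costs two further steps, so your argument only gives $S_n^{d+2}$-boundedness. As written, your common-simplex claim therefore proves the lemma with target $P_{(S_n^{d+2})_{Y_n}}(Y_n)$ rather than $P_{(S_n^d)_{Y_n}}(Y_n)$. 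The paper avoids this by never claiming a common simplex: it chains through the target complex, $\mu\to\delta_{x_0}\to\delta_{x_d}\to\nu$, each step of length at most $1$, so the image entourage has propagation at most $3$ in $P_{(S_n^d)_{Y_n}}(Y_n)$ --- which suffices, since the target carries the $\semi$ (finite-propagation) structure. This discrepancy is harmless in practice: your weaker exponent feeds into the interleaving argument for the second assertion just as well, and all later uses of the lemma are insensitive to it; but to obtain the statement with exponent exactly $d$, replace the common-simplex claim by the three-step chaining.
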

\begin{proof}
	The second claim follows from the first by $u$-continuity of $E$. 
		
Assume that the sequence $(R_{n}) $ defines an entourage of the free union with propagation bounded by $d$. Fix a natural number $n$ and consider $(t,y)$ and $(t^{\prime},y^{\prime})$ in $\cO(P_{S_{n}}^{X_{n}}(Y_{n}))$.
	If $((t,y),(t',y'))$ belongs to $R_n$, then the distance between $y$ and $y'$ in $P_{S_n}^{X_n}(Y_n)$ is bounded by $d$.  There exists a family of points
	$(x_{i})_{i=0}^{d}$ in $X_{n}$ such that $x_{0}$ is a vertex of the simplex containing $y$ and $x_{d}$ is a vertex of a simplex containing $y^{\prime}$, and for every $i$ in $ \{0,\dots,d-1\}$ we have $(x_{i},x_{i+1})\in S_{n}$. Then $x_{0}$ and $x_{n}$ belong to $Y_{n}$, and the distance between $x_{0}$ and $x_{d}$ in $P_{(S_{n}^{d})_{Y_{n}}}(Y_{n})$ is   bounded by $1$. Consequently,
	the distance between $y$ and $y^{\prime}$ in $P_{(S^{d}_{n})_{Y_{n}}}(Y_{n})$ is bounded by $3$. 	
\end{proof}
	\begin{rem}
	The proof shows that we actually have a morphism of $G$-bornological coarse spaces \[  \coprod_{n\in\IN}^{\semi(d)} \cO(P_{S_n}^{X_n}(Y_n)) \to \coprod_{n\in\IN}^{\semi(3)}\cO(P_{(S_n^d)_{Y_{n}}}(Y_n)). \qedhere\]
	\end{rem}

Let $(X_{n}) $ be a sequence of $G$-bornological coarse spaces and $(S_{n})$ and $(S^{\prime}_{n})$ be families in $ \prod_{n\in \nat}\cC^{G}_{n}$ such that $(S_{n})\le (  S_{n}^{\prime})$.  
Then we have a commuting square of morphisms of $G$-bornological coarse spaces:
  
 \[\xymatrix{\coprod\limits^{\semi}\limits_{n\in\IN}\cO(P_{S_n}(X_n)) \ar[r]\ar[d]& \coprod\limits^{\free}\limits_{n\in\IN}\cO(P_{S_n}(X_n))\ar[d]\\\coprod\limits^{\semi}\limits_{n\in\IN}\cO(P_{S^{\prime}_n}(X_n)) \ar[r] & \coprod\limits^{\free}_{n\in\IN}\cO(P_{S^{\prime}_n}(X_n)) }\]
  
 In the following lemma we consider the colimit in the vertical direction. 
\begin{lem}
	\label{lem:comparison}
	We have an equivalence
	\[\colim_{(S_n)\in \prod_{n\in \nat}\cC^{G}_{n }}E\big(\coprod^{\semi}_{n\in\IN}\cO(P_{S_n}(X_n))\big)\to \colim_{(S_n)\in \prod_{n\in \nat}\cC^{G}_{n }}E\big(\coprod^{\free}_{n\in\IN}\cO(P_{S_n}(X_n))\big).\]
\end{lem}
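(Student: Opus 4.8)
The plan is to apply $u$-continuity to rewrite both colimits as colimits indexed by pairs $((S_n),(R_n))$, and then to identify the comparison map as the restriction along a cofinal inclusion of indexing categories. By $u$-continuity of $E$, the object $E\big(\coprod^{\free}_{n\in\IN}\cO(P_{S_n}(X_n))\big)$ is the colimit, over the invariant entourages of the free union, of $E$ applied to the free union equipped with the coarse structure generated by that single entourage. Every such entourage has the form $\bigsqcup_{n\in\IN}R_n$ for a family $(R_n)$ with $R_n$ an invariant entourage of $\cO(P_{S_n}(X_n))$, so the left-hand colimit over $(S_n)$ becomes a colimit over pairs $((S_n),(R_n))$. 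By \cref{regeriogregregerg} the coarse structure of the semi-union is generated precisely by the free-union entourages of finite propagation $r(\bigsqcup_n R_n)=\sup_n l(R_n)$; hence the same rewriting turns the semi-colimit into the colimit over the full subcategory $\mathcal{I}_{\semi}$ of those pairs satisfying $\sup_n l(R_n)<\infty$, sitting inside the category $\mathcal{I}_{\free}$ of all pairs. The map of the lemma is induced by the inclusion $J\colon\mathcal{I}_{\semi}\to\mathcal{I}_{\free}$.

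The heart of the argument is to show that $J$ is cofinal, and for this the key step is a taming procedure. Fix a pair $((S_n),(R_n))$ in $\mathcal{I}_{\free}$. Each $R_n$ is a single entourage of the hybrid cone $\cO(P_{S_n}(X_n))$ and therefore has finite propagation $d_n:=l(R_n)$ in the sense of \cref{iofjoewffewfw}, although $\sup_n d_n$ may be infinite. Put $S'_n:=S_n^{\lceil d_n\rceil}$; this is again an invariant entourage of $X_n$, so $(S'_n)\in\prod_{n\in\IN}\cC^G_n$ and $(S_n)\le(S'_n)$. Running the distance estimate from the proof of \cref{lem:metricsonrips} separately in each summand — a pair related by $R_n$ has its two base points at distance at most $d_n$ in $P_{S_n}(X_n)$, hence at distance at most $3$ in $P_{S'_n}(X_n)$ — shows that the image $(R'_n)$ of $(R_n)$ under the structure maps $\cO(P_{S_n}(X_n))\to\cO(P_{S'_n}(X_n))$ satisfies $\sup_n l(R'_n)\le 3$. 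Thus $\bigsqcup_n R'_n$ has finite propagation, the pair $((S'_n),(R'_n))$ lies in $\mathcal{I}_{\semi}$, and there is a morphism $((S_n),(R_n))\to((S'_n),(R'_n))$ in $\mathcal{I}_{\free}$.

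This shows that every object of $\mathcal{I}_{\free}$ admits a morphism into the image of $J$, so the comma categories under $J$ are non-empty. Since $\mathcal{I}_{\free}$ is filtered — the parameters $(S_n)$ range over the directed poset $\prod_{n\in\IN}\cC^G_n$ and, for each choice, the families of entourages form a directed poset, the two being compatible under the structure maps — these comma categories are moreover connected, so $J$ is cofinal. Finally, because a finite-propagation entourage generates the same coarse structure inside the semi-union as inside the free union, the functor computing the semi-colimit is literally the restriction along $J$ of the functor computing the free-colimit; cofinality of $J$ therefore identifies the two colimits, and unwinding the identifications shows that this equivalence is the comparison map of the lemma.

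The step I expect to be the main obstacle is the taming in the second paragraph: a general free-union entourage has component propagations $l(R_n)$ that need not be bounded uniformly in $n$, so one cannot reduce to a single $\coprod^{\semi(d)}$ as in \cref{lem:metricsonrips}. The remedy is to rescale the $n$-th Rips parameter by the $n$-dependent amount $\lceil l(R_n)\rceil$; making this precise requires knowing that each individual entourage of the hybrid cone has finite propagation and that, after the rescaling, the propagation bound becomes the uniform constant $3$.
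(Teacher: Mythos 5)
Your proposal is correct and is essentially the paper's own argument: the paper likewise combines $u$-continuity with the per-summand rescaling $S_n \mapsto S_n^{l(R_n)}$ and the propagation bound $3$ from \cref{lem:metricsonrips}, only it packages this as an explicit inverse map --- each inclusion $(\coprod^{\free}_{n\in\IN}\cO(P_{S_n}(X_n)))_R \to \coprod^{\semi(3)}_{n\in\IN}\cO(P_{S_n^{l(R_n)}}(X_n))$ induces, after taking the colimit over the entourages $R$ of the free union, an inverse to the comparison map --- rather than as a cofinality statement for the inclusion of the poset of pairs $((S_n),(R_n))$ of finite propagation into the poset of all pairs. The two formulations are interchangeable, so there is no substantive difference between your proof and the paper's.
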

\begin{proof}
We produce an inverse equivalence. Let $R$ be an entourage of the free union associated to the family $(R_n) $. Then by the same argument as in the proof of \cref{lem:metricsonrips} the inclusion of the underlying sets induces a morphism of bornological coarse spaces. 
\[ (\coprod^{\free}_{n\in\IN}\cO(P_{S_n}(X_n)))_R \to \coprod^{\semi(3)}_{n\in\IN} \cO(P_{S_n^{l(R_n)}}(X_n))\ .\]
 By $u$-continuity of $E$ applied to the colimit over the entourages $R$ of the free union
 we get the desired inverse.
\end{proof}
Let $(X_n)$ be a sequence of $G$-bornological coarse spaces and let $(Y_n) $ be a sequence of $G$-invariant subsets. Then we  consider the   object $\bC^\infty((Y_n))$  of $\bC$ given by 
 \begin{equation}\label{gerg43r4r3} \bC^\infty((Y_n)):=\colim_{N\in\IN}\colim_{(S_n) \in   \prod_{n \in \nat}\cC^{G}_{n}} E\big(\coprod_{N \leq n}^{\semi}\cO(P_{(S_n)_{Y_{n}}}(Y_n))\big)
 \ .\end{equation}
\begin{rem} Note that the connecting map for the outer  colimit   involves the projection
 \[E\big(\coprod_{N \leq n}^{\semi}\cO(P_{(S_n)_{Y_{n}}}(Y_n))\big)\to E\big(\coprod_{N+1 \leq n}^{\semi}\cO(P_{(S_n)_{Y_{n}}}(Y_n))\big)\]
given by excision for $E$ along the coarsely excisive pair 
\[\big(\cO(P_{(S_{N})_{Y_{N}}}(Y_{N})), \coprod_{N+1 \leq n}^{\semi}\cO(P_{(S_n)_{Y_{n}}}(Y_n))\big)\ .\qedhere\]
\end{rem}
 \begin{rem}
 In view of the first part of \cref{foiejifoefff23f23} we could rewrite the definition of $\bC^{\infty}((Y_{n}))$ as follows: \[\bC^\infty((Y_n)):=\colim_{N\in\IN}\colim_{(T_n) \in   \prod_{n \in \nat}\cC^{G}_{n}(Y_{n})}E\big(\coprod_{N \leq n}^{\semi}\cO(P_{T_n}(Y_n))\big) \ .\]
 In particular, the object $\bC^{\infty}((Y_{n}))$ of $\bC$ is an intrinsic invariant of the sequence of $G$-bornological coarse spaces $(Y_{n})$. 
 \end{rem}

  We furthermore define  the free version  \[ \bC^\infty_\free((Y_n)):=\colim_{N\in\IN}\colim_{(T_n) \in   \prod_{n \in \nat}\cC^{G}(Y_{n})} E\big(\coprod_{N \leq n}^{\free}\cO(P_{T_n}(Y_n))\big) \ .\]
Then \cref{lem:comparison} has the following consequence:
\begin{kor}\label{ef988u9u89423r}
The natural morphism $\bC^{\infty}((Y_{n}))\to \bC^{\infty}_{\free}((Y_{n}))$ is an equivalence.
\end{kor}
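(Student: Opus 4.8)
The plan is to reduce the statement directly to \cref{lem:comparison}, handling the two features that distinguish the corollary from that lemma: the outer colimit over $N$, and the fact that we work with a sequence of invariant subsets $(Y_n)$ rather than with ambient spaces. First I would invoke the intrinsic reformulation recorded after \eqref{gerg43r4r3} (which rests on the first part of \cref{foiejifoefff23f23}): both $\bC^\infty((Y_n))$ and $\bC^\infty_\free((Y_n))$ depend only on the sequence of $G$-bornological coarse spaces $(Y_n)$, each equipped with the coarse structure $\cC^G(Y_n)$ induced from the ambient $X_n$. Thus I may forget the ambient spaces and regard $(Y_n)$ as a sequence of $G$-bornological coarse spaces in its own right.

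Next I would fix a truncation level $N$ and analyze the inner double colimit. Since the object $\coprod_{N \leq n}^{\semi}\cO(P_{T_n}(Y_n))$ involves only the entourages $T_n$ with $n \geq N$, the inner colimit over $(T_n) \in \prod_{n} \cC^G(Y_n)$ is computed on the cofinal subposet obtained by restricting to indices $n \geq N$, the coordinates with $n < N$ contributing nothing. Reindexing $\{n \in \IN : n \geq N\}$ by $\IN$, this inner colimit is precisely the left-hand side of \cref{lem:comparison} applied to the truncated sequence $(Y_n)_{n \geq N}$, and likewise for the free version on the right. The construction of the inverse in the proof of \cref{lem:comparison} --- via the propagation bound of \cref{lem:metricsonrips} together with $u$-continuity of $E$ --- uses nothing about the index set beyond its being a countable discrete set, so it applies verbatim to the truncated sequence. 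Hence, for each $N$, the morphism
\[\colim_{(T_n)}E\big(\coprod_{N \leq n}^{\semi}\cO(P_{T_n}(Y_n))\big)\to \colim_{(T_n)}E\big(\coprod_{N \leq n}^{\free}\cO(P_{T_n}(Y_n))\big)\]
is an equivalence.

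Finally I would assemble these levelwise equivalences into the asserted one by passing to the colimit over $N$. The only point requiring real attention here --- the step I expect to be the main obstacle --- is checking that the equivalences for successive $N$ are compatible with the structure maps of the outer colimit, namely the excision projections $E(\coprod_{N \leq n}) \to E(\coprod_{N+1 \leq n})$ described in the remark after \eqref{gerg43r4r3}. Concretely, one must verify that the canonical morphism $\coprod^{\semi} \to \coprod^{\free}$ intertwines the excision decompositions defining these projections in the semi-free and the free settings. Since both decompositions split off the same summand $\cO(P_{T_N}(Y_N))$ and the comparison morphism is the identity on underlying sets, naturality of the excision equivalence for $E$ makes the relevant squares commute. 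The diagram indexed by $N$ is then a diagram of equivalences, and taking its colimit yields the equivalence $\bC^\infty((Y_n)) \to \bC^\infty_\free((Y_n))$, as claimed.
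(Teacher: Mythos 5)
Your proposal is correct and follows exactly the route the paper intends: the paper states \cref{ef988u9u89423r} as an immediate consequence of \cref{lem:comparison}, and your argument simply makes that deduction explicit via the intrinsic reformulation of \cref{foiejifoefff23f23}, the application of \cref{lem:comparison} to each truncated (reindexed) sequence $(Y_n)_{n\ge N}$, and the compatibility of the comparison maps with the excision projections before passing to the colimit over $N$. All three filled-in steps are sound, so this matches the paper's (implicit) proof.
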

Let $(X_n)$ be a sequence of $G$-bornological coarse spaces. 

 \begin{ddd}\label{roigrgegg}
 The sequence 
 $(X_n) $  is called \emph{$E$-vanishing}  if for every sequence $(Y_n) $ of $G$-invariant subsets   the object  $\bC^\infty((Y_n))$ is a  phantom object.
\end{ddd}
 We let $\VP_{E}$ denote the class of $E$-vanishing sequences.

 Let $(X_n)$ and $(Y_n)$ be sequences of $G$-bornological coarse spaces. 
\begin{lem}\label{lem:vanishingiscoarselyinvariant}
	Assume that  for  every natural number $n$ the bornological coarse space $X_n$ is  equivalent to $Y_n$. Then the sequence
 $(X_n)$ is $E$-vanishing if and only if the sequence $(Y_n)$ is $E$-vanishing.
\end{lem}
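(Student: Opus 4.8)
The plan is to reduce the statement to the observation that the invariant $\bC^{\infty}((Y_n))$, as defined in \eqref{gerg43r4r3}, depends only on the \emph{coarse equivalence class} of each $Y_n$ as a $G$-bornological coarse space, and in fact only on the induced coarse structures on the invariant subsets. By \cref{roigrgegg}, the sequence $(X_n)$ is $E$-vanishing precisely when $\bC^\infty((Z_n))$ is a phantom object for every sequence $(Z_n)$ of $G$-invariant subsets $Z_n\subseteq X_n$; so the task is to match up the invariant subsets of $X_n$ with those of $Y_n$ and show that the corresponding objects $\bC^\infty$ agree.

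First I would fix, for each $n$, a coarse equivalence $\phi_n\colon X_n\to Y_n$ (with coarse inverse $\psi_n$). Given a sequence of $G$-invariant subsets $Z_n\subseteq Y_n$, I would transport them to $W_n:=\psi_n(Z_n)\subseteq X_n$ (or dually transport subsets of $X_n$ to subsets of $Y_n$). The key point to verify is that $\phi_n$ restricts to a coarse equivalence $W_n\to Z_n$ of the induced $G$-bornological coarse spaces $(W_n)_{X_n}\to (Z_n)_{Y_n}$. This is the standard fact that a coarse equivalence restricts to a coarse equivalence on invariant subsets, using that the structures on $W_n$ and $Z_n$ are the induced ones. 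The appropriate place to invoke coarse invariance is not on the subsets directly but on the Rips complexes and their cones: a coarse equivalence $Z_n\to W_n$ induces, after passing to Rips complexes $P_{T_n}(W_n)\to P_{T'_n}(Z_n)$ for suitably comparable entourages, and then to cones $\cO(P_{T_n}(W_n))\to \cO(P_{T'_n}(Z_n))$, a morphism compatible with the semi-free union structure, which becomes an equivalence after applying $E$ by coarse invariance of $E$.

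The main technical content, and the step I expect to be the main obstacle, is checking that these restricted coarse equivalences interact correctly with the colimit over entourages in the definition \eqref{gerg43r4r3}. Concretely, a coarse equivalence $\phi_n$ need not preserve entourages on the nose, but it is \emph{controlled} and \emph{cofinal} in the sense that for every $T\in\cC^{G}(W_n)$ there is a $T'\in\cC^G(Z_n)$ with $(\phi_n\times\phi_n)(T)\subseteq T'$, and vice versa for $\psi_n$. This cofinality lets me identify the two diagrams $\{\,\coprod_{N\le n}^{\semi}\cO(P_{T_n}(W_n))\,\}$ and $\{\,\coprod_{N\le n}^{\semi}\cO(P_{T'_n}(Z_n))\,\}$ up to a cofinal reindexing of the colimit over $(T_n)\in\prod_n\cC^G(W_n)$ versus $\prod_n\cC^G(Z_n)$. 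Using the intrinsic reformulation of $\bC^\infty$ noted in the remark following \eqref{gerg43r4r3} (that it is an invariant of the sequence of induced $G$-bornological coarse spaces), together with $u$-continuity and coarse invariance of $E$, I would conclude that the induced morphism on colimits is an equivalence.

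Putting this together: for an arbitrary sequence $(Z_n)$ of $G$-invariant subsets of $Y_n$, I obtain $\bC^\infty((Z_n))\simeq \bC^\infty((W_n))$ with $W_n\subseteq X_n$ invariant, and symmetrically every sequence of invariant subsets of $X_n$ arises (up to coarse equivalence) this way. Hence the collection of objects $\bC^\infty$ arising from invariant subsets of $(X_n)$ coincides, up to equivalence in $\bC$, with the collection arising from $(Y_n)$. Since being a phantom object is preserved under equivalence, $\bC^\infty((Z_n))$ is phantom for all sequences of invariant subsets of $(Y_n)$ if and only if the analogous statement holds for $(X_n)$; that is, $(X_n)$ is $E$-vanishing if and only if $(Y_n)$ is. This yields the claimed symmetric equivalence.
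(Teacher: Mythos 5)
Your proposal takes a genuinely different route from the paper, and that route has a gap at its central step. You correctly identify the interaction with the colimits over entourages as the main obstacle, but the claimed resolution --- that cofinal reindexing plus coarse invariance and $u$-continuity of $E$ yields an equivalence $\bC^{\infty}((W_n))\simeq \bC^{\infty}((Z_n))$ --- does not work. First, ``coarse invariance of $E$'' is not the relevant property: the cone functor $\cO$ is deliberately \emph{not} coarsely invariant (it remembers the uniform structure), so a coarse equivalence $W_n\to Z_n$ does not induce a coarse equivalence of cones. What is true is that, after enlarging entourages, the composites $g_n\circ f_n$ become homotopic to inclusions of Rips complexes by unit-speed homotopies, and homotopy invariance of the cone functor then gives an equivalence only after passing to the colimit over entourages, \emph{separately for each fixed $n$}; this is exactly \eqref{qdiqwzdiqwduqwdwqdq} in the paper. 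Second, and more seriously, these per-$n$ equivalences cannot be assembled into an equivalence of the objects $\bC^{\infty}$ or $\bC^{\infty}_{\free}$: the coarse equivalences and the homotopies witnessing $g_nf_n\sim \id$ carry no control that is uniform in $n$, so they do not define a coarse homotopy of the semi-free or free unions (the required cylinder height function fails to be controlled for entourages of the union), and $E$ applied to a free union is not determined by its values on the summands --- that would be strong additivity, whereas only weak additivity (\cref{fgiw9fuew098fewfewfwf}) is assumed. The telltale sign is that your argument never invokes weak additivity: if cofinality and coarse invariance sufficed, this standing hypothesis would be superfluous precisely where the paper needs it.

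The paper's proof is structured to avoid asserting any equivalence between $\bC^{\infty}((X_n))$ and $\bC^{\infty}((Y_n))$; it only transfers the phantom property. Given a compact $K$ and $\phi\colon K\to \bC^{\infty}((X_n))$, it factors $\phi$ through a finite stage $E(\coprod^{\free}_{N\le n}\cO(P_{S_n}(X_n)))$ (using \cref{ef988u9u89423r}), pushes forward to the $Y$-side to see that the induced map to $\prod_{N\le n}E(\cO(P_{T_n}(Y_n)))$ vanishes after enlarging $N$ and $(T_n)$, pulls this back along the componentwise equivalences \eqref{qdiqwzdiqwduqwdwqdq} to conclude that the composite of $\tilde\phi$ with the comparison map $E(\coprod^{\free})\to \prod E$ vanishes, and only then uses that this comparison map is a phantom monomorphism (weak additivity) to conclude $\tilde\phi\simeq 0$. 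Your proposal is missing exactly this mechanism for passing information between the free union and the product of its components. A secondary problem: transporting subsets via $W_n:=\psi_n(Z_n)$ and claiming $\phi_n$ restricts to a coarse equivalence $W_n\to Z_n$ is not a ``standard fact'' equivariantly --- $\phi_n(W_n)$ only lies in a thickening of $Z_n$, and in the equivariant setting an invariant subset need not be coarsely equivalent to its thickening; this failure is the very reason the paper introduces the notion of nice families.
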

\begin{proof}
By the symmetry of the assertion it suffices to show that if  $(Y_{n})$ is $E$-vanishing, then also $(X_{n})$ is $E$-vanishing.

By assumption, for every natural number $n$ we can find morphisms of $G$-bornological coarse spaces $f_{n}:X_{n}\to Y_{n}$ and $g_{n}:Y_{n}\to X_{n}$  and entourages $U_{n}$ of $X_{n}$ and $V_{n}$ of $Y_{n}$  
 such that $g_n\circ f_n$  is $U_{n}$-close to $\id_{X_{n}}$, and 
 $f_n\circ g_n$ is $V_{n}$-close to $\id_{Y_{n}}$.

 Assume that $S_{n}$ in  $\cC^G_{n}$ is given. If we choose    $S_{n}^{\prime}$ in $\cC^G(X_n)$   such that
 \[((g_{n}\circ f_{n})\times (g_{n}\circ f_{n}))(S_{n})\cup U_{n}\subseteq S_{n}^{\prime}\ ,\]
 then $g_n\circ f_n$ induces a map $P_{S_{n}}(X_{n})\to P_{S_{n}^{\prime}}(X_{n})$
 which is homotopic to $\id_{X_{n}}$ by a unit-speed homotopy. We have a similar statement for the composition $f_{n}\circ g_{n}$. By homotopy invariance  of the cone functor \cite[Cor. 9.38]{equivcoarse}  and a cofinality consideration this implies that $f_{n}$ induces an equivalence  \begin{equation}\label{qdiqwzdiqwduqwdwqdq}
 \colim_{S\in \cC^{G}_{n}}E( \cO(P_{S}(X_{n})))\simeq  \colim_{T\in \cC^{G}(Y_{n})}E(\cO(P_{T}(Y_{n})))\end{equation} in $\bC$
for every natural number $n$.

We assume that $(Y_{n})$ is $E$-vanishing. Then by \cref{roigrgegg}  the object
$\bC^{\infty}((Y_{n}))$   is a   phantom object.
We show    that $\bC^{\infty}((X_{n}))$ is a phantom object, too. 

Let $K$ be a compact object in $\bC$   and $\phi:K\to \bC^{\infty}((X_{n}))$ be a morphism. We must show that $\phi$ is equivalent to zero.

By compactness of $K$ there is a factorization $\tilde \phi$ as in the following diagram:
\[\xymatrix{&E(\coprod\limits_{N\le n}^{\free}  \cO(P_{S_{n}}(X_{n})))\ar[d]\ar[dr]^{!!}\ar@{-->}[r]&E(\coprod\limits_{N\le n}^{\free}  \cO(P_{T_{n}}(Y_{n})))\ar[dr] &\\K\ar@{..>}[urr]\ar@/^-1cm/[rr]^-{!}\ar[ur]^-{\tilde \phi}\ar[r]_-{\phi}&\bC^{\infty}((X_{n}))&\prod\limits_{N\le n}E( \cO(P_{S_{n}}(X_{n})))\ar@{-->}[r]&\prod\limits_{N\le n}E( \cO(P_{T_{n}}(Y_{n})))}\]
If we choose  the sequence of entourages $(T_{n})$ sufficiently large (depending on the choice of $(S_{n})$), then the dashed arrows exist.
Note that by \cref{ef988u9u89423r} and our assumption we know that $\bC^{\infty}_{\free}((Y_{n}))$ is a phantom object. Since $K$ is compact, after increasing $N$ further and choosing  $(T_{n})$ sufficiently large  the dotted arrow becomes equivalent to zero. Now we use \eqref{qdiqwzdiqwduqwdwqdq} again  in order to see that if we choose $(S_{n})$ and $(T_{n})$ sufficiently  large, then the arrow marked by $!$ is equivalent to zero.
 But then $\tilde \phi$ is equivalent to zero  since we assume that $E$ is weakly additive and hence the arrow marked by $!!$ is a phantom monomorphism. This finally implies that $\phi$ is equivalent to zero.\end{proof}

If $X$ is a $G$-bornological coarse space, then we can consider the constant sequence $\underline{X}$ of $G$-bornological spaces indexed by $\nat$.

Let $X$ be a $G$-bornological coarse space with coarse structure $\cC$. 
\begin{lem}\label{lem:constantseqvanishing-componentvanishing}
If $\underline{X}$ is an $E$-vanishing sequence, then $X$ is $E$-vanishing.
\end{lem}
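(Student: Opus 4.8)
The plan is to fix a $G$-invariant subset $Y\subseteq X$ and prove that the object $A:=\colim_{T\in\cC^G(Y)}E(\cO(P_T(Y)))$ is phantom; by \cref{opwefwefewfwf} together with the first part of \cref{foiejifoefff23f23} this is exactly the assertion that $X$ is $E$-vanishing. First I would feed the constant sequence $\underline Y=(Y)_{n\in\IN}$ of $G$-invariant subsets of $\underline X$ into the hypothesis: by \cref{roigrgegg} the object $\bC^\infty(\underline Y)$ is phantom, and hence by \cref{ef988u9u89423r} so is its free version $\bC^\infty_\free(\underline Y)$. Recall that the latter is the filtered colimit $\colim_{N}\colim_{(T_n)}E(\coprod_{N\le n}^\free\cO(P_{T_n}(Y)))$, and that the object sitting at the stage $N=0$, $(T_n)=(T_0)$ is precisely $E(\coprod_{n\in\IN}^\free Z)$ with $Z:=\cO(P_{T_0}(Y))$.

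Now take a compact object $K$ and a morphism $\phi\colon K\to A$; the goal is $\phi\simeq 0$. Since $K$ is compact and $\cC^G(Y)$ is filtered, $\phi$ factors as $\iota_{T_0}\circ\tilde\phi$ for some entourage $T_0$, where $\tilde\phi\colon K\to E(Z)$ and $\iota_{T_0}$ is the canonical map into $A$. The crucial move is to apply the weak transfer $\tr_\IN\colon E\to E^\IN$ to the space $Z$, producing $\tau:=(\tr_\IN)_Z\colon E(Z)\to E(\coprod_{n\in\IN}^\free Z)$ which, by \eqref{bvtrb4g4tb}, satisfies $p_j\circ\tau\simeq\id_{E(Z)}$ for every $j$. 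Postcomposing $\tau\circ\tilde\phi$ with the colimit structure map into $\bC^\infty_\free(\underline Y)$ and using that this object is phantom while $K$ is compact yields that this composite is nullhomotopic in the colimit.

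By compactness of $K$, nullhomotopy in a filtered colimit is already witnessed at a finite stage: there are $N'\in\IN$ and a sequence $(T'_n)_{n\ge N'}$ with $T'_n\ge T_0$ such that the transition map $c\colon E(\coprod_{n\in\IN}^\free Z)\to E(\coprod_{N'\le n}^\free\cO(P_{T'_n}(Y)))$ satisfies $c\circ\tau\circ\tilde\phi\simeq 0$. I would then fix any $j\ge N'$ and postcompose with the projection $p_j$. Tracing the structure maps---the outer ones are projections (see the remark following \eqref{gerg43r4r3}) and the inner ones are entourage enlargements, both acting factorwise---gives $p_j\circ c\simeq e_{T_0,T'_j}\circ p_j$, where on the right $p_j$ is the projection of $\coprod_{n\in\IN}^\free Z$ onto its $j$-th factor and $e_{T_0,T'_j}\colon E(Z)\to E(\cO(P_{T'_j}(Y)))$ is the enlargement map. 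Since $p_j\circ\tau\simeq\id$, the relation $p_j\circ c\circ\tau\circ\tilde\phi\simeq 0$ collapses to $e_{T_0,T'_j}\circ\tilde\phi\simeq 0$. As $T'_j\ge T_0$ we have $\iota_{T_0}\simeq\iota_{T'_j}\circ e_{T_0,T'_j}$, and therefore $\phi\simeq\iota_{T'_j}\circ e_{T_0,T'_j}\circ\tilde\phi\simeq 0$; since $Y$ was arbitrary, $X$ is $E$-vanishing.

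The hard part will be the bookkeeping in the third paragraph: one must correctly identify the composite of the colimit structure maps with $p_j$ and verify the clean cancellation $p_j\circ c\circ\tau\simeq e_{T_0,T'_j}$, which hinges on the naturality of the excision projections $p_j$ with respect to factorwise morphisms of free unions together with the defining identity $p_j\circ\tau\simeq\id$ of the weak transfer. It is worth noting that, unlike in \cref{lem:vanishingiscoarselyinvariant}, weak additivity plays no role here; the argument uses only the existence of a weak transfer for the countable index set $\IN$ and the compactness of $K$.
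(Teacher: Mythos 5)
Your proposal is correct and is essentially the paper's own argument: factor $\phi$ through a finite stage $E(\cO(P_{T_0}(Y)))$ by compactness, apply the weak transfer there to land in $E(\coprod_{n\in\IN}^{\free}\cO(P_{T_0}(Y)))$, use \cref{ef988u9u89423r} and the phantomness of $\bC^{\infty}(\underline{Y})$ together with compactness of $K$ to kill the composite at a finite stage of the colimit $\bC^{\infty}_{\free}(\underline{Y})$, and then project onto a single factor $j\ge N'$ so that the identity $p_{j}\circ \tr_{\IN}\simeq \id$ forces the enlargement map to annihilate $\tilde\phi$, hence $\phi\simeq 0$. The only differences are presentational—the paper packages the ``null at a finite stage'' step as a lift through the cofibre sequence whose cofibre is $\bC^{\infty}_{\free}(\underline{Y})$ and projects onto the factor with index $N_{0}+1$—and your closing observation that weak additivity is not needed here (only the weak transfer and compactness) matches the paper's proof.
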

\begin{proof}
We must show that for every $G$-invariant subset $Y$ of $X$ the object \begin{equation}\label{wfoihiofefwefwefewfewf}
\colim_{S\in \cC^{G}}E(\cO(P_{S_{Y}}(Y)))\end{equation}
is a  phantom object.

 Let $K$ be a compact   object of $\bC$ and  consider a morphism \[f \colon K \to \colim_{S \in \cC^G} E(\cO(P_{S_{Y}}(Y)))\ .\] We must show that $f$ is equivalent to zero.
	
	 In the following we build step by step the following diagram: 
	\[\xymatrix{&K\ar@{..>}[ddddr]\ar@{.>}[d]^{\tilde f}\ar[r]^-{f}\ar@/_3.5cm/@{-->}[dddd]_-{f'}& \colim\limits_{S \in \cC^G} E(\cO(P_{S_{Y}}(Y)))\ar[d]^-{\tr_{\nat}}\\
	   & E(\cO(P_{R_{Y}}(Y)))\ar[ru]\ar[d]^-{\tr_{\nat}} &  \colim\limits_{S \in \cC^G} E(\coprod\limits_{n \in \IN}^{\free}\cO(P_{S_{Y}}(Y)))\ar[d]^{!}\\
	   &E(\coprod\limits_{n \in \IN}^{\free} \cO(P_{R_{Y}}(Y)))\ar[d]^{!!!}\ar[ru] & \colim\limits_{(S_n)\in \prod\limits_{n\in \nat}\cC^{G}} E(\coprod\limits_{n \in \IN}^{\free} \cO(P_{S_n}(Y))) \\
	   &E(\coprod\limits_{n \in \IN}^{\free} \cO(P_{(R_n)_{Y}}(Y)))\ar[ur] & \\
	   &E(\coprod\limits_{n \leq N_0}^{\free} \cO(P_{(R_n)_{Y}}(Y)))\ar[u]\ar[r] & \colim\limits_{N\in \nat} \colim\limits_{ (S_n)\in \prod\limits_{n \in \nat}\cC^{G}}E(\coprod\limits_{n \leq N}^{\free} \cO(P_{(S_n)_{Y}}(Y)))\ar[uu]_{!!} \\
	}\]
	First of all, by compactness of $K$ there exists an invariant entourage $R$ of $X$   such that we have the factorization $\tilde f$. The next square expresses the naturality of the transfer $\tr_\nat$ (see \cref{rgioggergergergerg}).
	The map marked by $!$ is the canonical map induced by the inclusion of the index set of the colimit in the domain into the index set of the colimit of its target.
	
	 By \cref{ef988u9u89423r} for every $G$-invariant subset $Y$ of $X$ we have an equivalence
\begin{equation}\label{veviuhiufrerf}
\bC^{\infty}(\underline{Y})\simeq \bC^{\infty}_{\free}(\underline{Y})\ .
\end{equation}
 By assumption,
$\bC^{\infty}(\underline{Y})$ and hence by \eqref{veviuhiufrerf} also 
$\bC^{\infty}_{\free}(\underline{Y})$ is a  phantom object. Since the latter is the cofibre of the lower right vertical map marked by $!!$, and since 
  $K$ is compact,  we get the diagonal dotted factorization of the composition $!\circ \tr_{\nat}\circ f$. Using again compactness of $K$ we can choose $N_{0}$  in $\nat$ and $(R_{n})$ in $ \prod_{N_{0}\le n}\cC^{G}$ sufficiently large such that the factorization $f^{\prime}$ exists and the diagram commutes.  	

We now consider the projection (see \eqref{brtboijogr4g45g5})
\[p:E(\coprod_{n \in \IN}^{\free} \cO(P_{(R_n)_{Y}}(Y))) \to E(\cO(P_{(R_{N_0+1})_{Y}}(Y)))\] arising from excision and the inclusion of the summand with index $N_{0}+1$. The composition of $f'$ with the canonical map \[E(\coprod_{n \leq N_0}^{\free} \cO(P_{R_{Y}}(Y))) \to E(\coprod_{n \in \IN}^{\free} \cO(P_{R_{Y}}Y)))\] and $p$ is equivalent to zero. Hence the composition $p\circ !!!\circ \tr_{\nat} \circ \tilde f$ is equivalent to zero. On the other hand, by property \eqref{bvtrb4g4tb}  of the transfer, the latter  is equivalent to 
  \[  K \xrightarrow{\tilde f}E(\cO(P_{R_{Y} }(Y)))\to E(\cO(P_{R_{N_0+1}}(Y)))\ .\] 
  This implies that $f$ is equivalent to zero.
\end{proof}

We now consider a sequence $(X_{n})$ of $G$-bornological coarse spaces. 
 
 \begin{lem}\label{lem:componentvanishing-sequencevanishing}
If the $G$-bornological coarse space $X_n$ is $E$-vanishing for every $n$ in $\IN$, then the sequence of $G$-bornological coarse spaces $(X_n) $ is $E$-vanishing.
\end{lem}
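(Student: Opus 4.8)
The plan is to reduce to the free version of the invariant and then exploit weak additivity of $E$ one component at a time. Fix a sequence $(Y_n)$ of $G$-invariant subsets $Y_n\subseteq X_n$; I have to show that $\bC^\infty((Y_n))$ is a phantom object. By \cref{ef988u9u89423r} it is equivalent to show that the free version
\[\bC^\infty_\free((Y_n))=\colim_{N\in\IN}\colim_{(T_n) \in \prod_{n \in \nat}\cC^{G}(Y_{n})} E\big(\coprod_{N \leq n}^{\free}\cO(P_{T_n}(Y_n))\big)\]
is phantom, which is the natural target since weak additivity and the projections \eqref{brtboijogr4g45g5} are formulated for the free union. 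So I would fix a compact object $K$ of $\bC$ together with a morphism $\phi\colon K\to \bC^\infty_\free((Y_n))$ and argue that $\phi\simeq 0$.

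First, using that $K$ is compact and the indexing poset is filtered, I would factor $\phi$ through a single stage: there is some $N$, a family $(T_n)\in\prod_n\cC^G(Y_n)$, and a morphism $\tilde\phi\colon K\to E\big(\coprod_{N\le n}^\free\cO(P_{T_n}(Y_n))\big)$ whose composite with the canonical map into the colimit is $\phi$. Postcomposing $\tilde\phi$ with the projections \eqref{brtboijogr4g45g5} produces, for every $m\ge N$, the morphisms
\[\psi_m\colon K\xrightarrow{\tilde\phi} E\big(\coprod_{N\le n}^\free\cO(P_{T_n}(Y_n))\big)\xrightarrow{p_m}E(\cO(P_{T_m}(Y_m)))\ .\]

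The heart of the argument invokes the hypothesis that each $X_m$ is $E$-vanishing (\cref{opwefwefewfwf}): applied to the invariant subset $Y_m$, this says that $\colim_{T\in\cC^G(Y_m)}E(\cO(P_{T}(Y_m)))$ is a phantom object. Since $K$ is compact, the composite of $\psi_m$ with the colimit structure map is null, and because the colimit is filtered there is an entourage $T_m'\ge T_m$ in $\cC^G(Y_m)$ at which $\psi_m$ already becomes zero after pushing forward to $E(\cO(P_{T_m'}(Y_m)))$. Choosing such a $T_m'$ for every $m\ge N$ yields a family $(T_n')\ge(T_n)$, and by naturality of the projections $p_m$ in the entourages the composite
\[K\xrightarrow{\tilde\phi}E\big(\coprod_{N\le n}^\free\cO(P_{T_n}(Y_n))\big)\to E\big(\coprod_{N\le n}^\free\cO(P_{T_n'}(Y_n))\big)\xrightarrow{(p_m)_m}\prod_{N\le m}E(\cO(P_{T_m'}(Y_m)))\]
has each of its components null, hence is itself null.

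Finally I would invoke weak additivity of $E$ (\cref{fgiw9fuew098fewfewfwf}): the map $(p_m)_m$ into the product is a phantom monomorphism, so the vanishing of the displayed composite forces the transition map
\[K\xrightarrow{\tilde\phi}E\big(\coprod_{N\le n}^\free\cO(P_{T_n}(Y_n))\big)\to E\big(\coprod_{N\le n}^\free\cO(P_{T_n'}(Y_n))\big)\]
to be zero. As this is a transition map of the inner colimit (keeping $N$ fixed and raising $(T_n)$ to $(T_n')$) through which $\phi$ factors via $\tilde\phi$, I conclude $\phi\simeq 0$, so $\bC^\infty_\free((Y_n))$ is phantom. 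The step I expect to be the main obstacle is that an \emph{infinite} product does not commute with the filtered colimit over $\prod_n\cC^G(Y_n)$, so one cannot conclude directly from the componentwise phantom objects; this is precisely why the argument must kill the individual components $\psi_m$ first at finite stages $T_m'$ and only then convert vanishing of the product map into vanishing of the free-union map through the phantom-monomorphism property supplied by weak additivity.
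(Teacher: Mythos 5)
Your proposal is correct and follows essentially the same route as the paper's proof: reduce to $\bC^{\infty}_{\free}((Y_n))$ via \cref{ef988u9u89423r}, factor the test map from a compact object through a finite stage, kill each component at an enlarged entourage $T_m'$ using the $E$-vanishing hypothesis on $X_m$ applied to $Y_m$, and then use weak additivity (the phantom-monomorphism property of $(p_m)_m$) to conclude that the transition map, and hence $\phi$, is null. Your closing remark about why one cannot simply commute the infinite product past the filtered colimit is exactly the point that makes this two-step structure necessary.
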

\begin{proof}
We consider a sequence $(Y_{n})$ of invariant subspaces of $(X_{n})$.
We must show that $\bC^{\infty}((Y_{n}))$ is a phantom object.

By \cref{ef988u9u89423r} it suffices to show that $\bC_{\free}^{\infty}((Y_{n}))$ is a phantom object.
  
Let $K$ be a compact object of $\bC$ and \[f \colon K \to \colim_{N\in \nat}\colim_{(S_n)\in \prod_{n \in \nat}\cC^{G}(X_{n})}E(\coprod^{\free}_{N\le n}\cO(P_{(S_n)_{Y_{n}}}(Y_n)))\] be 
some  morphism. We must show that $f$ is equivalent to zero.

 Since $K$ is compact there exists a factorization
\[\xymatrix{K\ar[rr]^-{f}\ar@{..>}[dr]^-{\tilde f}&& \colim\limits_{N\in \nat} \colim\limits_{(S_n)\in \prod_{n \in \nat}\cC^{G}(X_{n})} E(\coprod\limits^{\free}\limits_{N\le n}\cO(P_{(S_n)_{Y_{n}}}(Y_n)))\\&E(\coprod\limits^{\free}\limits_{N_{0}\le n}\cO(P_{(R_n)_{Y_{n}}}(Y_n)))\ar[ur]&}\]
for some sufficiently large choices of $N_{0}$ in $\nat$  and $(R_{n})$ in $ \prod_{n \in \nat}\cC^{G}(X_{n})$.

  By our assumption on the $G$-bornological coarse spaces $X_{k}$ for each $k$ in $\nat$ with $N_{0}\le k$   there exists   $R_k' $ in $ \cC^G(X_k)$ such that the composition of $\tilde f$ with the projection \[E(\coprod^{\free}_{N_{0}\le n} \cO(P_{(R_n)_{Y}}(Y_n))) \to E(\cO(P_{(R_k)_{Y}}(Y_k)))\to E(\cO(P_{(R_k')_{Y}}(Y_k)))\]
 is equivalent to zero, where the first morphism is the projection onto the $k$th summand.
   By weak additivity of $E$, it follows that the map \[K \xrightarrow{\tilde f} E(\coprod^{\free}_{N_{0}\le n} \cO(P_{(R_n)_{Y}}(Y_n))) \to E(\coprod^{\free}_{N_{0}\le n} \cO(P_{(R_n')_{Y}}(Y_n)))\] is also trivial. 
   This implies that $f$ is equivalent to zero. \end{proof}
  
Let $X$  be a $G$-bornological space with bornology $\cB$, $Y$ be a $G$-set and $f:Y\to X$ be an equivariant map of sets. 
 \begin{ddd}
The \emph{induced bornology} $f^{-1}\cB$ on $Y$ is defined to be  the minimal bornology on $Y$ such that the map $f$ is proper. 
 \end{ddd}

Let $X$ be a $G$-bornological coarse space and $(U_{i})_{i\in I}$ be an equivariant family of subsets. 

\begin{ddd}\label{eoifoewfewfewfwe}
We define the $G$-bornological coarse space
$\coprod_{i\in I}^{\sub}U_{i}$ as follows:
\begin{enumerate}
\item The underlying $G$-set is $\coprod_{i\in I} U_{i}$.
\item The coarse structure is the one defined in \cref{ioefoweifwefwef}.
\item The bornology is induced from $X$ via  the canonical map $\coprod_{i\in I} U_{i}\to X$.\qedhere
\end{enumerate}
   \end{ddd}

  \begin{rem}
  In the situation of \cref{eoifoewfewfewfwe} we have a morphism of $G$-bornological coarse spaces $\coprod_{i\in I}^{\sub}U_{i}\to X$. The bornology on the domain of that map is induced from $X$, but the coarse structure is in general smaller than the induced coarse structure. 
  \end{rem}

We consider a class $\FP$ of sequences of $G$-bornological coarse spaces.
Let $(X_{n})$ be a sequence of $G$-bornological coarse spaces.   

\begin{ddd}
	\label{def:strongdec}
The sequence $(X_{n})$ 
  is \emph{decomposable over $\FP$}  if for every sequence $(T_n)$ in $
     \prod_{n\in \nat}\cC^G_{n} $  and  for  all natural numbers $n$  there exist pairwise $T_{n}$-disjoint and nice equivariant families $\cU^{T_{n}}:=(U_{i}^{T_{n}})_{i\in I_{n}}$ and $\cV^{T_{n}}:=(V^{T_{n}}_{i})_{j\in J_{n}}$ of subsets of $X$
     such that: 
     \begin{enumerate} 
   \item $X_{n}=U^{T_{n}}\cup V^{T_{n}}$ with
      $U^{T_n}:=\bigcup_{i\in I_n}U_i^{T_n}$ and $V^{T_n}:=\bigcup_{j\in J_n}V_j^{T_n}$.      
 \item  The sequences	of $G$-bornological coarse spaces \[\left(\coprod_{i\in I_n}^{\sub}U_i^{T_n}\right) \ , \quad \left(\coprod_{j\in J_n}^{\sub}V_j^{T_n}\right) \]
	belong to $\FP$.  \qedhere
	\end{enumerate}
 
\end{ddd}
Let $(X_{n})$ be a sequence of $G$-bornological coarse spaces and $(Y_{n})$ be a sequence of subspaces.
\begin{lem} \label{rioeoirgergergerg}
If $\FP$ is closed under taking sequences of subspaces and  the sequence $(X_{n})$ is decomposable over $\FP$, then    $(Y_{n})$ is decomposable over $\FP$.
\end{lem}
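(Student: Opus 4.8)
The plan is to prove the statement by transporting a decomposition of $(X_n)$ to $(Y_n)$ along the inclusions, intersecting every subset in sight with $Y_n$. Fix a sequence $(T_n)$ with $T_n$ an invariant entourage of $Y_n$. Since the coarse structure of $Y_n$ is induced from $X_n$, the analogue of \cref{foiejifoefff23f23} lets me write each $T_n$ as $\tilde T_n\cap(Y_n\times Y_n)$ for some invariant entourage $\tilde T_n$ of $X_n$; I choose such a lift and enlarge it so that it contains the diagonal. Applying the hypothesis that $(X_n)$ is decomposable over $\FP$ (see \cref{def:strongdec}) to the sequence $(\tilde T_n)$ produces, for every $n$, pairwise $\tilde T_n$-disjoint and nice equivariant families $(U_i^{\tilde T_n})_{i\in I_n}$ and $(V_j^{\tilde T_n})_{j\in J_n}$ of subsets of $X_n$ which cover $X_n$ and whose associated $\sub$-coproducts assemble into sequences lying in $\FP$.

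First I would set $\tilde U_i:=U_i^{\tilde T_n}\cap Y_n$ and $\tilde V_j:=V_j^{\tilde T_n}\cap Y_n$ and dispatch the three cheap conditions. These families are equivariant because $Y_n$ is invariant and intersection commutes with the $G$-action; they cover $Y_n$ because $X_n=U^{\tilde T_n}\cup V^{\tilde T_n}\supseteq Y_n$; and they are $T_n$-disjoint because $T_n\subseteq\tilde T_n$, $\tilde U_i\subseteq U_i^{\tilde T_n}$ and $\tilde V_j\subseteq V_j^{\tilde T_n}$, so that $T_n[\bigcup_i\tilde U_i]\cap\bigcup_j\tilde V_j\subseteq\tilde T_n[U^{\tilde T_n}]\cap V^{\tilde T_n}=\emptyset$ and symmetrically.

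Next I would secure the membership in $\FP$. The key observation is that $\coprod^{\sub}_{i\in I_n}\tilde U_i$ is canonically the subspace of $\coprod^{\sub}_{i\in I_n}U_i^{\tilde T_n}$ carried by the invariant subset $\coprod_{i\in I_n}\tilde U_i$: the $\sub$-coarse structure of \cref{eoifoewfewfewfwe} is built uniformly from the induced coarse structure and the diagonal blocks $U_i\times U_i$, and since the coarse and bornological structures of $Y_n$ are themselves induced from $X_n$, restricting first and then forming the $\sub$-coproduct agrees with forming the $\sub$-coproduct first and then restricting. Consequently $\big(\coprod^{\sub}_i\tilde U_i\big)$ and $\big(\coprod^{\sub}_j\tilde V_j\big)$ are sequences of subspaces of the sequences $\big(\coprod^{\sub}_iU_i^{\tilde T_n}\big)$ and $\big(\coprod^{\sub}_jV_j^{\tilde T_n}\big)$, which lie in $\FP$; as $\FP$ is closed under taking sequences of subspaces, both restricted sequences lie in $\FP$ as well.

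The main obstacle, by a wide margin, is niceness of the restricted families in $Y_n$. This step is genuinely delicate and is \emph{not} a formality: niceness need not pass to invariant subspaces, because the equivariant coarse retraction witnessing niceness of $(U_i^{\tilde T_n})$ in $X_n$ need not preserve $Y_n$, and thickening inside $Y_n$ is in general strictly smaller than the restriction to $Y_n$ of a thickening in $X_n$ (the equivariance of the comparison map near fixed points is exactly what can fail). I would therefore attack niceness directly rather than by naive inheritance: for an invariant diagonal-containing entourage $S$ of $Y_n$, write $S=\bar S\cap(Y_n\times Y_n)$ with $\bar S$ an invariant diagonal-containing entourage of $X_n$, express the thickening as $S[\tilde U_i]=\bar S[\tilde U_i]\cap Y_n$, and try to manufacture the required equivariant coarse equivalence $\coprod^{\sub}_i\tilde U_i\to\coprod^{\sub}_i S[\tilde U_i]$ out of the niceness equivalence for $(U_i^{\tilde T_n})$ together with the subspace identification of the previous paragraph. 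This is where the real work concentrates and where one must argue carefully about equivariance; once niceness of $(\tilde U_i)$ and $(\tilde V_j)$ is in hand, these families witness that $(Y_n)$ is decomposable over $\FP$, which completes the proof.
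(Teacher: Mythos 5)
Your construction is, step for step, the paper's own proof: the paper likewise regards the invariant entourages $T_{n}$ of $Y_{n}$ as invariant entourages of $X_{n}$ (coarse structures are closed under passing to subsets, so no lift $\tilde T_{n}$ is needed, though your lift-and-enlarge variant works equally well), intersects the decomposition families of $X_{n}$ with $Y_{n}$, proves exactly your ``key observation'' that the $\sub$-coproduct of the restricted families formed with the structures of $Y_{n}$ is isomorphic to the one formed with the structures of $X_{n}$ --- so that $\big(\coprod^{\sub}_{i\in I_{n}}(U^{T_{n}}_{i}\cap Y_{n})\big)$ is a sequence of subspaces of $\big(\coprod^{\sub}_{i\in I_{n}}U^{T_{n}}_{i}\big)$ --- and concludes by closure of $\FP$ under sequences of subspaces. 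Everything you actually carry out (equivariance, covering, $T_{n}$-disjointness, the subspace identification, membership in $\FP$) is correct and coincides with the paper's argument.

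The one place where your write-up is not a proof is the niceness condition required by \cref{def:strongdec}: your last paragraph announces an attack (``try to manufacture the required equivariant coarse equivalence'') but does not carry it out, so as a standalone argument the proposal is incomplete precisely there. You should know two things. First, you have not missed an idea that the paper supplies: the paper's proof verifies only the sub-coproduct isomorphism and the $\FP$-membership and is completely silent about niceness of the restricted families, so on this point your text and the paper's have the same mathematical content. Second, your suspicion that niceness is not inherited by naive restriction is substantively correct, and for the reason you give: an equivariant map must send $G$-fixed points to $G$-fixed points, while the retraction witnessing niceness in $X_{n}$ may move points of $Y_{n}$ out of $Y_{n}$. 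Concretely, take $G=\IZ/2$ acting on the bounded four-point space $X=\{a_{+},a_{-},b,u\}$ (maximal coarse structure and bornology) by swapping $a_{+}\leftrightarrow a_{-}$ and fixing $b$ and $u$; the one-member family $U_{1}=\{a_{+},a_{-},u\}$ is nice in $X$ (send $b\mapsto u$), but for the invariant subset $Y=\{a_{+},a_{-},b\}$ the restricted family $U_{1}\cap Y=\{a_{+},a_{-}\}$ is not nice in $Y$, since there is no equivariant map from $Y$ to $\{a_{+},a_{-}\}$ at all ($b$ would have to go to a fixed point). So the step you isolate cannot be closed by formal inheritance from the niceness of $(U^{T_{n}}_{i})_{i\in I_{n}}$; whatever fills it in --- in your proof or in the paper's --- must use an additional argument or hypothesis. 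In short: a genuine gap, but one your proposal shares with, rather than falls behind, the paper's own proof.
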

\begin{proof}
We use the notation appearing in \cref{def:strongdec}. For every natural number $n$ we can interpret  invariant entourages $T_{n}$ of $Y_{n}$ as invariant entourages of $X_{n}$.
We set
\[U^{T_{n}}_{Y,i}:=U_{i}^{T_{n}}\cap Y_{n}\ , \quad V^{T_{n}}_{Y,j}:=V^{T_{n}}_{j}\cap Y\ .\] In the following 
we use the notation $\cU^{T_{n}}_{Y}:=(U^{T_{n}}_{Y,i})_{i\in I}$. We   observe by  an inspection of the definitions  that we have an isomorphism
\[\coprod_{i\in I_n}^{\sub_{Y }}U_{Y,i}^{T_n} \cong \coprod_{i\in I_n}^{\sub_{X }}U_{Y,i}^{T_n}\]
of $G$-bornological coarse spaces. Here
  the subscript $Y $ or $X$ at the $\sub$-symbol indicates that the coarse structures are generated 
by the entourages $\sub^{\cU^{T_{n}}_{Y}}((S_{n})_{Y})$  (or $\sub^{\cU_{Y}^{T_{n}}}(S_{n})$, respectively)  for all entourages $S_{n}$ of $X_{n}$, and that the bornologies are induced from the bornologies of $Y_{n}$ (or $X_{n}$, respectively).
We conclude that
$\left(\coprod_{i\in I_n}^{\sub_{Y}}U_{Y,i}^{T_n}\right)$ is a sequence of subspaces of the sequence  $\left(\coprod_{i\in I_n}^{\sub_{X}}U_{i}^{T_n}\right)$ and hence belongs to $\FP$ by assumption.
 
 A similar reasoning applies to the $V$-sequences.
\end{proof}

Recall that $\cV_{E}$ and $\VP_{E}$ denote the classes of $E$-vanishing $G$-bornological coarse spaces (\cref{opwefwefewfwf}) and of $E$-vanishing sequences of $G$-bornological coarse spaces (\cref{roigrgegg}).

Let $X$ be a $G$-bornological coarse space with coarse structure $\cC$,
and let $\underline{X}$ be the corresponding  constant sequence of $G$-coarse spaces.

\begin{kor}\label{cor:constantdecomposableseq}
If $X$ is decomposable over $\cV_{E}$, then the constant sequence $\underline{X} $ is decomposable over $\VP_{E}$.
\end{kor}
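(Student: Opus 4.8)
The plan is to verify the two conditions of \cref{def:strongdec} for the constant sequence $\underline{X}$ directly, deducing everything from the hypothesis that $X$ is decomposable over $\cV_{E}$ together with \cref{lem:componentvanishing-sequencevanishing}. Since this is stated as a corollary, I expect the argument to be essentially formal.

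First I would fix an arbitrary sequence $(T_{n})$ in $\prod_{n\in\nat}\cC^{G}$. Because $\underline{X}$ is constant, each $T_{n}$ is simply an invariant entourage of $X$, so for each $n$ I may apply the hypothesis that $X$ is decomposable over $\cV_{E}$ to the entourage $T_{n}$. This produces pairwise $T_{n}$-disjoint and nice equivariant families $(U_{i}^{T_{n}})_{i\in I_{n}}$ and $(V_{j}^{T_{n}})_{j\in J_{n}}$ of subsets of $X$ with $X=U^{T_{n}}\cup V^{T_{n}}$ and such that the $G$-bornological coarse spaces $\coprod_{i\in I_{n}}^{\sub}U_{i}^{T_{n}}$ and $\coprod_{j\in J_{n}}^{\sub}V_{j}^{T_{n}}$ belong to $\cV_{E}$. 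The disjointness, niceness, equivariance and covering conditions are then inherited termwise, so the first condition of \cref{def:strongdec} holds verbatim and only the second remains.

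For the second condition I must show that the sequences $\big(\coprod_{i\in I_{n}}^{\sub}U_{i}^{T_{n}}\big)_{n}$ and $\big(\coprod_{j\in J_{n}}^{\sub}V_{j}^{T_{n}}\big)_{n}$ belong to $\VP_{E}$. By construction each of their terms lies in $\cV_{E}$, i.e.\ is an $E$-vanishing $G$-bornological coarse space in the sense of \cref{opwefwefewfwf}. Hence \cref{lem:componentvanishing-sequencevanishing} applies and shows that each of these two sequences is $E$-vanishing, that is, belongs to $\VP_{E}$ (\cref{roigrgegg}). This is exactly the second condition of \cref{def:strongdec}, and therefore $\underline{X}$ is decomposable over $\VP_{E}$.

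The only genuine input is \cref{lem:componentvanishing-sequencevanishing}, which upgrades the pointwise memberships $\coprod^{\sub}U_{i}^{T_{n}}\in\cV_{E}$ supplied by the decomposition of $X$ to membership of the whole sequence in $\VP_{E}$; I do not anticipate any real obstacle here. The one bookkeeping point to confirm is that the $\coprod^{\sub}$-construction of \cref{eoifoewfewfewfwe} invoked in \cref{def:strongdec} agrees, for the constant sequence, with the one used in the decomposition of $X$. This is immediate since $X_{n}=X$ and $\cC_{n}=\cC$ for every $n$, so the induced coarse and bornological structures on $\coprod_{i\in I_{n}}U_{i}^{T_{n}}$ coincide in both places.
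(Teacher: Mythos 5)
Your proof is correct and follows essentially the same route as the paper's: fix $(T_n)$, apply the decomposability of $X$ over $\cV_E$ termwise to each $T_n$, and then invoke \cref{lem:componentvanishing-sequencevanishing} to upgrade the pointwise membership in $\cV_E$ to membership of the two sequences in $\VP_E$. Nothing is missing.
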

\begin{proof}
 Let $(T_n)$ in $\prod_{n\in \nat} \cC^{G} $ be given.  
 Since $X$ is decomposable over $\cV_{E}$,   for each natural number $n$  there exist
   pairwise $T_{n}$-disjoint nice equivariant   families $(U_{i}^{T_{n}})_{i\in I_{n}}$ and $(V^{T_{n}}_{i})_{i\in J_{n}}$ of subsets of $X$  such that
 $X = U^{T_{n}} \cup V^{T_{n}}$ with   \[U^{T_n}:=\bigcup_{i\in I_n}U_i^{T_n},\quad V^{T_n}:=\bigcup_{j\in J_n}V_j^{T_n}\ ,\] and such that the $G$-bornological coarse spaces 
 $\coprod_{i \in I_n}^{\sub} U^{T_{n}}_{i}$ and $\coprod_{j \in J_n}^{\sub} V^{T_{n}}_{j}$ are $E$-vanishing. By \cref{lem:componentvanishing-sequencevanishing}, both sequences of  $G$-bornological coarse spaces 
     $(\coprod_{i \in I_n}^{\sub} U^{T_{n}}_{i})$ and $(\coprod_{j \in J_n}^{\sub} V^{T_{n}}_{j})$ are $E$-vanishing sequences. We conclude that the sequence $\underline{X}$ 
    is decomposable over $\VP_{E}$.
\end{proof}

\subsection{Properties of the Rips complex}
\begin{rem}
As a preparation of what follows we recall the following conventions.

If $Y$ is a $G$-set with a $G$-invariant quasi-metric $d$, then for every $r$ in $\R$ we define the invariant entourage
\[U_{r}:=\{(x,y)\in Y\times Y\:|\: d(x,y)\le r\}\ .\]

The $G$-coarse structure $\cC_{d}$ on $Y$ induced by the metric is the coarse structure generated by the enourages $U_{r}$ for all $r$ in $\R$.

All this applies in particular to the quasi-metric $G$-space $P^{X}_{S}(Y)$ for a $G$-bornological coarse space $X$ and invariant subset $Y$ of $X$ and invariant entourage $S$ of $X$.
\end{rem}

Let $X$ be a $G$-bornological coarse space, $Y$ be an invariant subset of $X$, and $S$ be an invariant entourage of $X$ such that $\diag(X)\subseteq S$. Note that this implies that  $S^{k}\subseteq S^{k+1}$ for all integers $k$ and that $(S^{n}[Y])_{n\in \nat}$ is an increasing family of invariant subsets of $X$.
 
\begin{lem}\label{lem:ballsinripscomplexes}
	For every integer $k$ 
	we have the inclusion 
	\[U_{k-2}[P_S^X(Y)]\subseteq P_S^X(S^k[Y])\subseteq U_{k+1}[P_S^X(Y)]\ .\]  In particular, $(P_S^X(S^n[Y]))_{n\in \IN}$ is a big family in the $G$-bornological coarse space $P_{S}(X)$.
\end{lem}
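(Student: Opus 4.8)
The plan is to read both inclusions as a dictionary between two kinds of thickening: the metric thickenings $U_{r}[-]$ taken inside the Rips complex, and the coarse thickenings $S^{k}[-]$ taken at the level of $X$. The bridge between them is the combinatorial translation between the spherical path quasi-metric on $P_{S}^{X}(X)$ and edge-paths in its $1$-skeleton that is already exploited in the proof of \cref{lem:metricsonrips}. Concretely, I would first record two elementary facts about the spherical quasi-metric, normalised so that two vertices spanning a $1$-simplex are at distance $1$: first, every point $p$ lies within distance $\le 1$ of each vertex of its carrier, whose vertex set is exactly $\operatorname{supp}(p)$; second, if $d(p,q)\le r$, then there is a chain $x_{0},\dots,x_{m}$ of points of $X$ with $m\le\lceil r\rceil+1$, with $(x_{i},x_{i+1})\in S$ for all $i$, and with $x_{0}\in\operatorname{supp}(p)$ and $x_{m}\in\operatorname{supp}(q)$. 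Here consecutive $x_{i}$ span a $1$-simplex because $S$ is symmetric and contains the diagonal, so each step is an edge of length $\le 1$.

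For the second inclusion $P_{S}^{X}(S^{k}[Y])\subseteq U_{k+1}[P_{S}^{X}(Y)]$ I would take $p\in P_{S}^{X}(S^{k}[Y])$ and pick a vertex $x_{0}\in\operatorname{supp}(p)\subseteq S^{k}[Y]$. By definition of $S^{k}[Y]$ there is a chain $x_{0},x_{1},\dots,x_{m}$ with $m\le k$, $(x_{i},x_{i+1})\in S$, and $x_{m}\in Y$. The associated edge-path $\delta_{x_{0}},\dots,\delta_{x_{m}}$ in $P_{S}^{X}(X)$ has length $\le k$ and terminates in $\delta_{x_{m}}\in P_{S}^{X}(Y)$; together with the move $p\rightsquigarrow\delta_{x_{0}}$ of length $\le 1$ inside the carrier of $p$ this yields $d(p,\delta_{x_{m}})\le k+1$, whence $p\in U_{k+1}[P_{S}^{X}(Y)]$. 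For the first inclusion $U_{k-2}[P_{S}^{X}(Y)]\subseteq P_{S}^{X}(S^{k}[Y])$ I would take $p\in U_{k-2}[P_{S}^{X}(Y)]$, so there is $q\in P_{S}^{X}(Y)$ with $d(p,q)\le k-2$, and show $\operatorname{supp}(p)\subseteq S^{k}[Y]$. The second geometric fact produces a chain $x_{0},\dots,x_{m}$ with $m\le k-1$, consecutive entries $S$-related, $x_{0}\in\operatorname{supp}(p)$ and $x_{m}\in\operatorname{supp}(q)\subseteq Y$. Given an arbitrary vertex $x\in\operatorname{supp}(p)$, the set $\operatorname{supp}(p)$ is $S$-bounded, so $(x,x_{0})\in S$ and the prolonged chain $x,x_{0},\dots,x_{m}$ has length $\le k$ and ends in $Y$; hence $(x,x_{m})\in S^{k}$ and $x\in S^{k}[Y]$. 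As $x$ was arbitrary, $\operatorname{supp}(p)\subseteq S^{k}[Y]$, i.e. $p\in P_{S}^{X}(S^{k}[Y])$. The constants $-2$ and $+1$ are deliberately loose, so that an off-by-one in the combinatorial translation is harmless.

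Finally, for the \emph{in particular} I would deduce bigness of $(P_{S}^{X}(S^{n}[Y]))_{n\in\IN}$ in $P_{S}(X)$ from the two inclusions. The family is increasing since $\diag(X)\subseteq S$ gives $S^{n}[Y]\subseteq S^{n+1}[Y]$, hence filtered. Since the coarse structure of $P_{S}(X)$ is generated by the $U_{r}$, it suffices to fix $r$ and $n$ and produce $n'$ with $U_{r}[P_{S}^{X}(S^{n}[Y])]\subseteq P_{S}^{X}(S^{n'}[Y])$. Combining the second inclusion, the triangle inequality $U_{r}\circ U_{n+1}\subseteq U_{r+n+1}$, and the first inclusion with $k=n+\lceil r\rceil+3$ gives $U_{r}[P_{S}^{X}(S^{n}[Y])]\subseteq U_{r+n+1}[P_{S}^{X}(Y)]\subseteq P_{S}^{X}(S^{n+\lceil r\rceil+3}[Y])$, so $n'=n+\lceil r\rceil+3$ works.

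I expect the main obstacle to be the clean formulation and honest proof of the second geometric fact, namely the precise bound on the edge-path length extracted from a bound on the spherical distance. Passing between continuous spherical geodesics and discrete edge-paths requires analysing how a geodesic enters, crosses, and leaves each simplex it meets, and it is there that the exact additive constant is pinned down; everything after that translation is the bookkeeping carried out above, which the slack in the constants is designed to accommodate.
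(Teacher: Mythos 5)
Your proposal is correct and follows essentially the same route as the paper: identify the zero skeleton of $P_{S}(X)$ with $X$ (using $\diag(X)\subseteq S$), translate spherical distances into edge-paths/$S$-chains to get both inclusions with the same loose constants, and note that the carrier of a point is within distance $1$ of each of its vertices; the chain-extraction fact you flag as the main obstacle is exactly what the paper also invokes without further proof (there phrased as "shortest paths from a vertex run in the one-skeleton", as in the proof of \cref{lem:metricsonrips}). Your explicit verification of the "in particular" bigness claim, which the paper leaves implicit, is also correct.
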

\begin{proof} 
In the following argument we identify the zero skeleton of $P_{S}(X)$ with $X$. This is possible since  by assumption $S$ contains the diagonal.
 
Let $\mu$ be a point in $U_{k-2}[P_S^X(Y)]$. Then it is contained in some simplex. We choose   some vertex $x$ of that simplex. Then $x\in U_{k-1}[P_S^X(Y)]$. The shortest path which connects $x$ with a point in $P_S^X(Y)$ is contained in the one-skeleton. Hence we can conclude that $x\in S^{k-1}[Y]$. But then all vertices of the simplex containing $\mu$ are contained in $S^{k}[Y]$. Hence $\mu\in P_S^X(S^k[Y])$. 

Let now $\mu$ be a point in $P_S^X(S^k[Y])$.
We again choose a vertex $x$ of the simplex containing $\mu$. Then  the distance of $x$ from $P_{S}^{X}(Y)$ is  bounded by $k$. Hence the distance of $\mu$ from
$P_{S}^{X}(Y)$ is  bounded by  $k+1$, i.e., we have $\mu\in U_{k+1}[P_S^X(Y)]$.
\end{proof}

Let $X$ be a $G$-bornological coarse space and $\cV:=(V_{i})_{i\in I}$ be an equivariant pairwise disjoint family of subsets. Let $S$ and $T$ be invariant entourages of $X$ such that $S$ contains   the diagonal, and set $S^{\prime}:=S\cup \sub^{\cV}(T)$ (\cref{ioefoweifwefwef}).
  
Let $n$ be a natural number.

\begin{lem}\label{lem:distancesinrelativerips}

 If $(P_{S}^{X}(V_{i}))_{i\in I}$ is $U_{n}$-disjoint in $P^{X}_{S}(X)$, then  
$(P_{S^{\prime}}^{X}(V_{i}))_{i\in I}$ is $U_{n-2}$-disjoint in $P^{X}_{S^{\prime}}(X)$.
	 
\end{lem}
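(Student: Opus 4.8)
The plan is to argue by contradiction, exploiting that $S'$ differs from $S$ only \emph{within} the individual members of the family $\cV$. Throughout I identify the zero-skeleton of each Rips complex with $X$, as is done in the proof of \cref{lem:ballsinripscomplexes}.

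First I would reduce the assertion about the spherical quasi-metric to a purely combinatorial statement about edge-chains in the one-skeleton. Assume $(P_{S'}^X(V_i))_{i \in I}$ is not $U_{n-2}$-disjoint in $P_{S'}^X(X)$. Then there are indices $i \neq j$ and points $\mu \in P_{S'}^X(V_i)$, $\nu \in P_{S'}^X(V_j)$ with $d(\mu,\nu) \le n-2$ for the quasi-metric $d$ of $P_{S'}^X(X)$. Pick a vertex $x$ of the simplex supporting $\mu$ and a vertex $y$ of the simplex supporting $\nu$. Then $x \in V_i$ and $y \in V_j$, and, as in the proof of \cref{lem:ballsinripscomplexes}, passing from an interior point of a simplex to one of its vertices changes distances by at most $1$, so $d(x,\mu) \le 1$ and $d(\nu,y) \le 1$. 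The triangle inequality then gives $d(x,y) \le 1 + (n-2) + 1 = n$. By the chain description of the quasi-metric used in the proof of \cref{lem:metricsonrips}, a distance $\le n$ between the vertices $x$ and $y$ produces a chain $x = z_0, z_1, \dots, z_m = y$ with $m \le n$ and $(z_l, z_{l+1}) \in S'$ for all $l$. It therefore suffices to derive a contradiction from the existence of such an $S'$-chain of length at most $n$ joining a point of $V_i$ to a point of $V_j$.

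The decisive structural observation is that $S' \setminus S \subseteq \sub^{\cV}(T) \subseteq U(\cV) = \bigsqcup_{k \in I} V_k \times V_k$, so every pair in $S' \setminus S$ has both entries in one common member $V_k$; here I use that the family is pairwise disjoint, which makes this $k$ unique. Hence any edge $(z_l, z_{l+1})$ of the chain whose two endpoints do \emph{not} lie in a common member $V_k$ automatically belongs to $S$. I then isolate the first genuine change of member: let $a$ be the largest index with $z_a \in V_i$, which satisfies $a < m$ since $y \in V_j$ and $V_i \cap V_j = \emptyset$; and let $b$ be the smallest index with $a < b$ and $z_b \in V_k$ for some $k \neq i$, which exists because $z_m = y \in V_j$. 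By the choice of $a$ and $b$, the intermediate vertices $z_{a+1}, \dots, z_{b-1}$ lie in no member of $\cV$ at all, so no edge of the subchain $z_a, z_{a+1}, \dots, z_b$ has both endpoints in a common member; by the previous observation each such edge lies in $S$.

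Consequently $z_a, \dots, z_b$ is an $S$-chain of length $b - a \le m \le n$, so $d(z_a, z_b) \le n$ in $P_S^X(X)$, where $z_a \in P_S^X(V_i)$ and $z_b \in P_S^X(V_k)$ with $k \neq i$. This contradicts the hypothesis that $(P_S^X(V_i))_{i \in I}$ is $U_n$-disjoint in $P_S^X(X)$, and the symmetric half of the $U_{n-2}$-disjointness follows by interchanging $i$ and $j$. The step I expect to require the most care is the combinatorial extraction itself: one must bookkeep the loss of exactly $2$ from passing between interior points and vertices against the conventions for the spherical quasi-metric recorded in the proofs of \cref{lem:ballsinripscomplexes,lem:metricsonrips}, and one must apply the disjointness hypothesis to the pair $(V_i, V_k)$ for the possibly intermediate member $V_k$ reached at the first change of member, rather than only to $(V_i, V_j)$.
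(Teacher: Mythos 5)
Your proof is correct and follows the same overall strategy as the paper's: reduce the statement about points of the Rips complexes to one about the vertex sets $V_{i}$ (at the cost of the $\pm 2$ coming from moving points to vertices of their supporting simplices), and then extract from an $S^{\prime}$-chain joining two distinct members an $S$-subchain joining two distinct members, using that $S^{\prime}\setminus S\subseteq \sub^{\cV}(T)\subseteq \bigsqcup_{k\in I}V_{k}\times V_{k}$; this contradicts the $U_{n}$-disjointness hypothesis in $P^{X}_{S}(X)$. The one genuine difference is how the subchain is selected, and your choice is the more careful one. The paper takes $k_{0}$ to be the \emph{first} index at which the chain leaves $V_{i}$ and then lets $k_{1}$ be maximal so that the edges in between avoid $\bigcup_{l\in I}(T\cap(V_{l}\times V_{l}))$; it concludes that $x_{k_{1}}\in V_{l}$ yields a contradiction, but as written nothing excludes $l=i$ (the chain may re-enter $V_{i}$ through $S$-edges before the first $\sub^{\cV}(T)$-edge occurs), in which case there is no contradiction and the argument would have to be iterated. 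Your choice --- $a$ the \emph{last} index lying in $V_{i}$, and $b$ the first subsequent index lying in some $V_{k}$ with $k\neq i$ --- forces all intermediate vertices to lie in no member of $\cV$ at all, so every edge of the subchain is an $S$-edge and the terminal member is automatically distinct from $V_{i}$; applying the disjointness hypothesis to the pair $(V_{i},V_{k})$, exactly as you do, then gives the contradiction directly. Your bookkeeping of the constants ($\le n-2$ between interior points, hence $\le n$ between vertices, against $U_{n}$-disjointness) is consistent with the conventions the paper itself uses in the proofs of \cref{lem:ballsinripscomplexes} and \cref{lem:metricsonrips}.
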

\begin{proof}
Since $S^{\prime}$ contains the diagonal
 we can identify $X$ with the zero skeleton of the Rips complex  $P^{X}_{S^{\prime}}(X)$.  
Since every point in   the Rips complex   has distance at most one to a point in the 0-skeleton, it suffices to show that the family $(V_{i})_{i\in I}$ is $U_{n}$-disjoint in $P^{X}_{S^{\prime}}(X)$.

We argue by contradiction.	
Let $d$ denote the distance in $	P_{S^{\prime}}(X)$. Assume that $i$ and $j$ belong to $I$ such that $i\not= j$ and $d(V_{ i},V_{j })<n$. Then there exists a sequence $(x_{k})_{k=0,\dots,n-1}$ in $X$
with $x_0\in V_{i }$, $x_{n-1}\in V_{j }$, and $(x_k,x_{k+1})\in S'$ for all $k$ in $ \{0,\dots,n-2\}$. 

We let  $k_{0}$ in $  \{0,\dots,n-1\}$ be the minimal element such that   $x_{k_{0}} \in V_{i}$ and $x_{k_{0}+1} \notin V_{i }$. Such an element $k_{0}$ exists since $V_{i}\cap V_{j}=\emptyset$.

We  let  $k_{1} $ in  $\{0,\dots,n-1\}$ be the  maximal element such that for all $m$
in  $ \{k_{0},\dots,k_{1}-1\}$ we have $(x_{m},x_{m+1})\not\in \bigcup_{i\in I}(T\cap (V_i\times V_i))$.  We have $k_{0}<k_{1}\le n-1$ again since the family $(V_{i})_{i\in I}$ is pairwise disjoint. There exists $l$ in $I$ such that $x_{k_{1}}\in V_{l  }$.
Hence $d(V_{i },V_{l  })\le k_{1}-k_{0}\le n-1$.
This contradicts the assumption that the family $(V_{i })_{i\in I}$ is $U_{n}$-disjoint.
\end{proof}

Let $X$ be a $G$-bornological coarse space, let $U$ be an invariant subspace and let $k$ be a natural number. We consider again invariant entourages $S$ and $T$ of $X$ containing the diagonal  and form the invariant entourage 
\[S':=S\cup (T\cap (S^k[U]\times S^k[U]))\ .\]

 \begin{lem}\label{iuhgeirughergiregergergregregergrer}
 We have  $(S')^m[U]\subseteq S^{k+m}[U]$.
\end{lem}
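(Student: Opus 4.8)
The plan is to argue by induction on $m$, exploiting that the ``new'' part of $S'$ can only act on points that already lie in $S^k[U]$ and can only move them to points that again lie in $S^k[U]$. To this end I would write $W := T \cap (S^k[U]\times S^k[U])$, so that $S' = S \cup W$ and hence, for every subset $A\subseteq X$, we have the decomposition $S'[A] = S[A]\cup W[A]$. The crucial elementary observation is that $W[A]\subseteq S^k[U]$ for \emph{every} $A$: by construction every pair belonging to $W$ has its second coordinate in $S^k[U]$, so the $W$-thickening of any set lands inside $S^k[U]$. This is the whole point of intersecting $T$ with $S^k[U]\times S^k[U]$ in the definition of $S'$, and it is what keeps the powers of $S'$ from escaping the prescribed $S$-neighbourhoods of $U$.

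For the base case $m=0$ we have $(S')^0[U]=U\subseteq S^k[U]$, using that $\diag\subseteq S$ forces $U=S^0[U]\subseteq S^k[U]$. For the inductive step I would assume $(S')^m[U]\subseteq S^{k+m}[U]$ and then use monotonicity of the thickening operation together with the decomposition above:
\[(S')^{m+1}[U] = S'[(S')^{m}[U]] \subseteq S'[S^{k+m}[U]] = S[S^{k+m}[U]] \cup W[S^{k+m}[U]].\]
Here the first summand equals $S^{k+m+1}[U]$, while the second summand is contained in $S^k[U]$ by the key observation, and $S^k[U]\subseteq S^{k+m+1}[U]$ because $\diag\subseteq S$ yields $S^{j}\subseteq S^{j+1}$ and hence $S^{k}\subseteq S^{k+m+1}$. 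Combining the two summands gives $(S')^{m+1}[U]\subseteq S^{k+m+1}[U]$, which closes the induction.

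I do not expect a serious obstacle here: the argument is a clean induction, and the only thing that needs care is the observation that $W$ never transports a point outside $S^k[U]$, which makes the second summand harmless. The remaining ingredients are the standard bookkeeping facts that the thickening is monotone, that $(S\cup W)[A]=S[A]\cup W[A]$, that $S[S^{j}[U]]=S^{j+1}[U]$, and that $\diag\subseteq S$ makes the powers of $S$ increasing in the exponent.
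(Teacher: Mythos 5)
Your proof is correct, and it rests on the same geometric fact as the paper's proof --- every pair in the extra part $W := T \cap (S^k[U]\times S^k[U])$ of $S'$ has both coordinates in $S^k[U]$, so a $W$-step can never leave $S^k[U]$ --- but you package it differently. The paper argues directly on an $S'$-chain $(x_0,\dots,x_m)$ from $U$ to a given point $x$ of $(S')^m[U]$: it takes $l$ maximal in $\{0,\dots,m-1\}$ with $x_l\in S^k[U]$, observes that all steps after index $l$ must then lie in $S$, and concludes $x\in S^{m-l}[S^k[U]]\subseteq S^{m+k}[U]$. You instead run an induction on $m$, using the decomposition $S'[A]=S[A]\cup W[A]$ together with the observation that $W[A]\subseteq S^k[U]$ for \emph{every} $A$. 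The two arguments are of comparable length; your inductive formulation has the mild advantage of automatically absorbing the small case distinction (is the step leaving $x_l$ an $S$-step or a $W$-step?) that the paper's compressed chain argument leaves implicit, whereas the paper's version makes visible the chain structure that is reused in the neighbouring \cref{lem:distancesinrelativerips}. One cosmetic remark: you justify the key observation by saying that pairs in $W$ have their \emph{second} coordinate in $S^k[U]$; since $W\subseteq S^k[U]\times S^k[U]$, in fact both coordinates lie there, so the claim $W[A]\subseteq S^k[U]$ holds under either convention for the thickening $W[A]$, and no care about orientation is actually needed.
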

\begin{proof}
We consider a point $x$ in $(S')^m[U]$. Then there exist a sequence 
$(x_0,\ldots,x_m)$ in $X$ with $x_0\in U$, $x_m=x$ and $(x_i,x_{i+1})\in S'$ for all $i$ in $\{0,\dots,m-1\}$. Let $l$ in $ \{0,\dots,m-1\}$ be maximal with $x_l\in S^k[U]$. By definition of $S'$ we have $(x_i,x_{i+1})\in S$ for all  $i$ in $\{l+1,\dots,m-1\}$. Thus $x\in S^{m-l}[S^k[U]]\subseteq S^{m+k}[U]$.
\end{proof}

\subsection{Vanishing is closed under decomposition}

Recall that $\VP_{E}$ denotes the class of $E$-vanishing sequences (\cref{roigrgegg}).
\begin{theorem}
	\label{thm:dec}
	The class $\VP_{E}$ is closed under decomposition.
\end{theorem}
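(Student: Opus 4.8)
The plan is to reduce the statement to the sequence $(X_{n})$ itself and then to run a Mayer--Vietoris argument at a controlled scale, mirroring the compactness bookkeeping of \cref{lem:componentvanishing-sequencevanishing} and \cref{lem:constantseqvanishing-componentvanishing}. First I would observe that $\VP_{E}$ is closed under passing to sequences of subspaces: this is immediate from \cref{roigrgegg}, since a subspace sequence of a subspace sequence is again one. Hence, by \cref{rioeoirgergergerg}, if $(X_{n})$ is decomposable over $\VP_{E}$ then so is every sequence of subspaces $(Y_{n})$, and it suffices to prove the implication that \emph{any} sequence decomposable over $\VP_{E}$ has $\bC^{\infty}((X_{n}))$ a phantom object. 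By \cref{ef988u9u89423r} I may equally work with $\bC^{\infty}_{\free}$. Fixing a compact object $K$ and a morphism $\phi\colon K\to\bC^{\infty}((X_{n}))$, I would use compactness of $K$, the $u$-continuity of $E$ and the presentation of the semi-union as a colimit over the propagation bound (\cref{regeriogregregerg}) to factor $\phi$ through $E(\coprod^{\semi(d)}_{N_{0}\le n}\cO(P_{S_{n}}(X_{n})))$ for some $N_{0}$, some $(S_{n})\in\prod_{n}\cC^{G}_{n}$ and some propagation bound $d$.

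Next I would bring in the decomposition hypothesis. Choosing $(T_{n})$ with $S_{n}^{m}\subseteq T_{n}$ for $m$ larger than $d$ plus the constants appearing in \cref{lem:distancesinrelativerips} and \cref{lem:metricsonrips}, \cref{def:strongdec} provides $T_{n}$-disjoint nice families $(U^{T_{n}}_{i})_{i}$ and $(V^{T_{n}}_{j})_{j}$ with $X_{n}=U^{T_{n}}\cup V^{T_{n}}$ and with both sub-union sequences lying in $\VP_{E}$. The geometric core is that $T_{n}$-disjointness forces large separation in the Rips complex: combining \cref{iuhgeirughergiregergergregregergrer}, \cref{lem:distancesinrelativerips} and \cref{lem:ballsinripscomplexes}, the thickened families $(S_{n}[U^{T_{n}}_{i}])_{i}$ are $U_{m'}$-disjoint in $P_{S_{n}}(X_{n})$ with $m'>d$. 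Therefore no entourage of propagation at most $d$ in the cone can connect cone-points lying over distinct members of the family, so at bounded propagation $\cO(P_{S_{n}}(S_{n}[U^{T_{n}}]))$ agrees with the semi-union $\coprod^{\semi}_{i}\cO(P_{S_{n}}(S_{n}[U^{T_{n}}_{i}]))$. By \cref{lem:metricsonrips} and \cref{lem:comparison} its contribution to the colimit is precisely $\bC^{\infty}$ of the sub-union sequence $(\coprod^{\sub}_{i}S_{n}[U^{T_{n}}_{i}])_{n}$, which niceness identifies with $(\coprod^{\sub}_{i}U^{T_{n}}_{i})_{n}\in\VP_{E}$. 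Thus the $U$-contribution, and symmetrically the $V$-contribution, is a phantom object.

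Setting $A_{n}:=P_{S_{n}}(S_{n}[U^{T_{n}}])$ and $B_{n}:=P_{S_{n}}(S_{n}[V^{T_{n}}])$ yields a coarsely excisive decomposition of $P_{S_{n}}(X_{n})$: it covers $P_{S_{n}}(X_{n})$ since $X_{n}=U^{T_{n}}\cup V^{T_{n}}$ and every simplex lies in the $S_{n}$-thickening of a single part, and the relevant thickenings form big families by \cref{lem:ballsinripscomplexes}. Excision for $E$ then gives, after forming the semi-union over $n$ and passing to the colimit, a Mayer--Vietoris fibre sequence relating $\bC^{\infty}((X_{n}))$ to the $U$- and $V$-contributions and to the intersection term $E(\cO(A_{n}\cap B_{n}))$, where $A_{n}\cap B_{n}=P_{S_{n}}(S_{n}[U^{T_{n}}]\cap S_{n}[V^{T_{n}}])$. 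To control the intersection I would group by $i$: with $W_{i}:=S_{n}[U^{T_{n}}_{i}]\cap S_{n}[V^{T_{n}}]\subseteq S_{n}[U^{T_{n}}_{i}]$ one has $S_{n}[U^{T_{n}}]\cap S_{n}[V^{T_{n}}]=\bigcup_{i}W_{i}$, and since the $(S_{n}[U^{T_{n}}_{i}])_{i}$ are disjoint and indexed by the same set, $\coprod^{\sub}_{i}W_{i}$ is a genuine subspace of $\coprod^{\sub}_{i}S_{n}[U^{T_{n}}_{i}]$ carrying the induced structure. As $\VP_{E}$ is closed under subspaces, the intersection contribution is again a phantom object. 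Since phantom objects are closed under fibre sequences --- $\Map_{\bC}(K,-)$ is exact and the fibre of $\ast\to\ast$ is $\ast$ --- it follows that $\bC^{\infty}((X_{n}))$ is phantom, and threading this through the compactness factorization of the first paragraph gives $\phi\simeq 0$.

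The hard part will be the scale bookkeeping in combination with the intersection term. Because the decomposition entourage $T_{n}$ must be chosen \emph{after} the Rips scale $(S_{n})$ and the propagation bound $d$ extracted from $\phi$, the families, and hence $A_{n}$, $B_{n}$ and their intersection, are not defined uniformly over the defining colimit of $\bC^{\infty}$; the Mayer--Vietoris sequence cannot simply be taken ``in the colimit'' but must be threaded through the compactness factorization exactly as in \cref{lem:componentvanishing-sequencevanishing} and \cref{lem:constantseqvanishing-componentvanishing}. The two delicate verifications are that at propagation at most $d$ the excisive decomposition genuinely splits the $U$- and $V$-parts into the semi-unions over their index sets, and that the regrouped family $(W_{i})_{i}$ carries the structure \emph{induced} from $\coprod^{\sub}_{i}S_{n}[U^{T_{n}}_{i}]$ rather than a strictly finer one; both rely on the disjointness estimates of \cref{lem:distancesinrelativerips} and \cref{iuhgeirughergiregergergregregergrer} together with niceness of the families.
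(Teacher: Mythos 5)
Your outline reproduces the paper's strategy: the same reduction via closure of $\VP_{E}$ under subspace sequences and \cref{rioeoirgergergerg}, the compactness factorization through a semi-union at finite propagation, a Mayer--Vietoris square built from the big families of \cref{lem:ballsinripscomplexes}, and the identification of the $U$-, $V$- and intersection terms with sub-union sequences so as to invoke the vanishing hypothesis (via niceness and \cref{lem:vanishingiscoarselyinvariant}). But there is a genuine gap at exactly the point you defer to ``scale bookkeeping'', and it is not bookkeeping: your choice of decomposition scale fails. You fix $(T_{n})$ with $S_{n}^{m}\subseteq T_{n}$ for a \emph{single} exponent $m$ determined by the propagation bound $d$ and the constants of \cref{lem:distancesinrelativerips}. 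However, excision forces the Mayer--Vietoris square to involve the colimits over the thickening parameter $k$ of the big families $(S_{n}^{k}[U_{n}])_{k}$ and $(S_{n}^{k}[V_{n}])_{k}$ (a single fixed thickening is not a coarsely excisive pair for the hybrid structure on the cone; this is why the paper's square \eqref{eq:decompositionpushout} has colimits over $k$ in three of its corners), and the boundary composite $\partial\circ\tilde\phi$ then factors, by compactness of $K$, through some level $k$ that is produced \emph{after} the square is set up and cannot be bounded in advance by $m$. To identify the level-$k$ intersection term with a sub-union --- the step needed to use that the sub-union sequences are $E$-vanishing --- you need the families $(S_{n}^{k}[U_{n,i}])_{i}$ to be $S_{n}$-disjoint and their Rips complexes to be disjoint at distance exceeding $d$, which requires disjointness of the original families at scale roughly $S_{n}^{2k+1}$; a fixed $m$ cannot supply this. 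The paper's resolution, and the one idea missing from your proposal, is to choose the families $S_{n}^{\,n}$-disjoint (the paper stresses that the $n$th power is important): the disjointness scale grows along the sequence, so for every fixed $k$ the needed separations hold for all $n\ge N_{1}(k)$, and the colimit over the starting index $N$ in the definition \eqref{gerg43r4r3} of $\bC^{\infty}$ discards the finitely many indices where they fail.

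A second, related gap: your conclusion that $\bC^{\infty}((X_{n}))$ is phantom ``because phantom objects are closed under fibre sequences'' presupposes a fibre sequence relating $\bC^{\infty}((X_{n}))$ to globally defined $U$-, $V$- and intersection objects. No such sequence exists, precisely because the decomposition depends on the scale $(S_{n})$ extracted from $\phi$ --- a point you concede in your final paragraph, which contradicts the way your second paragraph closes. The correct ending is local, as in the paper's \cref{lem:1} and \cref{lem:2}: one shows that the boundary term of the given class dies after enlarging the scale to $(r',(S'_{n}))$, lifts the resulting map through the Mayer--Vietoris sequence to the $U\oplus V$ part, kills that part after a further enlargement to $(r'',(S''_{n}))$, and concludes that the specific composite \eqref{f349u0ff34f3f} is null, whence $\phi\simeq 0$ in the colimit.
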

This subsection is devoted to the proof of this theorem.

Suppose that the sequence $(X_n)$ is a sequence of $G$-bornological coarse spaces which is decomposable over $\VP_{E}$ (see \cref{def:strongdec}). In view of \cref{roigrgegg} we  have to show  for every sequence $(Y_n)$ of $G$-invariant subsets that $\bC^{\infty}((Y_{n}))$ defined in \eqref{gerg43r4r3}
is a phantom object.

It immediately follows from   \cref{roigrgegg}  that the class
 $\VP_{E}$ is closed under taking sequences of $G$-invariant subspaces. By \cref{rioeoirgergergerg} every sequence of $G$-invariant subspaces $(Y_n)$ is also decomposable over $\VP_{E}$.
So we must see that the decomposability of the sequence $(Y_{n})$ implies that it is $E$-vanishing.
 
  It therefore suffices to show that $\bC^{\infty}((X_{n}))$ is a phantom.   Note that  \[
\bC^{\infty}((X_{n}))  \simeq \colim_{r\in (0,\infty)} \colim_{N\in \nat} \colim_{(S_{n})\in \prod_{n\in \nat }\cC_{n}^{G}}E(\coprod_{ N\le n }^{\semi(r)}\cO(P_{S_n}(X_n)))\ .\]

Let $K$ be a compact  object in $\bC$ and 
\[f:K\to \colim_{r\in (0,\infty)} \colim_{N\in \nat} \colim_{(S_{n})\in \prod_{n\in \nat }\cC_{n}^{G}}E(\coprod_{ N\le n }^{\semi(r)}\cO(P_{S_n}(X_n)))\]
be a morphism. By compactness of $K$ it factorizes over a morphism  \[\tilde f:K\to   \colim_{N\in \nat} E(\coprod_{ N\le n }^{\semi(r)}\cO(P_{S_n}(X_n)))\]
for some real number $r$, and a sequence $(S_{n}) $ (we will assume that  $S_{n}$ contains the diagonal of $X_{n}$ for every integer $n$) in $ \prod_{ n\in \nat }\cC_{n}^{G}$. It suffices to show that there exists  a real number $r^{\prime\prime}$ with $r\le r^{\prime\prime}$, and a sequence
$(S^{\prime\prime}_{n}) $ in $ \prod_{ n\in \nat }\cC_{n}^{G}$ with $(S_{n}) \le (S_{n}^{\prime\prime})$ such that the induced (by the inclusion of Rips complexes) map \begin{equation}\label{f349u0ff34f3f}
\tilde f^{\prime\prime}:K\to     \colim_{N\in \nat} E(\coprod_{ N \le n }^{\semi(r^{\prime\prime})}\cO(P_{S^{\prime\prime}_n}(X_n)))
\end{equation}
  is equivalent to zero.
 
Let $(U_{n})$ and $(V_{n})$ be sequences of invariant subsets of the sequence $(X_{n})$ such that for every natural number $n$ we have $X_{n}=U_{n}\cup V_{n}$.
Using the second assertion of \cref{lem:ballsinripscomplexes} and excision for $E$ we obtain a pushout:
\begin{equation}\label{eq:decompositionpushout}
\hspace{-1cm} \xymatrix{
	\colim\limits_{N,k\in \nat} E(\coprod\limits_{ N\le n }^{\semi(r)} \cO(P_{S_n}^{X_n}(S_n^k[U_n]\cap S_n^k[V_n])))\ar[r]\ar[d]&\colim\limits_{N,k\in \nat}E(\coprod\limits_{ N\le n }^{\semi(r)}\cO (P_{S_n}^{X_n}(S_n^k[U_n])))\ar[d]\\
	\colim\limits_{N,k\in \nat}E (\coprod\limits_{ N\le n }^{\semi(r)}\cO (P_{S_n}^{X_n}(S_n^k[V_n])))\ar[r]&\colim\limits_{N\in \nat} E(\coprod\limits_{ N\le n }^{\semi(r)}\cO(P_{S_n}(X_n)))}
\end{equation}

\begin{rem}
At this this point it is important to work with the semi-free union (see \cref{regeriogregregerg}).  Indeed, in general  the family of invariant subsets 
$\left(\coprod_{ N\le n }\cO (P_{S_n}^{X_n}(S_n^k[V_n]))\right)_{k\in \nat}$
is not a big family in the $G$-bornological coarse space  $\coprod_{ N\le n }^{\free}\cO(P_{S_n}(X_n))$
\end{rem}

By the decomposability assumption on $(X_{n})$ 
for every natural number $n$  we can choose nice  equivariant $S_{n}^{n}$-disjoint families
  $(U_{n,i})_{i \in I_n}$ and $(V_{n,j})_{j \in J_n}$ of subspaces of $X_{n}$ such that  $X_{n}=U_{n}\cup V_{n}$ with \[U_{n}:=\bigcup_{i\in I_{n}}U_{n,i} \ ,\quad 
  V_{n}:= \bigcup_{j\in J_{n}}V_{n,j}\] and such that
 the sequences \[\left(\coprod_{i \in I_n}^{\sub} U_{n,i}\right) \ , \quad \left(\coprod_{j \in J_n}^{\sub} V_{n,j}\right) \]  belong to  $\VP_{E}$. This choice will be fixed for the rest of the subsection.

Since above we have chosen $S_{n}^{n}$-disjoint families (the $n$th power is important), by the first assertion of \cref{lem:ballsinripscomplexes} for every  $k$ in $\nat$  there exists an   $N_{1}(k)$ in $\nat$     such that for every integer $n$ with $N_{1}(k)\le n$ the families
\begin{equation}\label{it43toi3t3t34t}
(S_{n}^{k}[U_{n,i}])_{i\in I_{n}}\ , \quad (S_{n}^{k}[V_{n,j}])_{j\in J_{n}}
\end{equation} are $ S_{n}$-disjoint, and the families 
\begin{equation}\label{frwelkmlfwefewf}(P^{X_{n}}_{S_{n}}(S_{n}^{k}[U_{n,i}]))_{i\in I_{n}}\ , \quad (P^{X_{n}}_{S_{n}}(S^{k}_{n}[V_{n,j}]))_{j\in J_{n}}\end{equation}
are $U_{r}$-disjoint. Note that $N_{1}(k)$ also depends on $r$, but we will not indicate this in the notation.

For a $G$-uniform bornological coarse space $A$, whose uniform structure is induced by an invariant metric and a real number $r$, we let  $\cO^{r}(A)$ denote the $G$-uniform bornological coarse space
obtained from $\cO(A)$ by replacing the coarse structure by the coarse structure generated by all entourages of propagation (\cref{iofjoewffewfw}) bounded by $r$.

We abbreviate  \[Y^{k}_{n,i,j} := S_n^k[U_{n,i}]\cap S_n^k[V_{n,j}]\]
and consider the family \[\cY^{k}_{n}:=(Y^{k}_{n,i,j})_{i\in I_{n},j\in J_{n}}\ .\]
Below, we use the abbreviation \begin{equation}\label{f34f3pokpo34f34f36456}
  \sub^{k}(S_{n}):=\sub^{\cY^{k}_{n}}(S_{n})\ ,\end{equation} see \cref{ioefoweifwefwef}.

 \begin{lem}\label{ergioergerg345435}
For  all natural numbers $n$  and $k$ with $N_{1}(k)\le n$ we have an isomorphism of $G$-bornological coarse spaces     
\begin{equation}\label{vwerkhwekfdewqfwf}
\cO^{r}(P^{X_{n}}_{S_{n}}(S_n^k[U_n]\cap S_n^k[V_n]))\cong \cO^{r}(P_{\sub^{k}(S_{n})}( \coprod_{i\in I_{n},j\in J_{n}}^{\sub}Y^{k}_{n,i,j}))\ .
\end{equation}

\end{lem}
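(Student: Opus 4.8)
The plan is to exhibit the asserted isomorphism \eqref{vwerkhwekfdewqfwf} as the identity on underlying $G$-sets, verifying separately that the two sides carry the same $G$-set, the same bornology, and the same coarse structure; since we are comparing $G$-bornological coarse spaces, the uniform structures of the two cones play no role.

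First I would settle the combinatorial data. Because $N_1(k)\le n$, the families $(S_n^k[U_{n,i}])_{i\in I_n}$ and $(S_n^k[V_{n,j}])_{j\in J_n}$ are $S_n$-disjoint by \eqref{it43toi3t3t34t}, and since $S_n$ contains the diagonal this already yields $S_n^k[U_{n,i}]\cap S_n^k[U_{n,i'}]=\emptyset$ for $i\neq i'$, and likewise for the $V$'s. Hence the sets $Y^k_{n,i,j}$ are pairwise disjoint, so the canonical map $\coprod_{i\in I_n,j\in J_n}Y^k_{n,i,j}\to X_n$ is injective with image $S_n^k[U_n]\cap S_n^k[V_n]$, identifying the two $G$-sets of probability measures. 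The same disjointness shows that any $S_n$-bounded subset of $S_n^k[U_n]\cap S_n^k[V_n]$ already lies in a single $Y^k_{n,i,j}$, so that $S_n$ restricted to this subset coincides with $\sub^{k}(S_n)=S_n\cap U(\cY^k_n)$. Therefore the two Rips complexes in \eqref{vwerkhwekfdewqfwf} have exactly the same simplices, hence the same underlying simplicial complex, and their bornologies agree since both are generated by the sets $P^{X_n}_{S_n}(B)$ for $B$ bounded in $X_n$.

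The substance of the lemma is the agreement of the two coarse structures after applying $\cO^{r}$, and here the two spherical quasi-metrics genuinely differ: the left-hand one measures distances in the ambient complex $P^{X_n}_{S_n}(X_n)$, while the right-hand one uses the intrinsic quasi-metric of the disjoint union, which is the larger of the two. The cross terms are easy to dispose of: by \eqref{frwelkmlfwefewf} the families $(P^{X_n}_{S_n}(S_n^k[U_{n,i}]))_{i}$ and $(P^{X_n}_{S_n}(S_n^k[V_{n,j}]))_{j}$ are $U_r$-disjoint, so two points sitting over distinct indices $(i,j)$ have ambient distance greater than $r$; as $\cO^{r}$ only admits entourages of propagation at most $r$, no generating entourage on the left can relate such points, which reproduces the total separation of the components already present in the disjoint union on the right. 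The main obstacle is the comparison within a fixed index $(i,j)$, where an ambient $S_n$-path of bounded length between points of $Y^k_{n,i,j}$ may leave $Y^k_{n,i,j}$ and reenter it, so that a priori the left-hand coarse structure relates strictly more pairs than the right-hand one. To close this gap I would bound such excursions using \cref{iuhgeirughergiregergergregregergrer}, by which an $S_n$-path of length at most $m$ issuing from $Y^k_{n,i,j}$ remains inside $S_n^{k+m}[U_{n,i}]\cap S_n^{k+m}[V_{n,j}]$, and control the concomitant loss of disjointness by \cref{lem:distancesinrelativerips}; combined with the niceness of the chosen families this should produce, for the fixed propagation bound $r$, a constant $r'=r'(r)$ such that ambient $r$-closeness inside a component forces intrinsic $r'$-closeness. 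This makes the identity a coarse equivalence on each component, so the two generated coarse structures coincide, and together with the identifications of the preceding paragraph the identity becomes the desired isomorphism of $G$-bornological coarse spaces.
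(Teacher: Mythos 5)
Up to the within-block comparison, your proposal runs parallel to the paper's proof: the identification of the underlying $G$-sets, simplices and bornologies via the $S_n$-disjointness of the families \eqref{it43toi3t3t34t}, and the separation of distinct blocks $Y^{k}_{n,i,j}$ at ambient distance greater than $r$ via the $U_r$-disjointness of the families \eqref{frwelkmlfwefewf}. At that point the paper concludes that the entourages of propagation at most $r$, measured in the respective quasi-metrics, coincide under the natural identification, and hence that the two sides of \eqref{vwerkhwekfdewqfwf} carry the same coarse structure; your final paragraph is an attempt to justify precisely the within-block part of this claim, and that is where your argument breaks down.

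The implication you need --- that ambient $r$-closeness of two points of $P^{X_n}_{S_n}(Y^{k}_{n,i,j})$ forces intrinsic $r'$-closeness in that same complex, for some $r'=r'(r)$ --- is not delivered by the tools you cite, and it fails in the stated generality. An ambient path of length at most $r$ between such points is only known, by \cref{lem:ballsinripscomplexes}, to stay inside $P^{X_n}_{S_n}(S_n^{m}[Y^{k}_{n,i,j}])$ with $m$ of the order of $r$, i.e.\ inside the Rips complex of the \emph{larger} set $Y^{k+m}_{n,i,j}$; \cref{iuhgeirughergiregergergregregergrer} and \cref{lem:distancesinrelativerips} are statements about thickened entourages and about disjointness of families, and niceness is a coarse-equivalence statement about the spaces $\coprod^{\sub}_{i}U_{n,i}$ with no metric content at a fixed scale, so none of them pushes such a path back into the intrinsic complex of $Y^{k}_{n,i,j}$ itself. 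Indeed, a single block $S_n^{k}[U_{n,i}]\cap S_n^{k}[V_{n,j}]$ need not be intrinsically coarsely connected: it can consist of two pieces whose $S_n$-gap makes their intrinsic distance infinite, while an ambient path of length at most $r$ through points outside the block joins them; the standing hypotheses only constrain distinct members within each family and do not exclude this. Moreover, even granting such an $r'$, your conclusion would not follow: an entourage of ambient propagation at most $r$ would then merely have intrinsic propagation at most $r'$, which exhibits the identity as a morphism from $\cO^{r}$ of the left-hand side to $\cO^{r'}$ of the right-hand side (a coarse equivalence with distortion), whereas the lemma asserts an isomorphism of $G$-bornological coarse spaces at the \emph{same} $r$, i.e.\ literal equality of the two generated coarse structures; a bijective coarse equivalence is not an isomorphism of bornological coarse spaces, so the step ``coarse equivalence on each component, hence the coarse structures coincide'' is a non sequitur. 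Note also that in the paper's global argument the escape-into-thickenings phenomenon you worry about is absorbed by the colimit over $k$ in the pushout \eqref{eq:decompositionpushout}, via \cref{lem:ballsinripscomplexes}, rather than inside this lemma.
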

\begin{proof} 
First of all the underlying $G$-simplicial complexes 
$P^{X_{n}}_{S_{n}}(S_n^k[U_n]\cap S_n^k[V_n])$ and
$P_{\sub^{k} (S_{n})}( \coprod_{i\in I_{n},j\in J_{n}}^{\sub}Y^{k}_{n,i,j})$ are isomorphic  since the families \eqref{it43toi3t3t34t} are $S_{n}$-disjoint. The distance of the different components on the right-hand side in the metric of $P_{S_{n}}^{X_{n}}(X_{n})$ is bigger than $r$ since the families \eqref{frwelkmlfwefewf} are $U_{r}$-disjoint. 
Hence the entourages on $P^{X_{n}}_{S_{n}}(S_n^k[U_n]\cap S_n^k[V_n])$ and on $P_{\sub^{k} (S_{n})}( \coprod_{i\in I_{n},j\in J_{n}}^{\sub}Y^{k}_{n,i,j})$ of propagation less or equal to $r$ (measured in the respective quasi-metric) are equal (under the natural identification).
The bornologies of both spaces coincide by construction.

Since we consider the cone $\cO^{r}$ we  can conclude that the natural map  \eqref{vwerkhwekfdewqfwf} induces an isomorphism of $G$-bornological coarse spaces.
 \end{proof}

Using \cref{ergioergerg345435} we can rewrite the pushout square \eqref{eq:decompositionpushout} in the form:
\[ \xymatrix{
	 \colim\limits_{N,k\in \nat}E (\coprod\limits_{ N \le n }^{\semi(r)} \cO(P_{\sub^{k}(S_n)}(\coprod\limits_{i\in I_{n},j\in J_{n}}^{\sub}Y^{k}_{n,i,j})))\ar[r]\ar[d]&  \colim\limits_{N,k\in \nat}E (\coprod\limits_{ N \le n }^{\semi(r)}\cO (P_{S_n}^{X_n}(S_n^k[U_n])))\ar[d]\\
	  \colim\limits_{N ,k \in \nat}E (\coprod\limits_{ N \le n }^{\semi(r)}\cO (P_{S_n}^{X_n}(S_n^k[V_n])))\ar[r]&  \colim\limits_{N \in \nat}E(\coprod\limits_{ N \le n }^{\semi(r)}\cO(P_{S_n}(X_n)))}
\]
Since we take the colimit over $N $  the fact that the isomorphism
\eqref{vwerkhwekfdewqfwf} exists only for suffiently large $n$ (and the condition of being sufficiently large depends on $k$) does not cause any problem.

Let
\[ \partial \colon   \colim_{N \in \nat}E(\coprod_{ N \le n }^{\semi(r)}\cO(P_{S_n}(X_n))) \to  \colim_{N,k\in \nat}\Sigma E (\coprod_{ N \le n }^{\semi(r)} \cO(P_{\sub^{k}(S_n)}(\coprod_{i\in I_{n},j\in J_{n}}^{\sub}Y^{k}_{n,i,j}))) \]
denote the boundary map of the Mayer--Vietoris sequence.
Using compactness of $K$, the morphism $\partial \circ \tilde f$ factorizes over a morphism
\[g:K\to    \colim_{N\in \nat} E(\coprod_{ N \le n }^{\semi(r)} \cO(P_{\sub^{k}(S_n)}(\coprod^{\sub}_{i\in I_{n},j\in J_{n}}Y^{k}_{n,i,j})))\]
for some $k$ in $\nat$ which will be fixed from now on.

\begin{lem}
	\label{lem:1}
	There exist  a real number $r'$ such that $r^{\prime}\geq r$ and a sequence $(T_n)$ in $\prod_{n\in \nat}\cC^{G}$ such that   with $(S_{n}^{\prime}):=(S_{n}\cup \sub^{k}(T_{n}))$ the map
	\[K\to  \colim_{N\in \nat}E(\coprod_{ N \le n }^{\semi(r^{\prime})} \cO(P_{\sub^{k}( S^{\prime}_n)}(\coprod_{i\in I_{n},j\in J_{n}}^{\sub}Y^{k}_{n,i,j})))\] induced by $g$
	is equivalent to zero. \end{lem}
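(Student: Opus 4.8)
The plan is to recognize the colimit appearing as the target of the map induced by $g$ as the intrinsic $\bC^\infty$-object attached to the overlap sequence $(\coprod^{\sub}_{i\in I_n,\,j\in J_n}Y^k_{n,i,j})$, and then to prove this object is a phantom by routing it through the $U$-part of the decomposition, which is already known to be $E$-vanishing.

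First I would unwind the target. For $(S_n')=(S_n\cup\sub^k(T_n))$ the entourages $\sub^k(S_n')$ simplify to $\sub^k(S_n\cup T_n)$, and as $(T_n)$ varies these are cofinal among the invariant entourages of $\coprod^{\sub}_{i,j}Y^k_{n,i,j}$ (\cref{wrowpfewfwf}). Letting in addition $r'$ run over $(0,\infty)$ and using $u$-continuity of $E$ together with $\coprod^{\semi}=\colim_{r'}\coprod^{\semi(r')}$ (\cref{regeriogregregerg}) to absorb this colimit, composing $g$ with the canonical maps into the full colimit yields a single map from $K$ into $\bC^{\infty}\big((\coprod^{\sub}_{i,j}Y^k_{n,i,j})\big)$. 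Because $K$ is compact, it suffices to show that this object is a phantom: compactness then forces the map to be annihilated at a finite stage, which is exactly the existence of $r'$ and $(T_n)$ as claimed.

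The phantom statement is the core, and here I would use that $(\coprod^{\sub}_{i\in I_n}U_{n,i})$ lies in $\VP_E$ by the decomposability hypothesis. Niceness of the families $(U_{n,i})_i$ makes the inclusions $U_{n,i}\hookrightarrow S_n^k[U_{n,i}]$ induce termwise equivalences, so \cref{lem:vanishingiscoarselyinvariant} promotes the thickened sequence with terms $\coprod^{\sub}_{i}S_n^k[U_{n,i}]$ into $\VP_E$ as well. Since $Y^k_{n,i,j}\subseteq S_n^k[U_{n,i}]$ and the thickenings $(S_n^k[U_{n,i}])_i$ are pairwise disjoint by the $S_n$-disjointness \eqref{it43toi3t3t34t}, the set $\bigcup_{i,j}Y^k_{n,i,j}$ is a $G$-invariant subset whose induced subspace is $\coprod^{\sub}_i(\bigcup_j Y^k_{n,i,j})$; as $\VP_E$ is closed under passage to sequences of $G$-invariant subspaces (immediate from \cref{roigrgegg}), this subspace sequence belongs to $\VP_E$ and its $\bC^{\infty}$-object is a phantom. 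It remains to compare this with $\bC^{\infty}\big((\coprod^{\sub}_{i,j}Y^k_{n,i,j})\big)$: the finer, $I_n\times J_n$-indexed subordinate structure differs from the induced one only in that it refuses to connect $Y^k_{n,i,j}$ with $Y^k_{n,i,j'}$ for $j\ne j'$, but by the $U_r$-disjointness \eqref{frwelkmlfwefewf} these pieces are already separated at every bounded propagation. Since $\cO$ only records entourages of finite propagation, the two $\bC^{\infty}$-objects agree after applying $\cO$ — this is exactly the mechanism used in \cref{ergioergerg345435}.

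The step I expect to be the main obstacle is this last comparison of coarse structures. The naive induced structure on $\bigcup_{i,j}Y^k_{n,i,j}$ is strictly coarser than the subordinate one indexed by $I_n\times J_n$, so the identification is genuinely false at the level of $G$-bornological coarse spaces and becomes correct only after passing to $\cO$ and restricting to bounded propagation. The precise bookkeeping — choosing $N_1(k)$ so that the families \eqref{frwelkmlfwefewf} are $U_{r'}$-disjoint for the \emph{enlarged} propagation $r'$, and verifying that this choice survives the outer colimit over $N$ — is where the argument must be carried out carefully, after which weak additivity and compactness of $K$ finish the proof as in the preceding lemmas.
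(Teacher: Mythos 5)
Your proposal follows essentially the same route as the paper's proof: factor through the $U$-side of the decomposition, use niceness together with \cref{lem:vanishingiscoarselyinvariant} to put the thickened sequence $(\coprod^{\sub}_{i\in I_n}S_n^k[U_{n,i}])$ into $\VP_E$, pass to the overlap as a sequence of invariant subspaces, and transport the resulting phantom property back through the bounded-propagation identification of \cref{ergioergerg345435} (this is the paper's isomorphism \eqref{vwerkhwekfdewqfwf1}), finishing with compactness of $K$.

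One caution on your final comparison step: you assert that the $I_n\times J_n$-indexed and the induced ($I_n$-indexed) $\bC^{\infty}$-objects \emph{agree}, justified by the $U_r$-disjointness \eqref{frwelkmlfwefewf}. That disjointness is only available for the initial entourages $(S_n)$; for the arbitrary enlarged entourages that occur in the full colimit defining the induced-structure $\bC^{\infty}$-object, the pieces $Y^k_{n,i,j}$ with the same $i$ and different $j$ need not be separated at all, and no choice of $N_1(k)$ repairs this (the problem is the entourage, not the propagation parameter $r'$). This is precisely why the lemma is stated with enlargements of the special block-internal form $S'_n:=S_n\cup\sub^k(T_n)$, and why \cref{lem:distancesinrelativerips} exists: disjointness of the Rips pieces survives such block-internal enlargements (at the cost of $2$), so the identification of \cref{ergioergerg345435} can be run along the family of entourages $S_n\cup\sub^k(T_n)$, which by \cref{wrowpfewfwf} is cofinal for the coarse structure of $\coprod^{\sub}_{i,j}Y^k_{n,i,j}$. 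Your argument goes through once the comparison is restricted to this special family (matching the paper's formulation) instead of being claimed for the two full colimits at once.
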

\begin{proof}
As in \cref{ergioergerg345435}
for all integers $n$   with $N_{1}(k)\le n$ we also have an isomorphism 
\begin{equation}\label{vwerkhwekfdewqfwf1}
\cO^{r}(P_{\sub^{k,\prime}(S_{n})}^{\coprod_{i\in I_{n}}^{\sub} S_{n}^{k}[U_{n,i}]  }  (\coprod_{i\in I_{n}}S_{n}^{k}[U_{n,i}] \cap S_n^k[V_{n}]))\cong \cO^{r}(P_{\sub^{k}(S_{n})}( \coprod_{i\in I_{n},j\in J_{n}}^{\sub} Y^{k}_{n,i,j}))\ ,
\end{equation}
where we use the abbreviation \begin{equation}\label{fef235345453e2e32e2e1}
\sub^{k,\prime}(S_{n}):=\sub^{\cU^{k}_{n}}(S_{n})\,  \end{equation} with
$\cU_{n}^{k}:=(S^{k}_{n}[U_{n,i}])_{i\in I_{n}}$,  and we use the  equality
\[S^{k}_{n}[V_{n}]=\bigcup_{j\in J_{n}} S_{n}^{k}[V_{n,j}]\]
of subsets of $X$.

	The sequence $( \coprod^{\sub}_{i\in I_{n}} S_n^k[U_{n,i}]\cap S_n^k[V_{n}]))_{N_{1}(k)\le n}$ of $G$-bornological coarse spaces  is a  sequence of subspaces of the sequence
	$(\coprod_{i\in I_{n}}^{\sub} S_n^k[U_{n,i}])_{N_{1}(k)\le n}$. Since we assume  that the family $(U_{n,i})_{i\in I_{n}}$ is nice 
	 		the bornological coarse space $\coprod_{i\in I_{n}}^{\sub}S_n^k[U_{n,i}]$ is equivalent to $\coprod_{i\in I_{n}}^{\sub}U_{n,i}$.
	 
	 	By assumption the sequence $(\coprod_{i\in I_{n}}^{\sub} [U_{n,i}])$ is $E$-vanishing. By \cref{lem:vanishingiscoarselyinvariant}  we can now conclude that the sequence 
	 	$(\coprod_{i\in I_{n}}^{\sub}S_n^k[U_{n,i}])$ is $E$-vanishing.
		This implies that the sequence   $( \coprod^{\sub}_{i\in I_{n}}S_{n}^{k}[U_{n,i}] \cap S_n^k[V_{n}])$ is $E$-vanishing.
	 Hence using \cref{wrowpfewfwf}, there exist a real number  $r'$ such that $r\le r^{\prime}$ and a sequence  $(T_n)$ in $\prod_{n\in \nat}\cC_{n}^{G}$ with $ (S_{n})\le (T_{n})$ such that
	\[K \xrightarrow{g}    \colim_{N\in \nat} E (\coprod_{ N \le n }^{\semi(r )} \cO(P_{\sub^{k}( S_n)}(\coprod_{i\in I_{n},j\in J_{n}}^{\sub}Y^{k}_{n,i,j}))) 
	\to  \colim_{N\in \nat} E(\coprod_{ N \le n }^{\semi(r^{\prime})} \cO(P_{\sub^{k}( T_n)}(\coprod_{i\in I_{n},j\in J_{n}}^{\sub}Y^{k}_{n,i,j}))) \]
	is equivalent to zero. We now note that
	$\sub^{k}(T_{n})=\sub^{k}(S_{n}^{\prime})$.
\end{proof}

It follows from \cref{lem:1} that \begin{align*}
K\xrightarrow{\partial\circ \tilde f}& \colim_{N,k\in \nat}\Sigma E(\coprod_{ N \le n }^{\semi(r)} \cO(P_{\sub^{k}(S_n)}(\coprod_{i\in I_{n},j\in J_{n}}^{\sub}Y^{k}_{n,i,j})))\\
\to& \colim_{N,k^{\prime}\in \nat}\Sigma E (\coprod_{ N \le n }^{\semi(r^{\prime})} \cO(P_{\sub^{k^{\prime}}(S^{\prime}_n)}(\coprod_{i\in I_{n},j\in J_{n}}^{\sub}Y^{k^{\prime}}_{n,i,j})))\end{align*}
 is equivalent to zero. 
Consequently,  using the  naturality of the Mayer--Vietoris sequences and the compactness of $K$ (in order to be able to choose $k^{\prime}$), the composition 
\[K\xrightarrow{\tilde f^{\prime}}   \colim_{N\in \nat}E(\coprod_{ N\le n }^{\semi(r)}\cO(P_{S_n}(X_n)))\to \colim_{N\in \nat} E(\coprod_{ N\le n }^{\semi(r^{\prime})}\cO(P_{S^{\prime}_n}(X_n)))\]  
(where $r^{\prime}$ and $(S_{n}^{^{\prime}})$ are as in \cref{lem:1})
lifts to a morphism
\[(\hat f_{U},\hat f_{V}) \colon K \to   \colim_{N\in \nat}E(\coprod_{N\le n}^{\semi(r^{\prime})}\cO(P_{S'_n}^{X_n}(S_n^{k'}[U_n]))) \oplus \colim_{N} E(\coprod_{N\le n}^{\semi(r^{\prime})} (P_{S'_n}^{X_n}(S_n^{k'}[V_n])))\]
for some integer $k^{\prime}$ such that $k^{\prime}\geq k$.

The notation 
$ \sub^{k^{\prime},\prime}(T_{n}) $ is as in \eqref{fef235345453e2e32e2e1},  while $\sub^{k}(T_{n})$ is as in \eqref{f34f3pokpo34f34f36456} for the specific choice of the integer $k$ made just before the statement of \cref{lem:1}.
 
Let $(T_{n}^{\prime\prime})$ be  a sequence in $\prod_{n\in \nat} \cC_{n}^{G}$ such that  $(T_{n})\le(T_{n}^{\prime\prime})$.
We define the sequence \begin{equation}\label{f3roifjo34243f34f}
(S_{n}^{\prime\prime}):=(S_{n}^{\prime}\cup \sub^{k^{\prime},\prime}(T_{n}^{\prime\prime}))\ .
\end{equation}

By \cref{iuhgeirughergiregergergregregergrer} and \cref{lem:distancesinrelativerips}
there exists an integer $N_{2}$ such that $N_{1}(k)\le N_{2}$ and for all integers $n$ with $N_{2}\le n$ the family $(S_{n}^{k^{\prime}}[U_{n,i}])_{i\in I_{n}}
$is $\tilde S_{n}^{\prime\prime}$-disjoint and the family
$(P_{S_{n}^{\prime\prime}}(S^{k^{\prime}}_{n}[U_{n,i}]))_{i\in I_{n}}$ is
$r^{\prime}$-disjoint.

The following is similar to \cref{ergioergerg345435}.

\begin{lem}\label{lem:individualterm}
For all integers $n$  with $N_{2}\le n$ we have an isomorphism of $G$-bornological coarse spaces 
\[\cO^{r^{\prime}}(P^{X_{n}}_{S_{n}^{\prime\prime}}(S_{n}^{k^{\prime}}[U_{n}])) \cong  \cO^{r^{\prime}}(P_{\sub^{k^{\prime},\prime}(S_{n}^{\prime\prime})}(\coprod_{i\in I}^{\sub} S^{k^{\prime}}_{n}[U_{n,i}]))\ .\]
\end{lem}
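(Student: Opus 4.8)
The plan is to replay the argument of \cref{ergioergerg345435} verbatim, with the parameters $(S_{n},k,r)$ replaced by $(S_{n}^{\prime\prime},k^{\prime},r^{\prime})$, feeding in the two disjointness statements established just above from \cref{iuhgeirughergiregergergregregergrer} and \cref{lem:distancesinrelativerips}. Recall that $\sub^{k^{\prime},\prime}(S_{n}^{\prime\prime})=\sub^{\cU^{k^{\prime}}_{n}}(S_{n}^{\prime\prime})=S_{n}^{\prime\prime}\cap U(\cU^{k^{\prime}}_{n})$ with $\cU^{k^{\prime}}_{n}=(S^{k^{\prime}}_{n}[U_{n,i}])_{i\in I_{n}}$ (see \eqref{fef235345453e2e32e2e1}), and that $S^{k^{\prime}}_{n}[U_{n}]=\bigcup_{i\in I_{n}}S^{k^{\prime}}_{n}[U_{n,i}]$ since thickening commutes with unions.

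First I would identify the underlying $G$-simplicial complexes. Since $N_{2}\ge N_{1}(k)$ and $n\ge N_{2}$, the family $(S_{n}^{k^{\prime}}[U_{n,i}])_{i\in I_{n}}$ is $S_{n}^{\prime\prime}$-disjoint, hence in particular pairwise disjoint, so that $S^{k^{\prime}}_{n}[U_{n}]=\coprod_{i\in I_{n}}S^{k^{\prime}}_{n}[U_{n,i}]$ as $G$-sets. Any simplex of $P^{X_{n}}_{S_{n}^{\prime\prime}}(S^{k^{\prime}}_{n}[U_{n}])$ has vertices that are pairwise $S_{n}^{\prime\prime}$-related; by $S_{n}^{\prime\prime}$-disjointness no two such vertices can lie in distinct members $S^{k^{\prime}}_{n}[U_{n,i}]$ and $S^{k^{\prime}}_{n}[U_{n,j}]$ with $i\ne j$. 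Thus the restriction of $S_{n}^{\prime\prime}$ to $S^{k^{\prime}}_{n}[U_{n}]$ agrees with $S_{n}^{\prime\prime}\cap U(\cU^{k^{\prime}}_{n})=\sub^{k^{\prime},\prime}(S_{n}^{\prime\prime})$, and the two underlying $G$-simplicial complexes are canonically isomorphic.

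Next I would compare coarse structures. By the second disjointness statement the family $(P_{S_{n}^{\prime\prime}}(S^{k^{\prime}}_{n}[U_{n,i}]))_{i\in I_{n}}$ is $U_{r^{\prime}}$-disjoint in $P^{X_{n}}_{S_{n}^{\prime\prime}}(X_{n})$, so any two points in distinct components have distance exceeding $r^{\prime}$ in that quasi-metric. Consequently an entourage of propagation $\le r^{\prime}$ can only relate points lying in a common component $S^{k^{\prime}}_{n}[U_{n,i}]$, and under the identification from the previous step the entourages of propagation $\le r^{\prime}$ (measured in the respective quasi-metrics) coincide on the two sides. The bornologies agree by construction, both being induced from $X_{n}$. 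Since $\cO^{r^{\prime}}$ retains only the coarse structure generated by entourages of propagation $\le r^{\prime}$, the natural map is an isomorphism of $G$-bornological coarse spaces.

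The one point that requires genuine care — and which is the analogue of the delicate step in \cref{ergioergerg345435} — is the matching of the quasi-metrics at scale $\le r^{\prime}$: a priori a path of length $\le r^{\prime}$ in the ambient complex $P^{X_{n}}_{S_{n}^{\prime\prime}}(X_{n})$ joining two points of the same component could leave that component and reenter it, which would make the ambient distance strictly smaller than the intrinsic one in $P_{\sub^{k^{\prime},\prime}(S_{n}^{\prime\prime})}(\coprod^{\sub}_{i\in I_{n}}S^{k^{\prime}}_{n}[U_{n,i}])$. The $U_{r^{\prime}}$-disjointness of the components is exactly what rules this out below scale $r^{\prime}$, and this is why the chain of reductions culminating in the choice of $N_{2}$ (via \cref{lem:distancesinrelativerips} and \cref{iuhgeirughergiregergergregregergrer}) was arranged beforehand.
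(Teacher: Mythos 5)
Your proof is correct and takes essentially the same approach as the paper's: identify the underlying $G$-simplicial complexes via the $S_{n}^{\prime\prime}$-disjointness of the family $(S_{n}^{k^{\prime}}[U_{n,i}])_{i\in I_{n}}$, match the entourages of propagation at most $r^{\prime}$ using the $U_{r^{\prime}}$-disjointness of the subcomplexes $(P_{S_{n}^{\prime\prime}}(S_{n}^{k^{\prime}}[U_{n,i}]))_{i\in I_{n}}$, and observe that the bornologies agree by construction, so that passing to $\cO^{r^{\prime}}$ yields the isomorphism. Your closing remark about paths leaving and reentering a component is exactly the point the paper's appeal to $U_{r^{\prime}}$-disjointness is meant to handle, so it is a faithful (and slightly more explicit) rendering of the same argument.
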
\begin{proof}
First of all the underlying $G$-simplicial complexes 
\[P^{X_{n}}_{S_{n}^{\prime\prime}}(S_{n}^{k^{\prime}}[U_{n}]) \ , \quad
P_{\sub^{k^{\prime},\prime}(S_{n}^{\prime\prime})}(\coprod_{i\in I}^{\sub} S^{k^{\prime}}_{n}[U_{n,i}])\] are isomorphic  since the family $(S_{n}^{k^{\prime}}[U_{n,i}])_{i\in I_{n}}$
  is
$S^{\prime\prime}_{n}$-disjoint. The distance of the different components on the right-hand side in the metric of $P_{S^{\prime}_{n}}^{X_{n}}(X_{n})$ is bigger than $r^{\prime}$. Since we consider the cone $\cO^{r^{\prime}}$ we now can conclude that the $G$-coarse structures
on both sides coincide. The bornologies coincide by construction. 
 \end{proof}

We get an equivalence
\begin{equation}\label{vweoijowepfewfewf}
\colim_{N\in \nat} E(\coprod_{N\le n}^{\semi(r^{\prime})}\cO(P_{S''_n}^{X_n}(S_n^{k'}[U_n]))) \simeq  \colim_{N\in \nat}E(\coprod_{N\le n}^{\semi(r^{\prime})}\cO(P_{\sub^{k^{\prime},\prime}(S_{n}^{\prime\prime})}(\coprod_{i\in I}^{\sub} S^{k^{\prime}}_{n}[U_{n,i}]) )) 
\end{equation}

\begin{lem}
	\label{lem:2}
	 There exist a real number $r^{\prime\prime}$ such that  $r'' \geq r'$ and a sequence   $(S_n'')$ in $\prod_{n\in \nat}\cC_{n}^{G}$ with $(S_n'')\geq (S_n')$ such that the map
	\[K \xrightarrow{\hat f_{U}} \colim_{N\in \nat} E(\coprod_{N\le n}^{\semi(r^{\prime})}\cO(P_{S'_n}^{X_n}(S_n^{k'}[U_n]))) \to
	\colim_{N\in \nat} E(\coprod_{N\le n}^{\semi(r^{\prime\prime})}\cO(P_{S''_n}^{X_n}(S_n^{k'}[U_n]))) \]
	is equivalent to zero.
\end{lem}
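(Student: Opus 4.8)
The plan is to follow the template of the proof of \cref{lem:1}, but now exploiting the $E$-vanishing of the $U$-sequence in place of the intersection sequence. The first step is to record that the sequence $(\coprod_{i\in I_{n}}^{\sub}S_{n}^{k^{\prime}}[U_{n,i}])$ is $E$-vanishing. Indeed, each family $(U_{n,i})_{i\in I_{n}}$ was chosen to be nice, so $\coprod_{i\in I_{n}}^{\sub}S_{n}^{k^{\prime}}[U_{n,i}]$ is equivalent to $\coprod_{i\in I_{n}}^{\sub}U_{n,i}$; the latter sequence belongs to $\VP_{E}$ by the decomposability hypothesis on $(X_{n})$, and hence \cref{lem:vanishingiscoarselyinvariant} shows that the thickened sequence is $E$-vanishing as well. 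Consequently $\bC^{\infty}((\coprod_{i\in I_{n}}^{\sub}S_{n}^{k^{\prime}}[U_{n,i}]))$ is a phantom object.

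Next I would use \cref{lem:individualterm} together with the equivalence \eqref{vweoijowepfewfewf} to identify, for $n\ge N_{2}$ and after passing to the colimit over $N$, the target $\colim_{N}E(\coprod_{N\le n}^{\semi(r^{\prime\prime})}\cO(P_{S_{n}^{\prime\prime}}^{X_{n}}(S_{n}^{k^{\prime}}[U_{n}])))$ with a term in the diagram computing $\bC^{\infty}((\coprod_{i\in I_{n}}^{\sub}S_{n}^{k^{\prime}}[U_{n,i}]))$; here \cref{wrowpfewfwf} guarantees that the entourages of the form $\sub^{k^{\prime},\prime}(T_{n})$ are cofinal in the coarse structure of the sub-union, so that choices of $(S_{n}^{\prime\prime})$ of the prescribed form \eqref{f3roifjo34243f34f} are cofinal among the indices of the inner colimit. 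The composition of $\hat f_{U}$ with the canonical map into this phantom object is then a morphism from the compact object $K$ into a phantom, hence equivalent to zero. By compactness of $K$ this composition is already zero at a finite stage of the colimit, which we take to define the required $r^{\prime\prime}\ge r^{\prime}$ and $(S_{n}^{\prime\prime})\ge(S_{n}^{\prime})$; translating back through \cref{lem:individualterm} yields the statement.

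The hard part will be the bookkeeping of the three nested colimits (over the cutoff $N$, over the entourage sequences, and over the propagation parameter), and in particular verifying that the identification of \cref{lem:individualterm} — which is valid only for $n\ge N_{2}$, with $N_{2}$ depending on $r^{\prime}$ and the fixed integer $k$ — is compatible with the cofinality of the sub-entourages. One must arrange the diagrams so that the finite stage produced by compactness of $K$ genuinely corresponds to admissible choices of $r^{\prime\prime}$ and $(S_{n}^{\prime\prime})$, and so that the map $\hat f_{U}$ factors through the canonical structure map into $\bC^{\infty}((\coprod_{i\in I_{n}}^{\sub}S_{n}^{k^{\prime}}[U_{n,i}]))$. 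Once this is set up, the vanishing is immediate from phantomness and compactness, exactly as in \cref{lem:1}.
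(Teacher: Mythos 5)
Your proposal is correct and follows essentially the same route as the paper's proof: translate via \eqref{vweoijowepfewfewf} (i.e.\ \cref{lem:individualterm}) to the sub-union picture, establish $E$-vanishing of $(\coprod_{i\in I_n}^{\sub}S_n^{k'}[U_{n,i}])$ via niceness, the decomposability hypothesis and \cref{lem:vanishingiscoarselyinvariant}, and then conclude with \cref{wrowpfewfwf} (cofinality of the sub-entourages, so that sequences of the form \eqref{f3roifjo34243f34f} index a cofinal subdiagram) together with compactness of $K$ mapping into a phantom object. The bookkeeping issues you flag (the identification holding only for $n\ge N_2$) are handled exactly as you suggest, by working after the colimit over the cutoff $N$.
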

\begin{proof}
In view of the equivalence \eqref{vweoijowepfewfewf} we must show that for an appropriate choice of the sequence $(T_{n}^{\prime\prime})$ (which determines the sequence $(S_{n}^{\prime\prime})$ by \eqref{f3roifjo34243f34f}) the morphism
\[\mathclap{
K \to \colim_{N\in \nat} E(\coprod_{N\le n}^{\semi(r^{\prime})}\cO(P_{\sub^{k^{\prime},\prime}(S_{n}^{\prime})}(\coprod_{i\in I}^{\sub} S^{k^{\prime}}_{n}[U_{n,i}]) ))  \to
	\colim_{N\in \nat}E(\coprod_{N\le n}^{\semi(r^{\prime\prime})}\cO(P_{\sub^{k^{\prime},\prime}(S_{n}^{\prime\prime})}(\coprod_{i\in I}^{\sub} S^{k^{\prime}}_{n}[U_{n,i}]) ))
}\]
	is equivalent to zero.
As in the proof of  \cref{lem:1} we argue that the sequence of $G$-bornological coarse spaces  $(\coprod_{i\in I}^{\sub} S^{k^{\prime}}_{n}[U_{n,i}])$
is $E$-vanishing. Using \cref{wrowpfewfwf}  
 and compactness of $K$  
 we find the desired real number $r^{\prime\prime}$ and sequence $(S^{\prime\prime}_{n})$.
\end{proof}

Using \cref{lem:2}, we find a real number $r''$ with $r'' \geq r'$ and a sequence $(S_n'')$ in $\prod_{n\in \nat}\cC_{n}$ with $(S_n'')\geq (S_n')$ such that the map \[K \to\colim_{N\in \nat} E(\coprod_{N\le n}^{\semi(r^{\prime\prime})}\cO(P_{S''_n}^{X_n}(S_n^{k'}[U_n])))\oplus \colim_{N\in \nat} E(\coprod_{N\le n}^{\semi(r^{\prime\prime})}\cO(P_{S''_n}^{X_n}(S_n^{k'}[V_n])))\]
  induced by $(\hat f_{U},\hat f_{V})$ and the natural inclusions is equivalent to zero.

Composition of the induced map with the morphism
\begin{eqnarray*} \lefteqn{\hspace{-3cm}
\colim_{N\in \nat} E(\coprod_{N\le n}^{\semi(r^{\prime\prime})}\cO(P_{S''_n}^{X_n}(S_n^{k'}[U_n])))\oplus \colim_{N\in \nat}E(\coprod_{N\le n}^{\semi(r^{\prime\prime})}\cO(P_{S''_n}^{X_n}(S_n^{k'}[V_n])))}&&\\&\to& \colim_{N\in \nat} E(\coprod_{N\le n}^{\semi(r^{\prime\prime})}
\cO(P_{S''_n}(X_{n})))
\end{eqnarray*}

from the Mayer--Vietoris sequence gives the map \eqref{f349u0ff34f3f}
\[\tilde f^{\prime} : K \xrightarrow{\tilde f}  \colim_{N\in \nat} E(\coprod_{ N\le n }^{\semi(r)}\cO(P_{S_n}(X_n))) \to  \colim_{N\in \nat}E(\coprod_{ N\le n }^{\semi(r^{\prime\prime})}\cO(P_{S^{\prime\prime}_n}(X_n))) \]
which is then also equivalent to zero.

 This finishes the proof of \cref{thm:dec}.

\subsection{Proof of Theorem \ref{thm:mainFDC}}\label{sec:proofofmainFDC}

As already observed in \cref{efwoifuiweofewfewff}
the results shown so far imply:

\begin{kor}\label{cor:vanishingclosedunderdecomposition}
 The class $\cV_{E}$ of $E$-vanishing $G$-bornological coarse spaces is closed under decomposition.
\end{kor}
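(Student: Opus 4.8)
The plan is to unwind the definition of \emph{closed under decomposition} and then concatenate the three intermediate results already established in this section. By definition, the assertion amounts to the following implication: if a $G$-bornological coarse space $X$ is decomposable over $\cV_{E}$, then $X\in\cV_{E}$. So I would fix such an $X$ and route it through the class $\VP_{E}$ of $E$-vanishing sequences, since all the hard work has been phrased at the level of sequences.

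First I would pass from the single space to its constant sequence. By \cref{cor:constantdecomposableseq}, decomposability of $X$ over $\cV_{E}$ implies that the constant sequence $\underline{X}$ is decomposable over $\VP_{E}$. This is the bridge that converts a decomposition of one space into a decomposition of a sequence: for each $n$ the families $\coprod_{i\in I_{n}}^{\sub}U_{i}^{T_{n}}$ and $\coprod_{j\in J_{n}}^{\sub}V_{j}^{T_{n}}$ are $E$-vanishing spaces, and \cref{lem:componentvanishing-sequencevanishing} upgrades each of them to an $E$-vanishing \emph{sequence}, which is exactly what \cref{def:strongdec} requires.

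Next I would invoke \cref{thm:dec}, which states that $\VP_{E}$ is closed under decomposition. Applied to the decomposable sequence $\underline{X}$ produced in the previous step, it yields $\underline{X}\in\VP_{E}$; that is, the constant sequence is $E$-vanishing. Finally I would descend from the sequence back to the space via \cref{lem:constantseqvanishing-componentvanishing}: since $\underline{X}$ is $E$-vanishing, the space $X$ is itself $E$-vanishing, i.e.\ $X\in\cV_{E}$. Concatenating the three implications gives the corollary.

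The genuine content of the argument is carried entirely by \cref{thm:dec}, whose proof is the Mayer--Vietoris plus weak-transfer computation of the preceding subsection; that is where the main obstacle lies. For the corollary itself I expect no further difficulty, as it is a purely formal three-step composition \cref{cor:constantdecomposableseq} $\Rightarrow$ \cref{thm:dec} $\Rightarrow$ \cref{lem:constantseqvanishing-componentvanishing}, precisely matching steps \ref{foiwefweweff3ee}, \ref{eiowfowefewfew} and \ref{reuifzewiewfwefewfew} of the outline in \cref{efwoifuiweofewfewff}.
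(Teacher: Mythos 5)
Your proposal is correct and follows exactly the paper's own argument: the corollary is the formal concatenation \cref{cor:constantdecomposableseq} $\Rightarrow$ \cref{thm:dec} $\Rightarrow$ \cref{lem:constantseqvanishing-componentvanishing}, precisely as laid out in the proof outline of \cref{efwoifuiweofewfewff}, which is all the paper itself invokes.
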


Recall \cref{efiuweofewfewf} of the notion of a semi-bounded $G$-coarse space. Furthermore recall \cref{weffoiwfewwef423453} of a discontinuous $G$-action on a $G$-bornological space.

\begin{prop}\label{prop:semiboundedisvanishing}
 The class of  $G$-bornological coarse spaces whose underlying $G$-coarse space is semi-bounded and has a discontinuous $G$-action is contained in the class of $E$-vanishing spaces $\cV_{E}$.
\end{prop}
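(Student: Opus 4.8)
The plan is to prove the slightly stronger statement that for every $X$ in this class one has $\colim_{S\in\cC^G}E(\cO(P_S(X)))\simeq 0$, and to deduce the full $E$-vanishing property from it. First I would record that the class is closed under passing to $G$-invariant subsets: if $Y\subseteq X$ is invariant, then every coarse component of $Y$ lies in a component of $X$ and is therefore $T$-bounded, so $Y$ is again semi-bounded (\cref{efiuweofewfewf}), and the orbit condition of \cref{weffoiwfewwef423453} is clearly inherited. By \cref{opwefwefewfwf} and the reformulation in \cref{foiejifoefff23f23} it then suffices to establish $\colim_{S\in\cC^G}E(\cO(P_S(X)))\simeq 0$ for every $X$ in the class and to apply it to each invariant $Y$. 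Since the entourage $T$ witnessing semi-boundedness lies in $\cC^G$, the invariant entourages $S$ with $\diag(X)\cup T\subseteq S$ are cofinal in $\cC^G$, so I may restrict the colimit to such $S$.

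Second I would analyse the shape of $P_S(X)$ for such $S$. As every coarse component $C$ of $X$ is $T$-bounded, hence $S$-bounded, the complex $P^{X}_{S}(C)$ is the full simplex on the set $C$, of spherical diameter at most $\pi/2$; and since distinct components are pairwise coarsely disjoint, $P_S(X)$ is the disjoint union of these simplices indexed by the $G$-set $\pi_0(X)$. Consequently the coarse structure of $P_S(X)$ is generated by the single entourage $\bigsqcup_{\alpha}P^{X}_{S}(C_\alpha)^2$, and its bornology is exactly the class of subsets meeting only finitely many components.

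Third, and this is where discontinuity enters, I would build a $G$-equivariant contraction onto a coarsely discrete subspace. For each $G$-orbit in $\pi_0(X)$ choose a representative component $C_{\alpha_0}$ with setwise stabiliser $H$. Because $C_{\alpha_0}$ is bounded, \cref{weffoiwfewwef423453} forces every $H$-orbit in $C_{\alpha_0}$ to be finite, so the uniform probability measure on such an orbit is an $H$-fixed vertex $v_{\alpha_0}\in P^{X}_{S}(C_{\alpha_0})$. Spreading over $G$ yields a $G$-equivariant family $(v_\beta)_{\beta\in\pi_0(X)}$ with $v_\beta\in P^{X}_{S}(C_\beta)$, and the fibrewise affine contractions $h_s(\mu)=(1-s)\mu+s\,v_\beta$ on $P^{X}_{S}(C_\beta)$ assemble into a $G$-equivariant unit-speed homotopy from the identity to the collapse $r\colon P_S(X)\to Z:=\{v_\beta\}_{\beta}$. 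Thus $Z$ is a $G$-equivariant deformation retract of $P_S(X)$ in $G\UBC$, and its induced structure is that of the $G$-set $\pi_0(X)$ equipped with the minimal, i.e. coarsely discrete, coarse structure and the finite bornology, since the $v_\beta$ sit in pairwise coarsely disjoint bounded components.

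Finally I would conclude by flasqueness. By homotopy invariance of the cone functor \cite[Cor.~9.38]{equivcoarse} and coarse invariance of $E$, the retraction induces an equivalence $E(\cO(P_S(X)))\simeq E(\cO(Z))$. On the coarsely discrete space $Z$ the cone $\cO(Z)$ is a coarsely disjoint union of rays, and the $G$-equivariant shift $(t,z)\mapsto(t+1,z)$ is a flasqueness witness; the point is precisely that discreteness of $Z$ removes the shrinking of the hybrid entourages in the simplicial directions, which is exactly the phenomenon that would obstruct such a shift directly on the positive-dimensional simplices $P^{X}_{S}(C_\alpha)$. Hence $\cO(Z)$ is flasque, so $E(\cO(Z))\simeq 0$ and therefore $E(\cO(P_S(X)))\simeq 0$ for all admissible $S$, giving $\colim_{S\in\cC^G}E(\cO(P_S(X)))\simeq 0$ and thus $X\in\cV_{E}$. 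I expect the main obstacle to be the verification that the fibrewise affine contraction is admissible for \cite[Cor.~9.38]{equivcoarse}, namely that it defines a genuine morphism out of the interval object in $G\UBC$ uniformly across the infinite-dimensional and uniformly separated components, together with the careful check of the flasqueness axioms for the hybrid structure on $\cO(Z)$.
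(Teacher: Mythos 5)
Your proposal is correct and follows essentially the same route as the paper's proof: identify $P_S(X)$ (for $S\supseteq T$) with a disjoint union of full simplices indexed by $\pi_0(X)$, use discontinuity to produce a $G$-equivariant family of barycenters of finite orbits (your $v_\beta$ are exactly the paper's $b_i$, since $Gx_i\cap X_i$ equals the orbit of $x_i$ under the setwise stabilizer of the component), deformation-retract onto this coarsely discrete subspace by the straight-line unit-speed homotopy, and conclude via homotopy invariance of the cone functor and flasqueness of the cone over a discrete space. The one inaccuracy is your claim that the induced bornology on $Z\cong\pi_0(X)$ is the finite one---it is the bornology induced from $P_S(X)$ and may be strictly larger if the bornology of $X$ exceeds the minimal one---but this is harmless, since the shift $(t,z)\mapsto(t+1,z)$ witnesses flasqueness of $\cO(Z)$ for any compatible bornology (bounded sets of the cone lie in sets of the form $[0,n]\times B$).
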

\begin{proof} Let $X$ be a $G$-bornological coarse spaces whose underlying space is semi-bounded  and   has a discontinuous $G$-action. Then every 
 invariant subset with the induced $G$-bornological coarse structure has the same properties.
So it suffices to show that
\[E(\cO(P_{S}(X)))\simeq 0\]
for all sufficiently large invariant entourages  $S$ of $X$.

By assumption we can choose an invariant entourage $T$ of $X$ such that every coarse component of $X$ is bounded by $T$. Then for every invariant entourage $S$ of $X$ with $T\subseteq S$ we have an isomorphism
\[P_{S}(X)\cong \bigsqcup_{i\in \pi_{0}(X)} \Delta^{X_{i}}\]
of quasi-metric spaces,
where $X_{i}$ is the component labelled by $i$ and $\Delta^{X_{i}}$ is the set of finitely supported probability measures on $X_{i}$.

 Let $\pi$ be a choice of a subset of $ \pi_{0}(X)$  of representatives of the orbit set $\pi_{0}(X)/G$. For every $i$ in $ \pi$ we  let $G_i$ be the stabilizer of $i$. This group acts simplicially on $X_{i}$. Then we obtain a $G$-equivariant isomorphism of $G$-quasi-metric spaces
	\[ P_S(X) \cong \coprod_{i \in \pi} G \times_{G_i} \Delta^{X_{i}}.\]
	Let $i$ be in $\pi$ and  $x_{i}$ be a point in $X_{i}$.
	The set $Gx_i \cap X_{i}$ is finite since the $G$-action on $X$ is discontinuous and $X_{i}$ belongs to the minimal bornology compatible with the coarse structure of $X$.  Hence, $Gx_i \cap X_{i}$ is the set of vertices of  a simplex $\Delta_i$ in $ \Delta^{X_{i}}$. The barycenter   $b_i$ of $\Delta_{i}$  is fixed under the action of $G_i$. 
	
	There is a unique $G$-equivariant  map
	$b:\pi_{0}(X)\to P_{S}(X)$ such that
	$b(x_{i})=b_{i}$ for all $i$ in $\pi$.
	We consider $\pi_{0}(X)$ as a discrete quasi-metric space.
The map
$b$ is a homotopy equivalence. Indeed, an inverse is given by the map $p:P_{S}(X)\to \pi_{0}(X)$ which sends every point $\mu$ in $P_{S}(X)$ to the coarse component represented by any choice of a vertex of a simplex containing $\mu$. Then $p\circ b=\id_{\pi_{0}(X)}$ and
$b\circ p$ is homotopic to $\id_{P_{S}(X)}$ by a unit-speed   homotopy.
 The cone functor sends this homotopy to a coarse homotopy.
 
 We equip $\pi_{0}(X)$ with the bornology induced from the bornology of $X$ via the map $b$.  
 It follows from the compatibility of the coarse and the bornological structure on $X$ that the projection $p$ is proper.

 We conclude that \[E(\cO(P_{S}(X)))\simeq E(\cO( \pi_{0}(X)))\ .\]
Since $\pi_{0}(X)$ is a discrete $G$-bornological  coarse space the cone
$\cO( \pi_{0}(X))$ is a flasque $G$-bornological coarse space.
 Consequently, $0\simeq E(\cO( \pi_{0}(X)))$.
 This implies \[0
 \simeq E(\cO(P_{S}(X)))\ .\qedhere\]
 \end{proof}

\begin{proof}[Proof of \cref{thm:mainFDC}] 
The theorem is an immediate consequence of \cref{cor:vanishingclosedunderdecomposition} and \cref{prop:semiboundedisvanishing}: The class of $G$-coarse spaces with $G$-FDC is by definition the smallest class of $G$-coarse spaces which is closed under decomposition and contains all semi-bounded spaces. Moreover, any decomposition of $G$-bornological space with discontinuous $G$-action yields families of $G$-bornological coarse spaces with discontinuous $G$-action.
\end{proof}

\bibliographystyle{alpha}
\bibliography{born-transbas}

\end{document}